\let\@wraptoccontribs\wraptoccontribs
\newcommand{\nc}{\newcommand}
\nc{\dmo}{\DeclareMathOperator}
\nc{\nt}{\newtheorem}
\newtheorem{thm}{Theorem}[section]
\newtheorem{cor}[thm]{Corollary}
\newtheorem{lem}[thm]{Lemma}
\newtheorem{prop}[thm]{Proposition}
\newtheorem{ques}[thm]{Question}
\newtheorem*{fact*}{Fact}
\newtheorem{fact}[thm]{Fact}
\newtheorem*{namedtheorem}{\theoremname}
\newcommand{\theoremname}{testing}
\newenvironment{named}[1]{\renewcommand{\theoremname}{#1}\begin{namedtheorem}}{\end{namedtheorem}}
\theoremstyle{definition}
\newtheorem{defn}[thm]{Definition}
\newtheorem*{conv*}{Convention}
\theoremstyle{remark}
\newtheorem{rem}[thm]{Remark}
\nc{\Z}{\mathbb{Z}} 
\nc{\R}{\mathbb{R}} 
\nc{\Q}{\mathbb{Q}} 
\nc{\N}{\mathbb{N}} 
\nc{\set}[2]{\{#1 \,|\, #2\} }
\nc{\gp}{\Gamma\mathcal{G}} 
\nc{\CX}{\mathcal{C} P(\gp)}
\nc{\G}{\Gamma}
\nc{\Star}{\mathsf{Star}}
\nc{\Link}{\mathsf{Link}}
\nc{\vol}{\mathsf{Vol}}
\nc{\eqw}{=_\mathsf{w}}
\nc{\eqe}{=_\mathsf{g}}
\nc{\Cay}{\text{Cay}}
\nc{\id}{\mathsf{id}}
\nc{\vkd}{\mathcal{D}_\mathsf{vK}} 
\nc{\dvkd}{\Delta} 
\nc{\dd}{\mathcal{D}} 
\nc{\mH}{\mathsf{m}_H} 
\newcommand{\Rfaktor}[2]{{\raisebox{.3em}{$#1$}\bigg/\raisebox{-.4em}{$#2$}}}
\newcounter{scomments}
 \newcounter{jcomments}
\newcounter{mtcomments}
 \newcounter{acomments}
 \newcounter{mccomments}
\let\@wraptoccontribs\wraptoccontribs
\title{Stable subgroups of graph products}
\author{Sahana H Balasubramanya, Marissa Chesser, Alice Kerr, \\ 
Johanna Mangahas, Marie Trin}
\begin{document}

\maketitle

\begin{abstract}
    We extend the characterization of stable subgroups of right-angled Artin groups from \cite{RAAGstable} to the case of graph products of infinite groups. Specifically, we show that the stable subgroups of such graph products are exactly the subgroups that quasi-isometrically embed in the associated contact graph. Equivalently, they are the subgroups that satisfy a condition arising from the defining graph: a stable subgroup is an almost join-free subgroup. In particular, we generalize the equivalence between stable and purely loxodromic subgroups from \cite{RAAGstable} in the case where all torsion subgroups of the vertex groups are finite, and the equivalence between stable and infinite index Morse subgroups from \cite{Tran2019} in the case where the defining graph is connected.
\end{abstract}


\tableofcontents

\section{Introduction}\label{sec:intro}

Given a simplicial graph $\Gamma$ and a family of finitely generated groups $\mathcal{G}=\{ G_v \mid v\in V(\Gamma) \}$ indexed by the vertices of $\Gamma$, the associated graph product $\gp$ is generated by the vertex groups $G_v$, with the additional relation that elements of adjacent vertex groups commute (as detailed in \Cref{sec:graphproducts}). Graph products were introduced by Green in her PhD thesis \cite{greennormalform} and have since been widely studied. They provide an interplay between free products and direct products of groups, and generalize both right-angled Artin groups (RAAGs) and right-angled Coxeter groups (RACGs). In this paper, we focus on the case where the defining graph $\Gamma$ has no isolated vertices, and where the groups $G_v$ are all finitely generated and infinite; this situation generalizes the one of RAAGs.

The goal of this paper is to characterize the \emph{stable} subgroups of such graph products.  A finitely generated subgroup $H$ of a finitely generated group $G$ is stable if its inclusion in $G$ is a quasi-isometric embedding and quasi-geodesics from $G$ with common endpoints in $H$ fellow-travel (see \Cref{def:stable} for details). In particular, they are hyperbolic subgroups, where the hyperbolicity is inherited from the ambient geometry of $G$.

This notion of stability was introduced in \cite{DT} to generalize both the quasi-convex subgroups of hyperbolic groups, and the convex cocompact subgroups of mapping class groups \cite{FarbMosher}. They have since been well studied for a broad collection of groups; see for example \cite{sistoHypEmbed,CordesDurham,AMST,HKim,abbott2022acylindrically}. In particular, \cite{RAAGstable} proved a geometric characterization of the stable subgroups of RAAGs.

\begin{thm} \emph{\cite[Theo 1.1]{RAAGstable}}
\label{thm:StableInRAAG}
Let $\Gamma$ be a finite connected graph and $H$ a finitely generated subgroup of the right angled Artin group $A(\Gamma)$. The following are equivalent:
\begin{enumerate}
    \item\label{thm-part:RAAG-stable} $H$ is stable in $A(\Gamma)$.
    \item\label{thm-part:RAAG-qi} The orbit maps of $H$ into the extension graph are quasi-isometric embeddings.
    \item\label{thm-part:RAAG-ploxo} $H$ is purely loxodromic for the action on the extension graph.
\end{enumerate}
\end{thm}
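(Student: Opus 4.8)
The plan is to prove the cycle of implications $(1)\Rightarrow(3)\Rightarrow(2)\Rightarrow(1)$, leaning on three external inputs that I would cite rather than reprove: that the extension graph $\Gamma^e$ is a quasi-tree, hence Gromov hyperbolic, with $A(\Gamma)$ acting acylindrically on it (Kim and Koberda); that an infinite-order element of $A(\Gamma)$ acts loxodromically on $\Gamma^e$ if and only if its cyclic subgroup is Morse in $A(\Gamma)$ (Behrstock--Charney and Koberda); and the general principle that a finitely generated subgroup is stable precisely when it is undistorted with Morse quasi-geodesics, i.e.\ a Morse subgroup (Durham--Taylor \cite{DT}, Cordes--Durham \cite{CordesDurham}). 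Throughout I would use that $A(\Gamma)$ is torsion-free, so every nontrivial element of $H$ has infinite order.

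\textbf{$(1)\Rightarrow(3)$.} This is the soft direction. If $H$ is stable then every cyclic subgroup $\langle h\rangle$ with $h\neq 1$ is again stable, hence $h$ is a Morse element; by the loxodromic-equals-Morse characterization above, $h$ acts loxodromically on $\Gamma^e$. As all nontrivial elements of $H$ are infinite order, $H$ is purely loxodromic. The easy reverse half $(2)\Rightarrow(3)$ also follows immediately, since a quasi-isometrically embedded copy of $\Z=\langle h\rangle$ must translate along a geodesic of the hyperbolic space $\Gamma^e$.

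\textbf{$(3)\Rightarrow(2)$.} This is the crux, and where I expect the real work. Pure loxodromicity is an \emph{elementwise} hypothesis, and I must promote it to a \emph{uniform} geometric statement: that the orbit map $H\to\Gamma^e$ is a quasi-isometric embedding. I would argue by contradiction. If the orbit map is not a quasi-isometric embedding, there is a sequence $h_n\in H$ whose orbit displacement in $\Gamma^e$ grows sublinearly in the word length $|h_n|_H$. Using the acylindricity of the action together with the quasi-tree structure of $\Gamma^e$---concretely, via a Bestvina--Bromberg--Fujiwara projection-complex and bounded-geodesic-image analysis of how the $h_n$ fold along $\Gamma^e$---I would extract from this sequence a nontrivial limiting element of $H$ acting with vanishing translation length, i.e.\ non-loxodromically, contradicting the hypothesis. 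The delicate point is making the limiting argument produce an honest element of the finitely generated group $H$ rather than merely a coarse pattern; acylindricity is exactly what rules out ``accidental'' parabolic or elliptic escape and forces the contradiction.

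\textbf{$(2)\Rightarrow(1)$.} Here I would transfer hyperbolic stability from $\Gamma^e$ back to $A(\Gamma)$. First, undistortion: the orbit map $A(\Gamma)\to\Gamma^e$ is coarsely Lipschitz, so a quasi-isometric embedding of $H$ into $\Gamma^e$ forces $|h|_H\lesssim |h|_{A(\Gamma)}$, which with the trivial reverse bound shows $H$ is undistorted. Next, Morse-ness: a quasi-isometrically embedded subset of the hyperbolic space $\Gamma^e$ is stable there, so it suffices to show that any quasi-geodesic of $A(\Gamma)$ with endpoints in $H$ maps to an unparametrized quasi-geodesic of $\Gamma^e$; this is a bounded-geodesic-image statement relating $\mathrm{Cay}(A(\Gamma))$ to $\Gamma^e$, and it is the RAAG-specific ingredient I would have to verify. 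Granting it, fellow-traveling in $\Gamma^e$ pulls back to fellow-traveling in $A(\Gamma)$, so $H$ is Morse and hence stable by the Cordes--Durham characterization, closing the cycle.
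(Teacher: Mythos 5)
First, a point of orientation: the paper does not actually prove \Cref{thm:StableInRAAG} --- it is quoted from \cite{RAAGstable} --- so the relevant comparison is with that proof and with the paper's own generalization (\Cref{thm-intro:recognizing-stable}), whose proof deliberately mirrors it. Your cycle $(1)\Rightarrow(3)\Rightarrow(2)\Rightarrow(1)$ has the right architecture, and two of the three legs are essentially sound: $(1)\Rightarrow(3)$ is the soft direction you describe (and also follows from the direct product obstruction as in \Cref{prop:InfJoinNotStable} plus the loxodromic-iff-not-conjugate-into-a-join characterization of \cite{KimKoberda2}), while $(2)\Rightarrow(1)$ is most cleanly closed not by verifying an unparametrized-quasi-geodesic statement by hand but by invoking the A/QI-triple theorem of \cite{abbott2022acylindrically} together with acylindricity of the action, which is exactly what the paper does in \Cref{prop:qi-ilplies-stable}.

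The genuine gap is in $(3)\Rightarrow(2)$, which you correctly flag as the crux but then do not prove. The proposed compactness argument --- extract, from a sequence $h_n\in H$ with sublinearly growing displacement, a nontrivial element of $H$ with vanishing translation length, using only acylindricity and the quasi-tree structure --- cannot work at that level of generality: \Cref{sec:purely-loxodromic} of this paper exhibits finitely generated subgroups of graph products (vertex groups with infinite torsion) that are purely loxodromic for an acylindrical action on a quasi-tree (\Cref{thm:valiunas}) yet have \emph{bounded} orbits, so every hypothesis your limiting argument uses is satisfied while its conclusion fails; and even in torsion-free settings the same soft strategy would bear on the open question of whether purely pseudo-Anosov subgroups of mapping class groups are convex cocompact. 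The implication therefore requires RAAG-specific combinatorial input. The actual route, in \cite{RAAGstable} and reproduced for graph products here, is: convert pure loxodromicity into the algebraic statement that $H$ is join-free \cite{KimKoberda2}; prove by a compactness argument on disk diagrams (bounded combination diameter, bounded non-contribution, combing) that a finitely generated join-free subgroup is $N$--join-busting (\Cref{thm:join-busting}), i.e.\ geodesic representatives of its elements contain no long join subwords; and deduce from join-busting both undistortion and the quasi-isometric embedding into the extension/contact graph via the star metric (\Cref{thm:qie-star graph} and \Cref{lem:Contact=star}). None of that content appears in your sketch, and it is where the theorem lives.
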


In this theorem stability is characterized by the action of the subgroup on the extension graph of $A(\Gamma)$. This space was introduced by Kim and Koberda in \cite{KimKoberda}, and is a hyperbolic graph on which $A(\Gamma)$ acts by isometries \cite{KimKoberda2}. In the language of \cite{BCKMT}, the equivalence of (\ref{thm-part:RAAG-stable}) and (\ref{thm-part:RAAG-qi}) in \Cref{thm:StableInRAAG} implies that the extension graph is a \emph{universal recognizing space} for the stable subgroups of $A(\Gamma)$. A universal recognizing space for a group $G$ is a hyperbolic space on which $G$ acts by isometries, and in which all the stable subgroups embed quasi-isometrically through their orbit maps for this action. With the additional (converse) assumption that every subgroup quasi-isometrically embedded through its orbit maps is stable, this has also been called a \textit{stability recognizing space} \cite{CDEK}.

A universal recognizing space for a group can be expected to play a role analogous to the one of the curve complex for mapping class groups. In fact, \Cref{thm:StableInRAAG} is also true for mapping class groups when the extension graph is replaced by the curve complex and with the additional assumption in condition (\ref{thm-part:RAAG-ploxo}) that the group $H$ is undistorted \cite{MR2465691,Hamenstaedt2005WordHE,BBKL}. More generally, conditions (\ref{thm-part:RAAG-stable}) and (\ref{thm-part:RAAG-qi}) are equivalent for hierarchically hyperbolic groups acting on the hyperbolic space associated to the maximal domain of some specific structure on the group \cite{ABD}, as well as for CAT(0) groups acting on the curtain model \cite{CAT0}, Morse-dichotomous groups acting on the contraction space \cite{Zbinden,PetytZalloum}, and certain relatively hyperbolic groups acting on the coned-off Cayley graph \cite{ADT}.

For graph products the analog of the curve graph that we are interested in is the \emph{contact graph of the prism-complex}; one can refer to \cite{Oyakawa} or \cite{genevois2017} for other possibilities. Our main theorem is as follows.

\begin{thm}
\label{thm-intro:recognizing-stable}
    Let $\Gamma$ be a finite simple graph with no isolated vertices, $\mathcal{G}=\set{G_v}{v\in V(\Gamma)}$ be a collection of finitely generated infinite groups, and $P(\gp )$ be the prism complex of the graph product $\gp$. Let $H$ be a finitely generated subgroup of the graph product $\gp$. Then the following are equivalent.
    \begin{enumerate}
        \item\label{thm-part-intro:recognizing-stable-stable} $H$ is stable in $\gp$. 
        \item\label{thm-part-intro:recognizing-stable-qi} The orbit maps of $H$ into the contact graph $\CX$ are quasi-isometric embeddings.
        \item\label{thm-part-intro:recognizing-stable-ploxo} $H$ is almost join-free.
    \end{enumerate}
\end{thm}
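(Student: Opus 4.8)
The plan is to split the three-way equivalence into two parts: the equivalence of (1) and (2), which I would derive from the hierarchically hyperbolic structure on $\gp$ together with the general theory of stable subgroups of hierarchically hyperbolic groups (HHGs), and the equivalence of (2) and (3), which is the genuinely new combinatorial input and which I expect to carry the technical weight. The hypothesis that $\Gamma$ has no isolated vertices will be used throughout to ensure that join-avoidance is the correct condition.

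For (1)$\iff$(2), the first step is to record that $\gp$ is a hierarchically hyperbolic group whose $\sqsubseteq$-maximal domain $S$ has associated hyperbolic space $\mathcal{C}S$ equivariantly quasi-isometric to the contact graph $\CX$; the remaining domains are the cosets of the parabolic subgroups $\Gamma_\Lambda\mathcal{G}$ indexed by induced subgraphs $\Lambda \subsetneq \Gamma$, with the vertex groups sitting at the bottom of the hierarchy. Granting this, the equivalence of stability with the orbit map to $\mathcal{C}S$ being a quasi-isometric embedding is exactly the characterization of stable subgroups of HHGs in terms of the top-level space (as in \cite{ABD}): a finitely generated subgroup is stable if and only if its orbit map to the hyperbolic space of the maximal domain is a quasi-isometric embedding. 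Transporting this statement across the quasi-isometry $\mathcal{C}S \simeq \CX$ gives (1)$\iff$(2). The main thing to verify here is that the HHG structure on graph products of infinite groups really does have the contact graph as its top-level space and satisfies the hypotheses needed to invoke the HHG stability criterion; this is a matter of assembling the known structural results for graph products.

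For (2)$\Rightarrow$(3) I would argue by contraposition. If $H$ is not almost join-free, then by definition $H$ has arbitrarily long intersections (in the word metric of $\gp$) with cosets $g\Gamma_J\mathcal{G}$ of join subgroups, where $\Gamma_J = \Gamma_1 * \Gamma_2$ is a nontrivial join. Because such a coset is a product region, its image in $\CX$ is coarsely bounded (product domains collapse in the top-level hyperbolic space). Hence $H$ contains sequences of elements whose word length grows but whose $\CX$-distance to the identity stays bounded, so the orbit map to $\CX$ cannot be a quasi-isometric embedding. Here the no-isolated-vertices hypothesis is what makes join-freeness the right notion: for every vertex $v$ the star $\Star(v) = \{v\} * \Link(v)$ is itself a join, so each vertex-group coset $gG_v$ lies inside a coset of a join subgroup, and controlling joins automatically controls all the coarsely bounded directions of $\CX$, including the infinite vertex groups themselves.

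The hard part, and the crux of the argument, is (3)$\Rightarrow$(2): showing that an almost join-free subgroup quasi-isometrically embeds in the contact graph. The projection to $\CX$ is always coarsely Lipschitz, so the work is to produce the reverse inequality $d_{\gp}(1,h) \lesssim d_{\CX}$ of the corresponding orbit points for $h \in H$. The natural tool is a distance formula expressing $d_{\gp}(1,h) \asymp \sum_W [d_{\mathcal{C}W}]_K$ as a threshold sum over all domains $W$, with the top-level term being the $\CX$-distance. I would then show that the almost join-free hypothesis uniformly bounds every non-top contribution: each proper parabolic coset is contained in a coset of some join subgroup (again using that $\Gamma$ has no isolated vertices, so every proper induced subgraph sits inside a join), whence bounded intersection of $H$ with join cosets caps those terms. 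With all lower terms bounded, the top-level term must account for a definite proportion of $d_{\gp}(1,h)$, yielding the quasi-isometric embedding. The main obstacle I anticipate is making the bound on the non-top contributions uniform and simultaneous across all domains at once — translating the per-coset condition ``almost join-free'' into a single global bound in the distance formula — and handling the infinite, possibly non-hyperbolic, vertex groups, whose internal geometry must be shown to be invisible to $\CX$ while being correctly absorbed by the join condition.
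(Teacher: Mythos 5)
There is a genuine gap at the foundation of your plan: under the theorem's hypotheses the vertex groups are \emph{arbitrary} finitely generated infinite groups, and such a graph product need not be a hierarchically hyperbolic group. A vertex group embeds (gate-)convexly in $\gp$, so if $\gp$ were an HHG each $G_v$ would be an HHS, hence coarse median with at most quadratic Dehn function; but the hypotheses allow vertex groups with, say, exponential Dehn function, or f.g.\ infinite torsion groups. So both pillars of your argument --- the ABD characterization of stable subgroups via the maximal domain for $(1)\Leftrightarrow(2)$, and the distance formula for $(3)\Rightarrow(2)$ --- are unavailable in this generality. The paper instead gets $(2)\Rightarrow(1)$ from the acylindricity of the $\gp$-action on $\CX$ (\Cref{thm:valiunas}) together with the Abbott--Manning A/QI-triple criterion (\Cref{prop:qi-ilplies-stable}); it gets $(1)\Rightarrow(3)$ by exhibiting non-fellow-traveling quasi-geodesics inside an isometrically embedded direct product of infinite groups (\Cref{prop:InfJoinNotStable}); and it replaces the distance formula by a disk-diagram argument (\Cref{thm:join-busting} and \Cref{sec:almostjoinfree->qi}) showing that almost join-free forces uniformly bounded join subwords of prism geodesics, whence $H$ is undistorted in the star metric, which is $\gp$-equivariantly quasi-isometric to $\CX$ (\Cref{lem:Contact=star}).

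Two further problems would persist even if a distance formula were available. First, your claim that every proper induced subgraph of $\G$ sits inside a join is false: in the path $a$--$b$--$c$--$d$ the pair $\{a,d\}$ spans no edge and is contained in no join subgraph, so the domain corresponding to $G_a\ast G_d$ is not controlled by the almost join-free hypothesis in the way you propose. Second, ``finite intersection with each join coset'' is a per-coset statement about intersections, whereas the distance formula needs a \emph{uniform} bound on projections/diameters across all cosets simultaneously; you flag this, but it is exactly the content of the paper's join-busting theorem and requires the quantitative disk-diagram lemmas of \Cref{sec:star-free} (bounded combination diameter, bounded non-contribution, bounded post-concatenation combination), not merely bookkeeping. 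Your $(2)\Rightarrow(3)$ direction, by contrast, is sound and matches \Cref{prop:join-implies-elliptic}: a join coset has bounded image in $\CX$, so an infinite intersection kills the quasi-isometric embedding.
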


The contact graph $\CX$ is known to be a quasi-tree, and hence a hyperbolic space \cite[Corollary C]{valiunas2021graphproducts}.  It is therefore a universal recognizing space for $\gp$ under the conditions of \Cref{thm-intro:recognizing-stable}.

\begin{rem}
    The equivalence of (1) and (2) in \Cref{thm-intro:recognizing-stable} has also been proved independently (and simultaneously) by Joshua Perlmutter using different methods \cite[Corollary 5.4 and Proposition 5.5]{Perlmutter}, namely via a broader result for relatively hierarchically hyperbolic spaces \cite[Theorem 1.3]{Perlmutter}.
\end{rem}

We can relate the final equivalent condition in \Cref{thm-intro:recognizing-stable} to the last condition in Theorem \ref{thm:StableInRAAG}. A subgroup of a graph product is called a \emph{join subgroup} if it decomposes as a direct product of subgroups generated by disjoint subgraphs of the defining graph. \Cref{thm-intro:recognizing-stable} says the stable subgroups are exactly the \emph{almost join-free subgroups}, meaning those admitting only finite intersections with conjugates of join subgroups. (Note, our definition of almost join-free subgroups differs slightly from the one given by Tran in \cite{Tran}). In the case of RAAGs, the purely loxodromic condition is known to be equivalent to asking $H$ to be (almost) join-free \cite{KimKoberda2}, and this equivalence is a key part of the proof of \Cref{thm:StableInRAAG}.  This characterization cannot extend to general graph products due to the potential existence of infinite torsion subgroups, which are automatically purely loxodromic, but never stable (\Cref{sec:purely-loxodromic} details why stable implies purely loxodromic, but not the other way around). Under the added assumption that such subgroups do not exist, we recover the final equivalent statement of \Cref{thm:StableInRAAG}.

\begin{thm}\label{introthm:main-torsion-free}
    Let $\Gamma$ be a finite simple graph with no isolated vertices, $\mathcal{G}=\set{G_v}{v\in V(\Gamma)}$ be a collection of infinite finitely generated groups with no infinite torsion subgroups, and $P(\gp )$ be the prism complex of the graph product $\gp$. A finitely generated subgroup $H$ of $\gp$ is stable if and only if it is purely loxodromic with respect to the action on the contact graph $\CX$.
\end{thm}

In the non-hyperbolic cases of both $A(\Gamma)$ for connected $\Gamma$ and mapping class groups, there is another characterization of stable subgroups: they are exactly the infinite-index Morse subgroups \cite{Tran2019,HKim}. This is not the case for RACGs \cite[Theorem 1.11 and Corollary 1.12]{Tran2019}, however it does hold for graph products of infinite groups, when the defining graph is connected.

\begin{thm}\label{thm-intro:infindex}
    Let $\Gamma$ be a finite connected simple graph with at least two vertices, and $\mathcal{G}=\set{G_v}{v\in V(\Gamma)}$ be a collection of finitely generated infinite groups. A subgroup $H$ of the graph product $\gp$ is stable if and only if it is infinite index and Morse.
\end{thm}

The hypotheses on the vertex groups and on the graphs in the above theorems are necessary. For \Cref{thm-intro:infindex}, if $\G$ is not connected, then $\gp$ is a free product of infinite groups, with each of these free factors being both infinite index and Morse. At the same time, those free factors will be stable if and only if they are hyperbolic. \Cref{thm-intro:recognizing-stable} and \Cref{introthm:main-torsion-free} assume a slightly weaker hypothesis, namely that there are no isolated vertices in $\Gamma$.  If $\Gamma$ contains an isolated vertex, then the associated vertex group will always be almost join-free, while its orbit in the contact graph will have finite diameter. This vertex group is also a free factor, and therefore Morse, so as before it will be stable if and only if it is hyperbolic. Moreover, it was shown in \cite[Corollary D]{valiunas2021graphproducts} that, under the conditions of \Cref{thm-intro:recognizing-stable}, the action of $\gp$ on the contact graph is the largest acylindrical action. When isolated vertices are allowed, the authors showed in \cite{BCKMT} that the largest acylindrical action of such a graph product does not necessarily provide a universal recognizing space for stable subgroups.

The other important hypothesis in the above theorems is that the vertex groups are infinite. Otherwise, we could consider the case where $\G$ is a path with two vertices, and the vertex groups are $\Z$ and $\Z_2$. Here $\gp$ is an infinite join group (so not almost join-free), while being stable and finite index within itself, and with $\CX$ having finite diameter. Note that this hypothesis on vertex groups excludes RACGs from our theorem; in that case an analogous version of the curve complex is given by the the graph of maximal product regions introduced by Oh \cite{Oh}, and under some hypotheses Cashen, Dani, Edletzberger, and Karrer prove the equivalence between (\ref{thm-part-intro:recognizing-stable-stable}) and (\ref{thm-part-intro:recognizing-stable-qi}) for this graph \cite[Cor 6.10]{CDEK}. As RACGs are hierarchically hyperbolic groups, the work in \cite{ABD} also applies. It is therefore natural to ask if a more general result exists for graph products.

\begin{ques}
    Does a characterization of stable subgroups similar to \Cref{thm-intro:recognizing-stable} hold when the vertex groups are allowed to be finite?
\end{ques}

Our final observation concerns what stability implies about subgroup type.  Stable subgroups of RAAGs are free, by \Cref{thm:StableInRAAG} and \cite[Prop 8.1, 8.2]{RAAGstable}. For graph products, we get the following generalization as a direct consequence of \Cref{thm-intro:recognizing-stable}, the contact graph being a quasi-tree, and the fact that groups which quasi-isometrically embed into quasi-trees are virtually free (see for example \cite[Corollary 10.7]{Button2023}).

\begin{cor} \label{cor:virtually free}
    Let $\Gamma$ be a finite simple graph with no isolated vertices, and $\mathcal{G}=\set{G_v}{v\in V(\Gamma)}$ be a collection of finitely generated infinite groups. The stable subgroups of the graph product $\gp$ are virtually free.
\end{cor}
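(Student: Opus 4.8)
The plan is to derive this directly from \Cref{thm-intro:recognizing-stable} together with the known geometry of the contact graph, since both required ingredients are already available. Let $H \leq \gp$ be a stable subgroup. By the definition of stability, $H$ is finitely generated. Applying the equivalence of parts (\ref{thm-part-intro:recognizing-stable-stable}) and (\ref{thm-part-intro:recognizing-stable-qi}) of \Cref{thm-intro:recognizing-stable}, the orbit map $H \to \CX$ (fix a basepoint $x_0 \in \CX$ and send $h \mapsto h \cdot x_0$) is a quasi-isometric embedding. Equipping $H$ with the word metric from a finite generating set, this says precisely that $H$ quasi-isometrically embeds into the contact graph $\CX$.

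Next I would recall that $\CX$ is a quasi-tree by \cite[Corollary C]{valiunas2021graphproducts}, and is in particular hyperbolic. Thus $H$ is a finitely generated group admitting a quasi-isometric embedding into a quasi-tree. The final step is to invoke the structural result that any such group is virtually free (see for example \cite[Corollary 10.7]{Button2023}), which yields the conclusion.

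There is no substantial obstacle here, as the statement is a formal consequence of results already established; the mathematical content lives entirely in \Cref{thm-intro:recognizing-stable} and in the quasi-tree property of $\CX$. The only points requiring care are bookkeeping: that stability supplies finite generation of $H$ (so that its Cayley graph is well-defined up to quasi-isometry), and that the cited virtually-free criterion applies to a quasi-isometric \emph{embedding} into a quasi-tree, rather than merely to groups quasi-isometric to a quasi-tree. Both are immediate from the definitions and the cited statements.
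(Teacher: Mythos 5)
Your argument is exactly the paper's: the authors also deduce the corollary by combining \Cref{thm-intro:recognizing-stable} (stable $\Rightarrow$ the orbit map to $\CX$ is a quasi-isometric embedding), the fact that $\CX$ is a quasi-tree \cite[Corollary C]{valiunas2021graphproducts}, and the result that finitely generated groups quasi-isometrically embedding into quasi-trees are virtually free \cite[Corollary 10.7]{Button2023}. The proposal is correct and takes essentially the same approach.
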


\noindent\textbf{Organization of the paper.} \Cref{sec:background} introduces the concepts and notation we use throughout the paper. \Cref{sec:DD} is dedicated to the study of disk diagrams, which are our main technical tool. Their study serves in the proof of (\ref{thm-part-intro:recognizing-stable-ploxo})$\Longrightarrow$(\ref{thm-part-intro:recognizing-stable-qi}) of \Cref{thm-intro:recognizing-stable}. \Cref{sec:main_proof} gives the proof of \Cref{thm-intro:recognizing-stable}; the diagram below describes how this theorem is proved for the convenience of the reader. Finally, in \Cref{sec:purely-loxodromic} and \Cref{sec:morse}, we study the link between stable, purely loxodromic, and Morse subgroups of graph products.

\[
\begin{tikzcd}[column sep=5cm, row sep=large]
1) H \text{ is stable in } \gp
  \arrow[dd, Rightarrow, "\Cref{prop:InfJoinNotStable}"] 
& \parbox{10cm}{2) The orbit of $H$ into the\\ contact graph $\CX$ is a\\ quasi-isometric embedding} 
\arrow[l, Rightarrow, "\Cref{prop:qi-ilplies-stable}"]
\\
\\
3) H \text{ is almost join-free} 
  \arrow[r, Rightarrow, "\Cref{prop:quasi-isometric embedding}", shift left=1.2ex] 
  \arrow[r, Rightarrow, "\Cref{thm:join-busting}"', shift right=1.2ex] 
& \parbox{10cm}{$(H, d_{H})$ is q.i.e into $(\gp, d_{p})$ \\ $H$ is $N$ join-busting}
  \arrow[uu, Rightarrow, "\Cref{thm:ploxo-impies-qi}", shift left=12ex]
\end{tikzcd}
\]
\noindent\textbf{Acknowledgments.} The authors are grateful to the organizers of the 2023 Women in Groups, Geometry, and Dynamics program, where our collaboration started. We would like to thank Anthony Genevois for his feedback, both at the beginning of this project and on a later draft, and for his enlightening comments on \cite{Genevois2019}. Our thanks also to Chris Cashen and Annette Karrer for their explanation of the case of right-angled Coxeter groups.

This work was supported by funding from the EPSRC grant EP/V027360/1 ``Coarse geometry of groups and spaces'' via the third author, the Simons Foundation (965204, JM) via the fourth author, and MPI MiS via the third and fifth authors.

\section{Preliminaries}\label{sec:background}
 

\subsection{Coarse geometry}

Two metric spaces $(X,d_X)$ and $(Y,d_Y)$ are said to be \emph{quasi-isometric} if there is a map $f : X \to Y$ and two real numbers $\lambda \geq 1$ and $c\geq 0$ such that 
\begin{itemize}
   \item$\forall x,x'\in X \quad \frac{1}{\lambda}d_X(x,x')-c \leq d_Y(f(x),f(x'))  \leq  \lambda d_X(x,x')+c$
   \item$\forall y\in Y, \exists x\in X \quad d_Y(y,f(x))\leq c$.
\end{itemize}
The map $f$ is called a \emph{$(\lambda,c)$-quasi-isometry}; the first condition alone makes the map $f$ a \emph{$(\lambda,c)$--quasi-isometric embedding}. A \emph{quasi-isometry} (resp. \emph{quasi-isometric embedding}) is a map which is a $(\lambda,c)$--quasi-isometry (resp. quasi-isometric embedding) for some pair $(\lambda,c)$. A quasi-isometric embedding from an interval of $\R$ or $\Z$ is called a \emph{quasi-geodesic}, which is a \emph{geodesic} when $\lambda=1$ and $c=0$.

\subsection{Stable subgroups}

\label{def:stable}

Let $G$ be a finitely generated group. A finitely generated subgroup $H \leq G$ is said to be \emph{stable} if both of the following conditions hold:\begin{itemize}
    \item $H$ is undistorted in $G$, i.e.~the inclusion $H\hookrightarrow G$ is a quasi-isometric embedding. 
    \item For each choice of constants $\lambda\geq 1, c\geq0$, there is a constant $M \geqslant0$ (depending on $\lambda, c$) such that any two $(\lambda,c)$--quasi-geodesics in $G$ with the same end points in $H$ are each contained in the $M$--neighborhood of the other.
\end{itemize} 

Note that stability is independent of the choice of finite generating sets for $H$ and $G$ \cite{DT}.  In particular, it is preserved under group automorphisms such as conjugation.  We note this explicitly because much of the work in this paper is dedicated to the study of condition (3) in \Cref{thm-intro:recognizing-stable} which deals with conjugacy. Namely, we have:

\begin{lem} \label{lem:ConjStableIsStable} Let $H$ be a stable subgroup of a finitely generated group $G$. For any $g\in G$ the subgroup $g Hg^{-1}$ is also a stable subgroup of $G$.
\end{lem}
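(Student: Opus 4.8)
The plan is to exploit the fact that conjugation by $g$ is a bijective quasi-isometry of $G$ carrying $H$ onto $gHg^{-1}$, together with the observation that both defining conditions of stability are preserved under such a map. Concretely, let $\phi_g \colon G \to G$ denote the automorphism $x \mapsto gxg^{-1}$. Writing the word metric as $d(x,y)=|x^{-1}y|$, left multiplication by $g$ is an isometry while right multiplication by $g^{-1}$ changes distances by at most $2|g|$, since $|x^{-1}y|-2|g| \le |g x^{-1} y g^{-1}| \le |x^{-1}y|+2|g|$. Hence $\phi_g$ is a $(1,2|g|)$--quasi-isometry of $(G,d)$ whose inverse $\phi_{g^{-1}}$ is again a $(1,2|g|)$--quasi-isometry. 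Moreover $\phi_g$ restricts to a group isomorphism $H \to gHg^{-1}$, which is automatically a (bi-Lipschitz) quasi-isometry for the respective word metrics.

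First I would verify undistortion. The inclusion $H \hookrightarrow G$ is a quasi-isometric embedding by stability, and post-composing it with the quasi-isometry $\phi_g$ produces a quasi-isometric embedding $H \to G$ whose image is exactly $gHg^{-1}$. Pre-composing with the inverse isomorphism $gHg^{-1} \to H$, itself a quasi-isometry of word metrics, then shows that the inclusion $gHg^{-1} \hookrightarrow G$ is a quasi-isometric embedding, so $gHg^{-1}$ is undistorted. Next I would transport the fellow-traveling condition: given $\lambda \ge 1$, $c \ge 0$ and two $(\lambda,c)$--quasi-geodesics $\gamma_1,\gamma_2$ in $G$ with common endpoints in $gHg^{-1}$, applying $\phi_{g^{-1}}$ yields two $(\lambda',c')$--quasi-geodesics, with $\lambda',c'$ depending only on $\lambda,c,|g|$, whose common endpoints lie in $\phi_{g^{-1}}(gHg^{-1})=H$. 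Stability of $H$ supplies a constant $M'=M'(\lambda',c')$ witnessing that these images fellow-travel; applying $\phi_g$ back and using that it distorts distances by at most $2|g|$ shows that $\gamma_1,\gamma_2$ fellow-travel at distance $M:=M'+2|g|$, a constant depending only on $\lambda$ and $c$. This establishes both conditions and hence the stability of $gHg^{-1}$.

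The only subtlety, and it is a mild one, is bookkeeping the constants: one must check that $\phi_g$ and $\phi_{g^{-1}}$ send $(\lambda,c)$--quasi-geodesics to quasi-geodesics with controlled parameters, and that a neighborhood containment is preserved up to the additive error $2|g|$ under an additive-error quasi-isometry. Since $g$ is fixed, $|g|$ is a genuine constant and all dependencies collapse as required, so no real obstacle arises. The essential content of the lemma is precisely the remark recorded above that stability is an intrinsic, quasi-isometry-invariant property of the pair $(H,G)$, and conjugation realizes $(gHg^{-1},G)$ as a quasi-isometric copy of $(H,G)$.
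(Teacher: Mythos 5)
Your proof is correct and follows essentially the same route as the paper, which states this lemma without a separate proof, justifying it by the preceding remark that stability is independent of the choice of finite generating sets and hence preserved under automorphisms such as conjugation. Your explicit bookkeeping — conjugation as a $(1,2|g|)$--quasi-isometry of $G$ restricting to an isomorphism $H\to gHg^{-1}$, transporting both the undistortion and fellow-traveling conditions — is a correct unpacking of exactly that observation.
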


\subsection{Graph products}\label{sec:graphproducts} 

Let $\Gamma = (V(\Gamma),E(\Gamma))$ be a graph, where the set $V(\Gamma)$ is the set of vertices of the graph and the set $E(\Gamma) \subseteq V(\Gamma) \times V(\Gamma)$ is the set of (non-oriented) edges of the graph. Let $\mathcal{G}= \{ G_v | v\in V(\Gamma) \}$ be a family of groups called \emph{vertex groups}. In this paper we will always assume that $\G$ is a finite simple graph and that the vertex groups are all non-trivial. The \emph{graph product} $\gp$ associated to the pair $(\Gamma,\mathcal{G})$ is defined by 
\begin{equation*}\label{graph product}
\gp := \Rfaktor{\bigg( \underset{v\in V(\G)}{\bigast}G_v\bigg)}{\big\langle\hspace{-1mm} \big\langle \thinspace [g,h]\ \mid \ g\in G_v, h\in G_u \text{ with } \{u,v\}\in E(\Gamma)\big\rangle\hspace{-1mm} \big\rangle}.
\end{equation*}
Notice that if all the vertex groups are isomorphic to $\mathbb{Z}$ then $\gp$ is a right-angled Artin group, and if the vertex groups are all isomorphic $\mathbb{Z}_2$, then $\gp$ is a right-angled Coxeter group.

For every (induced) subgraph $\G'$ of $\G$, the subgroup $G_{\G'}=\langle G_v\ |\ v\in V(\G')\rangle$ is known as a \emph{parabolic subgroup} of $\gp$. For ease of notation, we will typically not distinguish between the vertex set $V(\G')$ and the graph $\G'$.

\subsubsection{Join, star, and link subgroups}\label{subsubsec:join-star-link-subs}
Recall that a graph $\G$ is said to be a \emph{join} of the nonempty subgraphs $\G_1$ and $\G_2$ if $V(\G)=V(\G_1)\sqcup V(\G_2)$, and for every $u\in V(\G_1)$, $v\in V(\G_2)$, we have that $\{u,v\}\in E(\G)$. For any graph product associated to $\G$, this implies that $\gp=G_{\G_1}\times G_{\G_2}$. We will refer to any subgraph $\G'$ of $\G$ that splits as a join as a \emph{join subgraph} of $\G$, and the corresponding parabolic subgroup $G_{\G'}$ as a \emph{join subgroup} of $\gp$. Note that when talking about subgraphs we will always mean induced subgraphs.

For a graph $\G$, recall that the \emph{link} of the vertex $v\in V(\G)$, denoted $\Link(v)$, is the set of vertices adjacent to $v$, and the \emph{star} of $v$ is $v\cup\Link(v)$, which we denote by $\Star(v)$. We will refer to subgraphs induced by such sets of vertices as \emph{link subgraphs} and \emph{star subgraphs} of $\G$ respectively, and the corresponding parabolic subgroups $G_{\Link(v)}, G_{\Star(v)}$ as \emph{link subgroups} and \emph{star subgroups} of $\gp$.

We extend the notions of $\Link$ and $\Star$ to any subgraph $\G'$ of $\G$, so that $\Link(\G')=\bigcap_{v\in V(\G')}\Link(v)$ (that is, the vertices adjacent to every vertex in $\G'$) and $\Star(\G')= \G' \cup \Link(\G')$.  These definitions imply the following lemma.

\begin{lem}\label{lem:StarOfASubgraph}
    If $\G$ is a graph, and $\Lambda\subseteq\Lambda'\subseteq\G$ where $\Lambda'$ is a join, then $\Star(\Lambda)$ is a join.
\end{lem}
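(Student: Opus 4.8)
The plan is to produce an explicit partition of the vertex set of $\Star(\Lambda)$ into two nonempty sides that are completely joined by edges. The guiding observation is that the tautological splitting $\Star(\Lambda)=\Lambda\sqcup\Link(\Lambda)$ is already \emph{almost} a join: these two sets are disjoint because $\G$ is simple (so $v\notin\Link(v)$, whence $\Lambda\cap\Link(\Lambda)=\emptyset$), and every vertex of $\Lambda$ is joined by an edge to every vertex of $\Link(\Lambda)$ directly from the definition of $\Link(\Lambda)$ as the common neighborhood of $\Lambda$. So this would already witness a join were it not that $\Link(\Lambda)$ can be empty. I will therefore assume $\Lambda\neq\emptyset$ (which is necessary, since $\Star(\emptyset)=\G$ need not be a join) and arrange the decomposition so that both sides are visibly nonempty; it is precisely here that the hypothesis $\Lambda\subseteq\Lambda'$ with $\Lambda'$ a join does its work.

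Writing the join as $\Lambda'=\Lambda_1'\sqcup\Lambda_2'$ with each factor nonempty and every vertex of $\Lambda_1'$ adjacent to every vertex of $\Lambda_2'$, I would set $\Lambda_i=\Lambda\cap\Lambda_i'$ and, relabeling if necessary, assume $\Lambda_1\neq\emptyset$ (possible since the nonempty set $\Lambda$ lies in $\Lambda_1'\cup\Lambda_2'$). The decomposition I propose is $A=\Lambda_1$ and $B=\Lambda_2\cup\Link(\Lambda)$. These are disjoint and cover $\Star(\Lambda)=\Lambda_1\cup\Lambda_2\cup\Link(\Lambda)$, and the join condition is uniform: a vertex $u\in\Lambda_1\subseteq\Lambda_1'$ is adjacent to each $v\in\Lambda_2\subseteq\Lambda_2'$ because $\Lambda'$ is a join, and to each $v\in\Link(\Lambda)$ because such a $v$ is adjacent to all of $\Lambda\supseteq\Lambda_1$.

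It remains to see that $B$ is nonempty, and this is the only point where a genuine dichotomy appears. If $\Lambda_2\neq\emptyset$ there is nothing to prove. Otherwise $\Lambda\subseteq\Lambda_1'$, and then every vertex of the nonempty factor $\Lambda_2'$ is adjacent to all of $\Lambda_1'\supseteq\Lambda$, so $\Lambda_2'\subseteq\Link(\Lambda)\subseteq B$ and again $B\neq\emptyset$. Together with $A=\Lambda_1\neq\emptyset$, this shows $\Star(\Lambda)=A\sqcup B$ is a join.

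The main obstacle is conceptual rather than computational: one must recognize that the naive splitting $\Lambda\sqcup\Link(\Lambda)$ genuinely fails in general, because $\Link(\Lambda)$ can be empty — for example when $\Lambda$ is an edge of a square, whose two endpoints have no common neighbor. The resolution is to keep the two halves $\Lambda_1,\Lambda_2$ of $\Lambda$ on opposite sides of the partition, drawing the needed adjacency from the ambient join $\Lambda'$ rather than from link vertices; once this is seen, every verification is immediate from the definitions of join and of $\Link$.
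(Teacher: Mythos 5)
Your proof is correct and follows essentially the same route as the paper's: the paper splits into the cases $\Lambda\subseteq\G_1$ (or $\G_2$), where the opposite factor lies in $\Link(\Lambda)$, versus $\Lambda$ meeting both factors, where $\Lambda$ is itself a join — exactly the dichotomy that reappears in your verification that $B\neq\emptyset$. Your version merely packages both cases into the single explicit partition $A=\Lambda_1$, $B=\Lambda_2\cup\Link(\Lambda)$, and correctly flags the implicit hypothesis $\Lambda\neq\emptyset$.
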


\begin{proof}
   Let $\Lambda'$ be the join of $\G_1$ and $\G_2$. Suppose $\Lambda\subseteq \G_1$, then $\emptyset\neq\G_2\subseteq\Link(\Lambda)$, and $\Star(\Lambda)$ is the join of $\Lambda$ with $\Link(\Lambda)$. The same holds if $\Lambda\subseteq \G_2$. Otherwise, $\Lambda$ is the join of $\G_1\cap\Lambda$ and $\G_2\cap\Lambda$, so $\Star(\Lambda)$ is a join whether or not $\Link(\Lambda)$ is nonempty.
\end{proof}

\begin{defn}
    A subgroup of a graph product is called \textit{join-free} (respectively \textit{star-free}) if it has trivial intersection with any conjugate of a join subgroup (respectively a star subgroup). A subgroup of a graph product is called \textit{almost join-free} (respectively \textit{almost star-free}) if it has finite intersection with any conjugate of a join subgroup (respectively a star subgroup).
\end{defn}

\begin{rem}
\label{rem:star-vs-join}
    In the case that the graph has no isolated vertices, every star subgroup is a join subgroup. Consequently, we also have that (almost) join-free implies (almost) star-free.
\end{rem}

\subsection{Associated graphs and complexes} \label{sec:graph-complexes}

Throughout this paper we consider several different Cayley graphs associated to $\gp$ and their corresponding metrics. For more details on these the reader can refer to \cite{genevois2017} or \cite{valiunas2021graphproducts}. We recall here the basic definitions and facts we will need.

When necessary, we will assume that the vertex groups of $\gp$ are finitely generated, and fix finite generating sets $\{S_v\}_{v\in V(\Gamma)}$ for those vertex groups. In this case the graph product $\gp$ is also finitely generated, and comes with a natural finite generating set 

\[ S := \bigcup\limits_{v\in V(\Gamma)} S_v. \label{standars generating set}  \]

When we talk about the \emph{standard Cayley graph} of (a finitely generated) $\gp$, we refer to the Cayley graph $\Cay_S(\gp)$, which is the Cayley graph with respect to this finite generating set $S$. For $g\in\gp$ we denote distance from the trivial element to $g$ in $\Cay_S(\gp)$ by $|g|_S$. By a \emph{word in $S$} we mean the trivial word, or a word written in the letters of $S$ or their inverses.

\begin{rem}
    Although we need to assume that $\gp$ is finitely generated for \Cref{thm-intro:recognizing-stable}, the majority of intermediate results in this paper do not require this assumption. In particular, it is not needed in the proof that (\ref{thm-part-intro:recognizing-stable-ploxo}) $\implies$ (\ref{thm-part-intro:recognizing-stable-qi}), which is what the majority of this paper works towards.
\end{rem}

\subsubsection{Prism Cayley graph and complex}

\label{def:prism complex}

Most of the work in this document uses what we call the \emph{prism Cayley graph} of $\gp$, following Genevois' study of graph products and other groups in \cite{genevois2017}. This is the Cayley graph using the generating set $S_p$ (the \emph{prism generating set} for $\gp$) defined as follows:
\[   S_p := \bigcup\limits_{v\in V(\Gamma)} G_v\setminus \{\id_{G_v}\}. \label{prism generating set}\]
The prism Cayley graph is denoted $\Cay_p(\gp)$, geodesic words for the prism generating set are referred to as \emph{prism geodesics} or \emph{$S_p$--geodesics}, and $|\cdot|_p$ is the prism length for a group element. Genevois proved that the prism Cayley graph of a graph product is always a quasi-median graph \cite[Proposition 8.2]{genevois2017}.

To this prism Cayley graph is associated a cell complex: we say that the \emph{prism complex} $P(\gp)$ of $\gp$ is the 2--complex whose 1--skeleton is the prism Cayley graph (the 1--skeleton is labeled by the elements of the prism generating set) and whose 2--cells correspond to the following relations:

\begin{align} 
    &[g,h]=\id \text{ when } g\in G_v, h\in G_u \text{ with } \{u,v\}\in E(\Gamma)  \label{relation:Square} \tag{$\openbox$}\\
     &  g\cdot g' = (gg') \text{ when } g,g'\in G_v\setminus \{\id_{G_v}\}, g'\neq {g^{-1}} \label{relation:Triangle} \tag{$\triangle$}
\end{align}

This complex is therefore made out of squares and triangles as illustrated in \Cref{fig:PrismComplex} (which motivates its name). Edges of a triangle are labeled by elements in the same vertex groups, as are elements on opposite edges of a square. In particular, labels on opposite edges of a square are labeled by the same element, but it is possible for labels on edges of a triangle to all be distinct.

\begin{rem}
    In this paper we refer to such a cell complex as the prism complex of $\gp$ for convenience, however they are in fact a subclass of what Genevois calls prism complexes of quasi-median graphs \cite{genevois2017}. In fact, what we call the prism complex is the 2--skeleton of Genevois' quasi-median complex, which is obtained by filling every clique with a simplex, and filling every 1--skeleton of an $n$--cube with an $n$--cube. As we do not need to consider any higher dimensional cells, we omit them here.
\end{rem}

\begin{figure}[!h]
    \centering
    \includegraphics[width=0.5\linewidth]{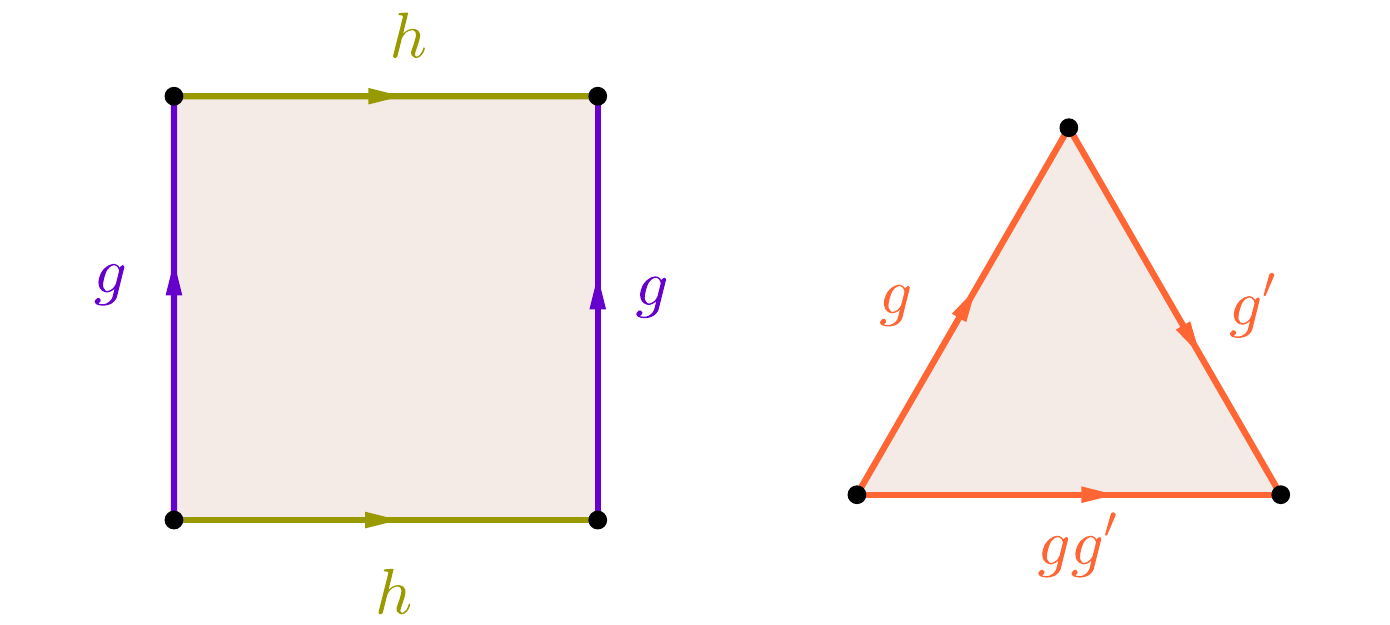}
    \caption{2--cells of the prism complex}
    \label{fig:PrismComplex}
\end{figure}

\subsubsection{Hyperplanes and carriers}\label{subsubsec:hyperplanes}
In the prism complex, a \emph{hyperplane} represents an equivalence class of edges: two edges are equivalent if they belong to the same triangle or are opposite edges of the same square. All the edges in a given hyperplane are labeled by generators from the same vertex group; this defines the vertex (and vertex group) \emph{associated to} that hyperplane.  Each hyperplane corresponds to a dual \emph{geometric hyperplane}: the union of the \emph{midcubes} intersecting the edges of that hyperplane in $P(\gp)$, where midcubes for the three kinds of cells are as illustrated in Figure \ref{fig:MidCubes}.

\begin{figure}[!ht]
    \centering
    \includegraphics[width=0.5\linewidth]{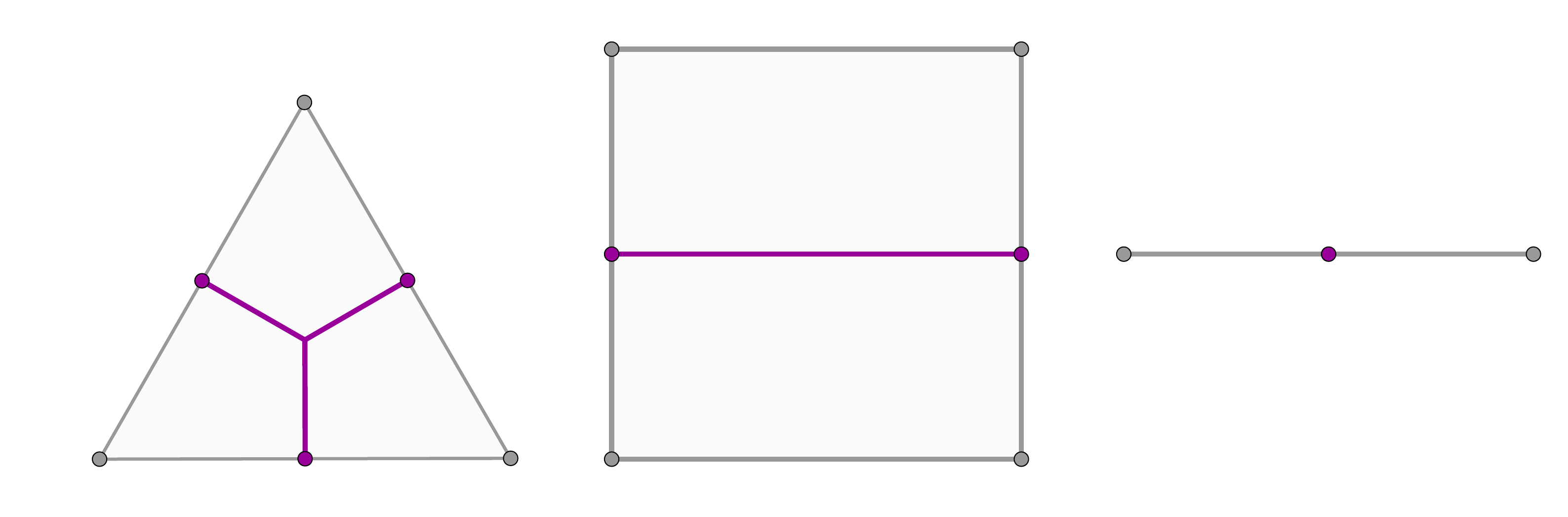}
    \caption{Midcubes for 1-- and 2--cells of $P(\gp)$}
    \label{fig:MidCubes}
\end{figure}

A hyperplane may also be identified with its \emph{carrier}: the union of closed cells that intersect the geometric hyperplane. From definitions one can verify that the 0--skeleton of a carrier is precisely some coset of $G_{\Star(v)}$, where $v$ is the vertex associated to the hyperplane \cite{genevois2017,GenevoisMartin}.  We also make use of its 1--skeleton, the full subgraph induced in $P(\gp)$ by these vertices (following \cite{valiunas2021graphproducts}). Note that the edges of the 1--skeleton that do not belong to the hyperplane itself are labeled by generators from $G_{\Link(v)}$.

Two distinct hyperplanes $\mathfrak{h}$ and $\mathfrak{h}'$ are said to be \emph{transverse} if there exist a pair of equivalent edges in $\mathfrak{h}$ and a pair of equivalent edges in $\mathfrak{h}'$ which together form a square in the prism complex; equivalently, their corresponding geometric hyperplanes intersect. Note that this can only occur when the vertices corresponding to these two hyperplanes span an edge in $\G$, so the vertex groups commute. Distinct hyperplanes are \emph{tangent} if they are not transverse, but their carriers intersect.

\subsubsection{Contact graph}\label{subsubsec:ContactGraph}

The \emph{contact graph} $\CX$ of the graph product $\gp$ is the graph whose vertices correspond to the hyperplanes of $P(\gp)$, and whose edges connect pairs of vertices whose associated hyperplanes are either transverse or tangent; equivalently, there is an edge between two hyperplanes when their carriers intersect, an example is drawn in \Cref{fig:ContqctCrossingGrqphs}. The natural action of the group $\gp$ on $P(\gp)$ extends to an isometric action on $\CX$.

\begin{figure}[h]
    \centering
    \includegraphics[scale=0.2]{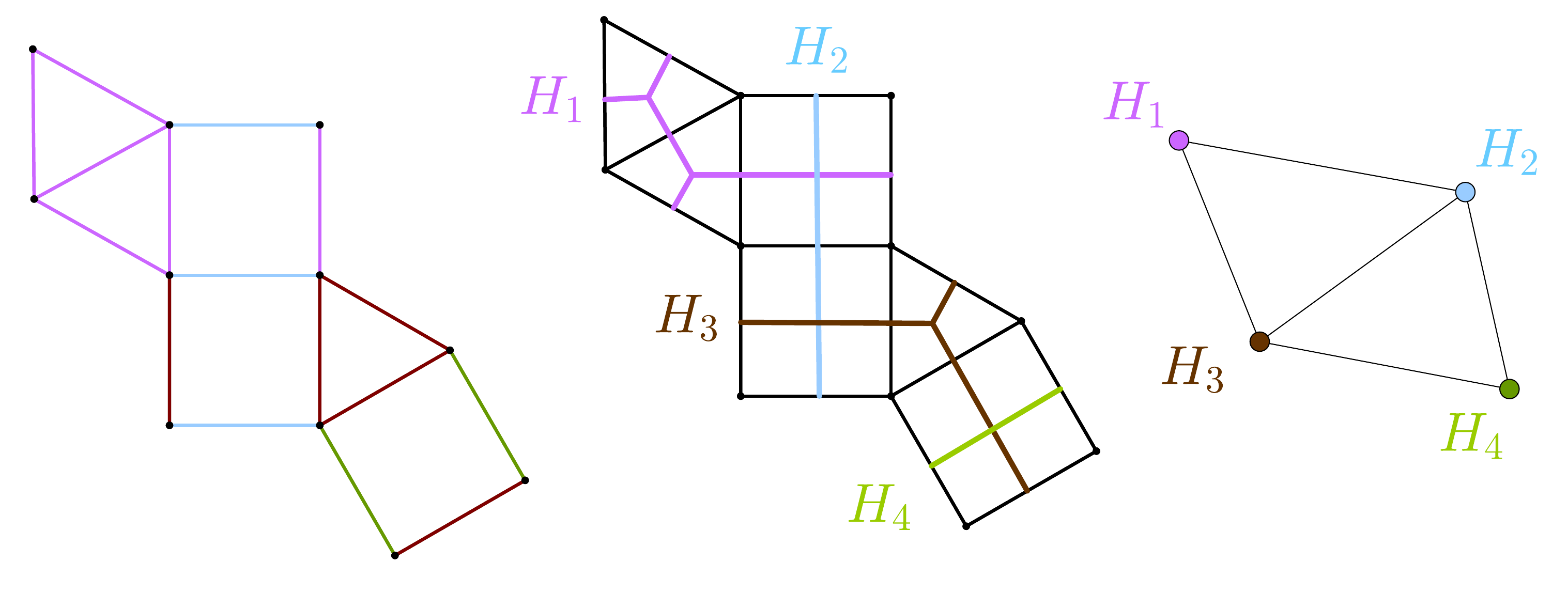}
    \caption{Hyperplanes, geometric hyperplanes, and contact graph. $H_1$ and $H_2$, $H_2$ and $H_3$, and $H_3$ and $H_4$ are transverse, while $H_1$ and $H_3$, and $H_2$ and $H_4$ are tangent.} 
    \label{fig:ContqctCrossingGrqphs}
\end{figure}

The definition above is parallel to the construction of the contact graph of a CAT(0) cube complex, introduced by Hagen in \cite{Hagen}. Several results from the cube complex case also hold for graph products, in particular the following is known.

\begin{thm}\emph{\cite[Corollary C]{valiunas2021graphproducts}}\label{thm:valiunas}
Let $\Gamma$ be a bounded degree simplicial graph, and let $\gp$ be a graph product of non-trivial groups. Then $\CX$ is a quasi-tree (and therefore a hyperbolic space), and the action of $\gp$ on $\CX$ is acylindrical.
\end{thm}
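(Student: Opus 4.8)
The plan is to prove the two assertions separately: first that $\CX$ is a quasi-tree, via Manning's bottleneck criterion, and then that the action of $\gp$ on $\CX$ is acylindrical, via an analysis of hyperplane stabilizers along geodesics of $\CX$. Throughout I would work with the combinatorial description of the contact metric, in which a path between two hyperplanes is a chain of hyperplanes with consecutive members having intersecting carriers, and $d_{\CX}$ records the length of a shortest such chain; this is the viewpoint under which Hagen treated the cube-complex case \cite{Hagen}, and the prism complex $P(\gp)$ supplies the analogous separation machinery through Genevois' description of $\Cay_p(\gp)$ as a quasi-median graph \cite{genevois2017}.

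For the quasi-tree property, the essential input is that each hyperplane $\mathfrak{m}$ partitions $P(\gp)$ into disjoint halfspaces, and that a hyperplane $\mathfrak{h}$ fails to lie entirely on one side only when its carrier meets that of $\mathfrak{m}$. Given hyperplanes $\mathfrak{h},\mathfrak{h}'$, a geodesic between them in $\CX$, and a midpoint hyperplane $\mathfrak{m}$ on that geodesic, I would argue that $\mathfrak{m}$ coarsely separates $\mathfrak{h}$ from $\mathfrak{h}'$: any chain from $\mathfrak{h}$ to $\mathfrak{h}'$ must contain a hyperplane whose carrier meets the carrier of $\mathfrak{m}$, and such a hyperplane is within contact-distance one of $\mathfrak{m}$. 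Verifying this ``every path passes near the midpoint'' statement with a constant \emph{independent} of $\mathfrak{h},\mathfrak{h}'$ establishes the bottleneck property, hence that $\CX$ is a quasi-tree, and hyperbolicity then follows formally.

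For acylindricity, I would fix $\epsilon>0$ and seek $R,N$ so that only boundedly many $g\in\gp$ move two hyperplanes at $\CX$-distance at least $R$ by at most $\epsilon$ each. If $g$ coarsely fixes both endpoints of a long $\CX$-geodesic, then $g$ coarsely preserves the geodesic and so setwise stabilizes a long subchain of hyperplanes near its middle. The plan is to bound the setwise stabilizer of a sufficiently long chain. Using that the carrier $0$--skeleton of a hyperplane associated to $v$ is a coset of $G_{\Star(v)}=G_v\times G_{\Link(v)}$ (as recorded in \Cref{subsubsec:hyperplanes}), the stabilizer of that hyperplane is the corresponding conjugate of $G_{\Star(v)}$, so the stabilizer of a chain lies in an intersection of conjugates of star subgroups. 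Since intersections of conjugates of parabolics in a graph product are again conjugate parabolics, this intersection is controlled by a subgraph contained in a translate of $\bigcap_i\Star(v_i)$, which shrinks as the chain lengthens; choosing $R$ large enough to force this subgraph to be empty (or to carry only a finite residual group) bounds the count by the resulting $N$.

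The step I expect to be the main obstacle is precisely this acylindricity count in the presence of \emph{infinite} vertex groups. Bounded degree of $\Gamma$ limits how many vertices are adjacent to a given one, but a single infinite vertex group already contributes infinitely many hyperplanes tangent or transverse to a fixed carrier, so one cannot bound the local complexity of $\CX$ naively; an element coarsely fixing two distant hyperplanes must instead be pinned down by its obligation to respect the separation order of a long chain rather than by any finite local count. Making the ``shrinking intersection of stars'' quantitative --- showing that a long geodesic in $\CX$ forces $\bigcap_i\Star(v_i)$ to collapse so that no infinite vertex group survives in the common stabilizer --- is where the genuine work lies, and is exactly the point at which both the bounded-degree hypothesis on $\Gamma$ and the non-triviality of the vertex groups must be used.
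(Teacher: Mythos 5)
First, a point of comparison: the paper does not prove this statement at all --- it is imported verbatim as \cite[Corollary C]{valiunas2021graphproducts}, so the only ``proof'' in the paper is the citation. Your proposal is therefore an attempt at an original argument, and as written it is a plan rather than a proof: you explicitly defer the quantitative core of the acylindricity argument (``where the genuine work lies''), and the quasi-tree half also rests on an unverified claim. Concretely, in the bottleneck step you assert that the midpoint hyperplane $\mathfrak{m}$ of a $\CX$--geodesic coarsely separates the endpoints in $P(\gp)$. A $\CX$--geodesic is just a shortest chain of pairwise-contacting hyperplanes; there is no reason its middle term separates the carriers of the endpoints, and identifying a genuinely separating hyperplane that is uniformly close to the midpoint is exactly the content of Hagen's argument in the cube-complex case. (The separation argument itself --- a connected union of consecutive carriers crossing a separating hyperplane must contain a carrier meeting it --- is fine once you have such an $\mathfrak{m}$; also note that in the quasi-median setting a hyperplane cuts $P(\gp)$ into possibly more than two sectors, not two halfspaces.)

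The acylindricity half has two more serious problems. First, an element $g$ that moves two far-apart hyperplanes by at most $\epsilon$ does \emph{not} setwise stabilize any subchain of a geodesic between them; it only coarsely preserves the geodesic, so the reduction to ``bound the setwise stabilizer of a long chain'' loses the generic case and would at best prove something like weak acylindricity for honest stabilizers. Second, even granting that reduction, the proposed mechanism fails: the stabilizer of the chain is $\bigcap_i g_iG_{\Star(v_i)}g_i^{-1}$, and what controls this is the intersection of the \emph{cosets} $g_iG_{\Star(v_i)}$, not the subgraph $\bigcap_i\Star(v_i)$. A long $\CX$--geodesic can have all of its hyperplanes labelled by just two adjacent vertices $u,v$, in which case $\bigcap_i\Star(v_i)\supseteq\{u,v\}$ never collapses, yet acylindricity must still hold; conversely two distinct parallel cosets of the same $G_{\Star(v)}$ already intersect trivially as cosets while contributing nothing to shrinking the subgraph. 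So the quantity you propose to drive to zero is not the one that governs the count, and the argument as structured would not close. To make this route work you would need the finer coset-intersection machinery for parabolics in quasi-median graphs (as in Genevois and Valiunas), which is precisely what the cited Corollary C packages.
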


\subsubsection{Star Cayley graph} \label{star generating set}

The last generating set we consider for $\gp$ comes from star subgroups:
\[ S_\star := \bigcup\limits_{v\in V(\Gamma)} G_{\Star(v)}\setminus\{\id_{G_{\Star(v)}}\}.   \]
We call its associated Cayley graph $\Cay_\star(\gp)$ the \emph{star Cayley graph}, and apply analogous terminology and notations as for the previous Cayley graphs: geodesic words for this generating set are \emph{star geodesics}, and $|\cdot|_\star$ denotes the star length of an element.

The following was observed by Valiunas in \cite[Proof of Corollary D]{valiunas2021graphproducts}, and follows from \cite[Section 8.1]{genevois2017} and \cite[Theorem 2.10]{GenevoisMartin}. We include a short proof here for completeness.

\begin{lem}\emph{\cite{genevois2017,GenevoisMartin}}
\label{lem:Contact=star}
 For a graph product $\gp$, the contact graph of the prism complex is $\gp$-equivariantly quasi-isometric to the star Cayley graph.
\end{lem}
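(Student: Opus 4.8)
The plan is to exhibit the quasi-isometry as an orbit map, so that $\gp$-equivariance is automatic. Fix a vertex $v_0\in V(\G)$ and an edge at $\id$ labeled by a generator of $G_{v_0}$; let $\mathfrak{h}_0$ be the hyperplane containing this edge, so that $\id$ lies in the carrier of $\mathfrak{h}_0$ and the $0$--skeleton of that carrier is exactly the coset $G_{\Star(v_0)}$. I claim the orbit map $\phi\colon \Cay_\star(\gp)\to\CX$, $g\mapsto g\cdot\mathfrak{h}_0$, is the desired equivariant quasi-isometry. The single fact that drives everything is the identification recalled in \Cref{subsubsec:hyperplanes}: the $0$--skeleton of the carrier of any hyperplane dual to a vertex $w$ is a coset $c\,G_{\Star(w)}$. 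This has the immediate consequence that if two vertices $x,y$ of $P(\gp)$ lie in a common carrier, then $x^{-1}y\in G_{\Star(w)}$, hence $x^{-1}y$ is either trivial or a single element of the star generating set $S_\star$; in particular $|x^{-1}y|_\star\le 1$.

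Coarse surjectivity is then one line: given any hyperplane $\mathfrak{k}$, choose a vertex $g$ in its carrier; the carrier of $g\cdot\mathfrak{h}_0$ is $g\,G_{\Star(v_0)}$, which also contains $g$, so the two carriers meet and $d_{\CX}(\mathfrak{k},\phi(g))\le 1$. For the Lipschitz (upper) bound, by equivariance it suffices to compare $d_{\CX}(\mathfrak{h}_0,\phi(h))$ with $|h|_\star$. Take a star geodesic $\id=g_0,g_1,\dots,g_n=h$ with $g_i=g_{i-1}s_i$ and $s_i\in G_{\Star(v_i)}$, where $n=|h|_\star$. The hyperplane $\mathfrak{k}_i$ whose carrier has $0$--skeleton $g_{i-1}G_{\Star(v_i)}$ contains both $g_{i-1}$ and $g_i$, so consecutive $\mathfrak{k}_i,\mathfrak{k}_{i+1}$ share the vertex $g_i$ and are equal or adjacent in $\CX$; moreover $\mathfrak{k}_1$ shares $\id$ with $\mathfrak{h}_0$ and $\mathfrak{k}_n$ shares $h$ with $\phi(h)$. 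Concatenating gives a path $\mathfrak{h}_0,\mathfrak{k}_1,\dots,\mathfrak{k}_n,\phi(h)$ in $\CX$, so $d_{\CX}(\mathfrak{h}_0,\phi(h))\le |h|_\star+1$.

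The reverse (lower) bound is the point that makes $\phi$ a quasi-isometric \emph{embedding}, and is where the carrier--coset fact really earns its keep. Let $\mathfrak{h}_0=\mathfrak{l}_0,\mathfrak{l}_1,\dots,\mathfrak{l}_m=\phi(h)$ be a geodesic in $\CX$, so $m=d_{\CX}(\mathfrak{h}_0,\phi(h))$ and consecutive carriers intersect. Since carriers are subcomplexes, each intersection of consecutive carriers is a nonempty subcomplex and hence contains a vertex $x_i\in\mathrm{carrier}(\mathfrak{l}_i)\cap\mathrm{carrier}(\mathfrak{l}_{i+1})$. Both $x_{i-1}$ and $x_i$ lie in $\mathrm{carrier}(\mathfrak{l}_i)$, so by the boxed consequence above $|x_{i-1}^{-1}x_i|_\star\le 1$; likewise $\id$ and $x_0$ lie in $\mathrm{carrier}(\mathfrak{h}_0)$, and $x_{m-1}$ and $h$ lie in $\mathrm{carrier}(\phi(h))$, each contributing at most one star step. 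Chaining $\id\to x_0\to\cdots\to x_{m-1}\to h$ yields $|h|_\star\le m+1=d_{\CX}(\mathfrak{h}_0,\phi(h))+1$. Together with the previous paragraph this gives $|h|_\star-1\le d_{\CX}(\mathfrak{h}_0,\phi(h))\le |h|_\star+1$, i.e.\ a $(1,1)$--quasi-isometric embedding that is $1$--coarsely surjective and $\gp$--equivariant, as required. I do not expect a serious obstacle here: once the carrier is correctly identified with a star-subgroup coset and one observes that intersecting carriers share a vertex, both metric estimates reduce to translating unit steps back and forth. The only point demanding care is the justification that the intersection of two touching (tangent or transverse) carriers always contains a $0$--cell, which follows since carriers are unions of closed cells.
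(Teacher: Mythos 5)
Your proposal is correct and follows essentially the same route as the paper: identify each hyperplane with the star-subgroup coset forming the $0$--skeleton of its carrier, send $g$ to the hyperplane with carrier $gG_{\Star(v_0)}$, and convert star-geodesic steps into chains of intersecting carriers (and vice versa) to get both metric bounds plus coarse surjectivity. The only difference is cosmetic: your constants ($|h|_\star+1$ versus the paper's $2|h|_\star$ for the upper bound) come from routing the path through intermediate hyperplanes rather than invoking the triangle inequality on unit steps.
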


\begin{proof}
    Let $d$ be the metric on $\CX$. Identify hyperplanes in $P(\gp)$ with the 0--skeletons of their carriers, which, as mentioned in \Cref{subsubsec:hyperplanes}, are of the form $gG_{\Star(v)}$ for some $g\in\gp$ and $v\in V(\G)$. Recall that two hyperplanes are adjacent in $\CX$ if and only if their carriers intersect, and note that any such intersection implies the intersection of their 0--skeletons. Fix $v\in V(\G)$, and define $f:\Cay_{\star}(\gp)\to\CX$ by $f(g)=gG_{\Star(v)}$. This map is clearly $\gp$-equivariant, so we just have to verify that it is a quasi-isometry.

    Suppose $g\in \gp$ is such that $|g|_{\star}=1$, so $g\in G_{\Star(w)}$ for some $w\in V(\G)$. Then $G_{\Star(v)}\cap G_{\Star(w)}$ and $G_{\Star(w)}\cap gG_{\Star(v)}$ are both nonempty (containing $\id$ and $g$ respectively), so $d(G_{\Star(v)},gG_{\Star(v)})\leqslant 2$. For general $g\in\gp$ the triangle inequality therefore tells us that $d(G_{\Star(v)},gG_{\Star(v)})\leqslant 2|g|_{\star}$.

    For the opposite inequality, let $g\in G\backslash\{\id\}$, and let $k = d(G_{\Star(v)},gG_{\Star(v)})$. This gives us a sequence of $k+1$ hyperplanes $G_{\Star(v)}=H_0,\ldots,H_{k}=gG_{\Star(v)}$ such that for every $i$ the carriers of $H_i$ and $H_{i+1}$ intersect. By following paths along the 1--skeletons of these carriers, we can therefore write $g$ as $h_0\cdots h_k$, where each $h_i$ belongs to the star subgroup corresponding to $H_i$. Therefore $|g|_{\star}\leqslant k+1\leqslant d(G_{\Star(v)},gG_{\Star(v)})+1$.

    Finally, let $g\in\gp$ and $w\in V(\G)$. Then $gG_{\Star(w)}\cap gG_{\Star(v)}\neq\emptyset$, so $gG_{\Star(w)}$ is either adjacent to, or equal to, $f(g)$.
\end{proof}

\begin{rem}
    This proof also shows that the contact graph is connected.
\end{rem}

\section{Disk diagrams}
\label{sec:DD}

Many of the proofs in this paper are visualized using \emph{disk diagrams}, a version of the \emph{dual van Kampen diagrams} used for graph products by Valiunas \cite[Section 6.2]{valiunas2021graphproducts}, and also by Genevois \cite{Genevois_VanKampen}.  For finitely presented groups this notion is classical (cf. the \emph{pictures} of \cite{MR787801}), but our particular use of disk diagrams for graph products closely mirrors their more recent use for right-angled Artin groups (\cite{CrispWiest}, \cite{Kim}, \cite{KimKoberda2}).  In \Cref{sec:constructing_DD} we review the construction of such diagrams, and in \Cref{sec:DD_lemmas} we note some basic lemmas that will be important to our applications. In \Cref{sec:combing}, \Cref{sec:star-free}, and \Cref{sec:proof} we prove some more technical disk diagram results that will be key to the proof of the main theorem.

\subsection{Constructing disk diagrams}
\label{sec:constructing_DD}

By the nature of the relators for a presentation of $\gp$ using the prism generating set $S_p$, a van Kampen diagram $\vkd$ for an identity word $s=s_1\dots s_k$ will be a simply-connected complex made of squares, triangles, and edges embedded in $\mathbb{R}^2$, with edges labeled by the letters $s_i \in S_p$ around the boundary (see, for example, \cite{LS}).  Our convention will be to read boundary words clockwise in van Kampen diagrams, their duals, and disk diagrams alike.  Observe that (geometric) hyperplanes and their carriers can be defined for $\vkd$ in the same way as they are for the prism complex, using a cellular map from $\vkd$ to $P(\gp)$ that respects edge labels.  In particular, each geometric hyperplane and hyperplane carrier in a van Kampen diagram is a connected component of the preimage of a geometric hyperplane or hyperplane carrier in the prism complex, and two geometric hyperplanes can only intersect in a van Kampen diagram if their corresponding hyperplanes in the prism complex are transverse.

The dual of such a van Kampen diagram can be constructed as follows (and as illustrated in \Cref{fig:fronVKtoDVK}): enlarge $\vkd$ to the embedded complex $\vkd'$ by adding one more vertex $v_\infty$ to $\R^2 \setminus \vkd$, then adding edges between this vertex and each vertex encountered along the boundary of $\vkd$ (traversed cyclically), and finally adding 2--cells for each triangle bounded by a $\vkd$-boundary edge plus the two newly-added edges adjacent to it.  The \emph{dual van Kampen diagram} $\dvkd$ is the dual of $\vkd'$ with the face corresponding to $v_\infty$ deleted: it is a tessellated disk whose 1--skeleton consists of $\partial\dvkd$ plus an interior subgraph, which may be disconnected. 

\begin{rem} \label{rem:dual=GeomHyp}
    Note that if one takes the van Kampen diagram together with its geometric hyperplanes, then $\Delta$ can be embedded in $\mathbb{R}^2$ such that it contains $\vkd$, as in \Cref{fig:fronVKtoDVK}, and such that the intersection of the 1--skeleton of $\Delta$ together with $\vkd$ is exactly the union of the geometric hyperplanes of $\vkd$.
\end{rem}

\begin{figure}[!h]
    \centering
   \includegraphics[width=1\linewidth]{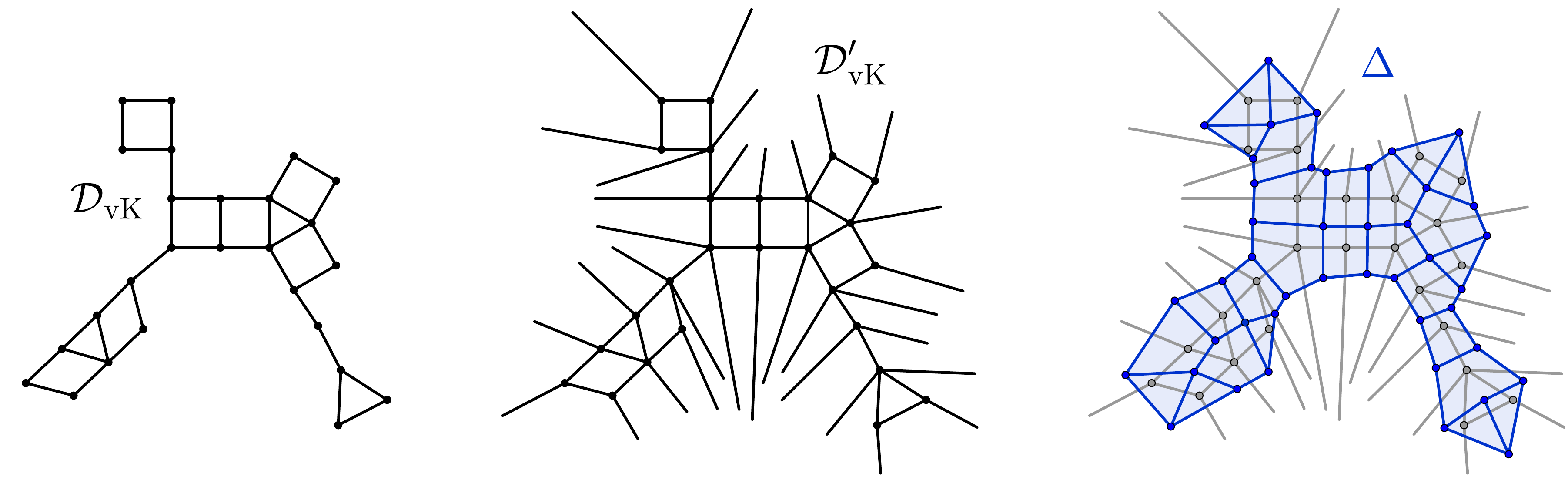}
    \caption{From van Kampen diagram to dual van Kampen diagram}
    \label{fig:fronVKtoDVK}
\end{figure}

The boundary of a van Kampen diagram $\vkd$ is generally not an embedded circle, but $\partial\dvkd$ is a topological circle (as it effectively comes from a neighborhood of $\vkd$), and furthermore we can identify $\dvkd$ with the topological disk $\dd_0$.  We will define a disk diagram for $w$ to be a certain labeling of the topological disk $\dd_0$ along with an ``almost cellular'' 
map $\phi:\dd_0 \to \vkd$ with certain properties (Proposition \ref{prop:disk_diagram_existence} below), with dual van Kampen diagrams serving as an intermediate step in the proof of their existence.

\begin{prop}[Disk diagram existence]\label{prop:disk_diagram_existence} Let $s$ be an identity word in the prism generating set for a graph product $\gp$, and let $\vkd$ be a van Kampen diagram for $s$. There exists a map $\phi$ from the disk $\dd_0$ to $\vkd$ such that:
\begin{itemize}
    \item[(a)] the image of $\partial \dd_0$ is the circuit along the boundary of $\vkd$ following edges labeled in order by the letters of $s$;
    \item[(b)] $\phi$ is a local homeomorphism on the pre-image of the interior of any 2--cell in $\vkd$;
    \item[(c)] the pre-image of any geometric hyperplane in $\vkd$ is an embedded graph in $\dd_0$.
\end{itemize} 
\end{prop}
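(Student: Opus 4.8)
The plan is to take the domain $\dd_0$ to be the dual van Kampen diagram $\dvkd$ constructed above, which is already a genuine tessellated disk with $\partial\dd_0=\partial\dvkd$ an embedded circle, and to build $\phi$ as the collapse of this disk onto $\vkd$. The essential reason for passing to $\dvkd$ is that $\vkd$ itself need not be a disk: it may have spurs (from free cancellation inside a vertex group), cut vertices, or a boundary circuit that is not embedded, and the dual disk resolves all of these pathologies at once while retaining enough of the structure of $\vkd$ to read off its hyperplanes and boundary word.

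First I would fix the identification of $\dd_0$ with $\dvkd$ and realize the picture of \Cref{rem:dual=GeomHyp}: embed $\dd_0$ in $\mathbb{R}^2$ as a regular neighborhood of $\vkd$, so that $\vkd\subseteq\dd_0$, the complement $\overline{\dd_0\setminus\vkd}$ is a collar built from strips and corner regions abutting $\partial\vkd$, and the $1$--skeleton of $\dd_0$ meets $\vkd$ in exactly the union of its geometric hyperplanes. I then define $\phi\colon\dd_0\to\vkd$ to be a PL deformation retraction of this neighborhood onto its spine $\vkd$, taken to be the identity on $\vkd$ and, on each collar strip over a boundary edge $e$, the fiberwise projection onto $e$. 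This map is ``almost cellular'': it is the identity on the open $2$--cells of $\vkd$ and collapses the collar $2$--cells onto $1$--cells. Near a spur the retraction is two-to-one, which is the only source of non-injectivity and is harmless.

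With $\phi$ in hand, the three properties follow from the regular-neighborhood structure. For (a), $\partial\dd_0$ is the boundary of the neighborhood, and its image under $\phi$ is precisely the boundary circuit of $\vkd$, which by construction is read off by the letters of $s$ clockwise. For (b), over the interior of a $2$--cell $\sigma$ the neighborhood is trivial, so $\phi^{-1}(\operatorname{int}\sigma)=\operatorname{int}\sigma$ and $\phi$ is the identity there, hence a local homeomorphism; no collar point maps into $\operatorname{int}\sigma$ because the collar retracts onto the $1$--skeleton $\partial\vkd$. For (c), the key observation is that $\phi^{-1}(\mathfrak{h})$ is carried by the $1$--skeleton of $\dd_0$: the hyperplane $\mathfrak{h}$ itself lies on that $1$--skeleton by \Cref{rem:dual=GeomHyp}, and the only additional points of $\phi^{-1}(\mathfrak{h})$ are the collar arcs over the midpoints where $\mathfrak{h}$ crosses $\partial\vkd$, which are the dual edges running out to $\partial\dd_0$. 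Since any subgraph of the $1$--skeleton of a tessellated disk is an embedded graph, (c) follows.

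The main obstacle I expect is not any single one of (a)--(c) but rather making the retraction $\phi$ genuinely well defined and verifying the regular-neighborhood picture of \Cref{rem:dual=GeomHyp} through the degeneracies of $\vkd$: one must check that $\dvkd$ really is a disk when $\vkd$ has spurs or cut vertices, that the collar decomposition behaves consistently where several $2$--cells and free edges meet at a single vertex, and that tracing $\partial\dd_0$ recovers the cyclic word $s$ without ambiguity. The transversality bookkeeping needed for (c) --- that distinct geometric hyperplanes of $\vkd$ cross only when the corresponding hyperplanes of $P(\gp)$ are transverse --- is already supplied by the discussion preceding the proposition, so once the disk and the retraction are set up correctly the embeddedness of each hyperplane pre-image is essentially automatic.
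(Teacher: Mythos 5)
Your proposal is correct and follows essentially the same route as the paper: both take $\dd_0$ to be the dual van Kampen diagram $\dvkd$, regarded as a neighborhood of $\vkd$, and define $\phi$ as the deformation retraction onto $\vkd$, with property (c) following from the fact (recorded in \Cref{rem:dual=GeomHyp}) that the geometric hyperplanes of $\vkd$ pull back to subgraphs of the inner $1$--skeleton of $\dvkd$. Your additional attention to the degeneracies of $\vkd$ (spurs, cut vertices, non-embedded boundary circuit) and the explicit collar/fiberwise description of the retraction only make explicit what the paper leaves implicit.
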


\begin{proof}
    Letting $\vkd$ be as in the statement, we start with the dual van Kampen diagram construction $\dvkd$ described above, recalling that $\dvkd$ can be seen as a neighborhood of $\vkd$, and in particular observing that $\dvkd$ deformation retracts to $\vkd$. We also have that there is a natural homeomorphism between $\dd_0$ and $\dvkd$.
    
    As noted above (see \Cref{rem:dual=GeomHyp}), we can arrange the composition $\dd_0 \to \dvkd \to \vkd$ such that the geometric hyperplanes in $\vkd$ pull back to subgraphs of the inner 1--skeleton of $\Delta$, and therefore pull back to embedded graphs in $\dd_0$.  We let $\phi:\dd_0 \to \vkd$ come from the composition, which concludes the proof.
\end{proof}

This enables the following definition. 

\begin{defn}  
    Let $s=s_1\cdots s_k$ be an identity word in $S_p$. A \emph{disk diagram with boundary word} $s$ is a disk $\dd$ with a marking by a map $\phi$ as in \Cref{prop:disk_diagram_existence} such that
    \begin{itemize}
        \item $\partial \dd$ is segmented by the pre-images of the boundary edges of $\vkd$, and each segment is marked by the corresponding letter $s_i$ of $s$,
        \item the interior of $\dd$ is marked by the pre-images of the geometric hyperplanes.
    \end{itemize}
The connected components of such pre-images of hyperplanes are called \emph{dual graphs}, their dual edges in the boundary and their corresponding generator labels are called \emph{roots} of the dual graph, and the dual graph is said to be \emph{rooted} in its dual edges of $s$.
\end{defn}

\begin{rem}
    Note that properties of the geometric hyperplanes also apply to the dual graphs. In particular, each dual graph is associated to a given vertex of $\Gamma$, and all its roots belong to the corresponding vertex group. Moreover, distinct dual graphs may only intersect if their corresponding vertices have an edge between them in $\Gamma$.
\end{rem}

\begin{rem}
    The construction of dual van Kampen diagrams serves to prove the existence of disk diagrams, but the point the reader should keep in mind is the correspondence between van Kampen diagrams and disk diagrams. Most of the proofs in the rest of paper are based on disk diagrams. The pictures to keep in mind are illustrated in \Cref{fig:vKtodisk}. Here the colors can be thought of as corresponding to the vertex groups associated to the hyperplanes.

    \begin{figure}[!h]
    \centering
           \includegraphics[scale=0.4]{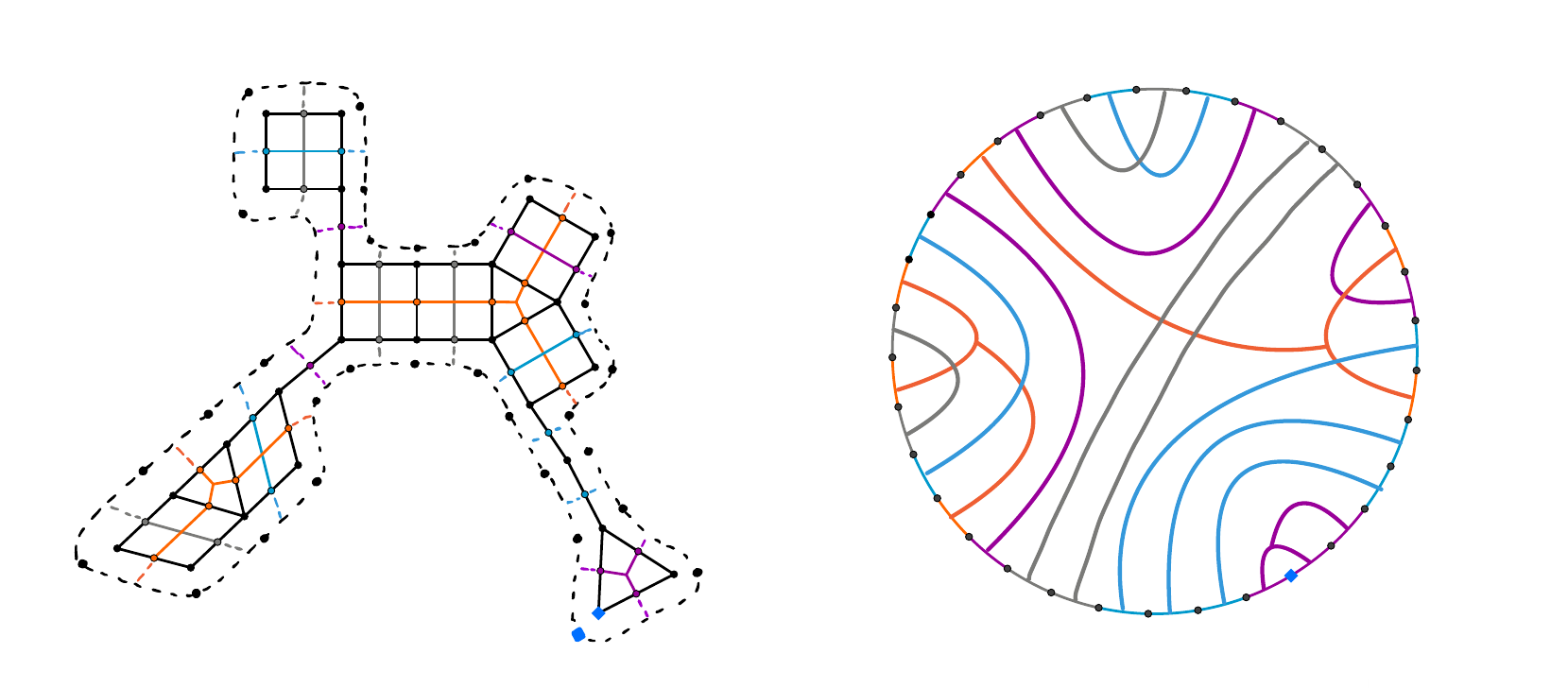}
    \caption{From (dual) van Kampen diagram to disk diagram}
    \label{fig:vKtodisk}
    \end{figure}
\end{rem}

\begin{conv*}
    As already mentioned, we will work with disk diagrams oriented clockwise. Most of the time we will use disk diagrams to study identity words of the form $s=g\overline{w}$, where $w=w_1\cdots w_m$ is a prism geodesic representative for $g$, and $\overline{w}$ is its inverse; then $g$ will be read clockwise and $w$ counter-clockwise as illustrated in \Cref{fig:convention}.
\end{conv*} 

\begin{figure}[!h]
    \centering
           \includegraphics[scale=0.4]{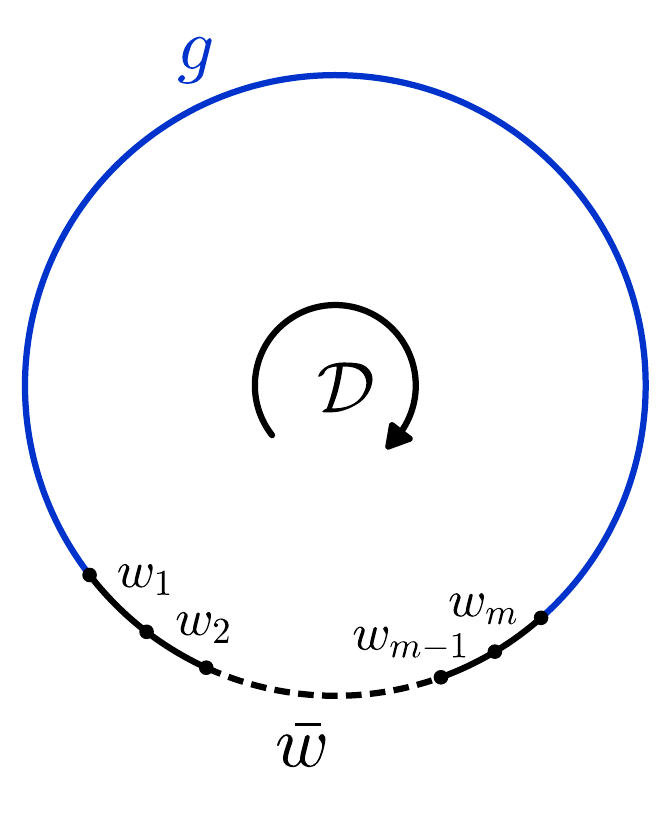}
    \caption{Convention for disk diagrams}
    \label{fig:convention}
\end{figure}

We will be interested especially in paths in $\dd$ that map to edge paths in $\vkd$.  Already we may consider $\partial\dd$ as having inherited a cellular decomposition into vertices and edges, corresponding to its image circuit in the van Kampen diagram. Between vertices on $\partial\dd$ that are adjacent to edges that are roots of the same dual graph $\alpha$, a \emph{word along the carrier of the dual graph} means the product of a sequence of generators labeling an edge path in $\vkd$ connecting the corresponding vertices of $\partial\vkd$, which follows the 1--skeleton of the carrier of the hyperplane $\phi(\alpha)$. See examples in \Cref{fig:word_carrier}. In particular, if $\alpha$ corresponds to $v\in V(\gp),$ then words along its carrier spell elements of $G_{\Star(v)}$. 
\begin{figure}[!h]
           \includegraphics[scale=0.56]{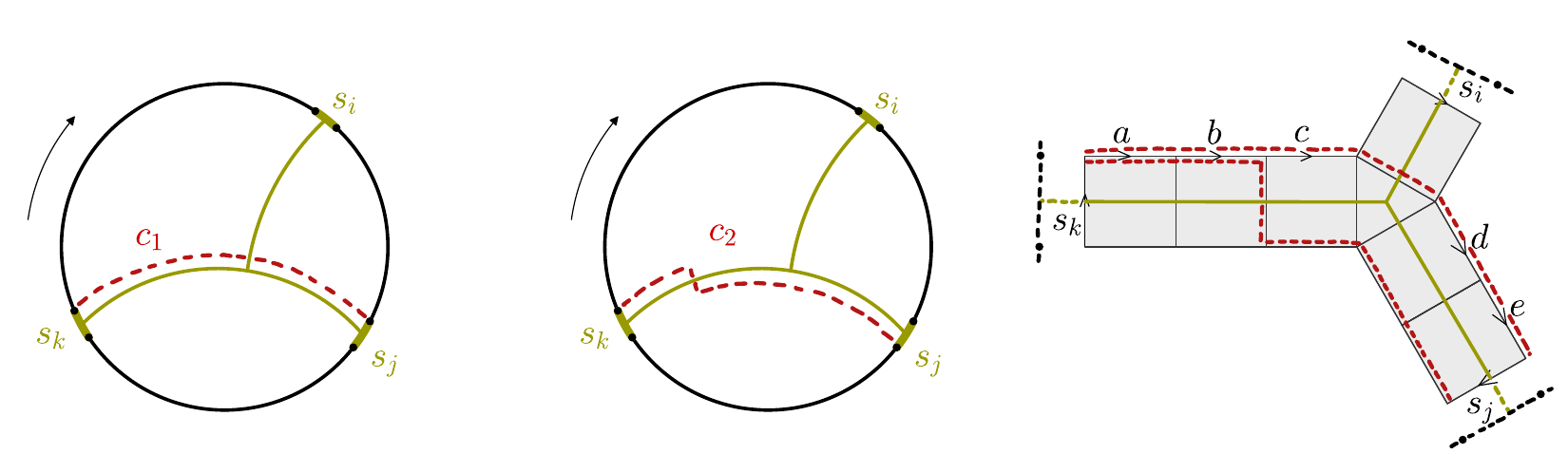}
    \caption{Words along a carrier: $c_1=abcs_ide$, $c_2=ab\bar{s}_kcde$}
    \label{fig:word_carrier}
\end{figure}

\subsection{Disk diagram lemmas}

\label{sec:DD_lemmas}

Here we collect some fundamental properties of disk diagrams that will be used in this paper. First, we note that disk diagrams concatenate in the same way van Kampen diagrams do.

\begin{lem}[Concatenating disk diagrams]\label{lem:disk_concatenation}
    Let $s_1$ and $s_2$ be identity words in the prism generating set for $\gp$, with respective disk diagrams $\mathcal{D}_1$ and $\mathcal{D}_2$ given by Proposition \ref{prop:disk_diagram_existence}. Suppose $b$ is a subword of $s_1$ so that $s_1$ is the product $a_1bc_1$ of three subwords (with one of $a_1$ and $c_1$ possibly empty), and $\bar b$ a subword of $s_2$ with $s_2=a_2\bar bc_2$.  Then identifying $\mathcal{D}_1$ and $\mathcal{D}_2$ along the subsets of their boundaries labeled by the letters of $b$ provides a disk diagram for $a_1 c_2 a_2 c_1$.
\end{lem}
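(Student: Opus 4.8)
The plan is to reduce to the standard concatenation of van Kampen diagrams and then transport the result through the marking maps. By \Cref{prop:disk_diagram_existence}, each $\mathcal{D}_i$ carries a marking $\phi_i \colon \mathcal{D}_i \to \mathcal{V}_i$, where $\mathcal{V}_i$ is a van Kampen diagram for $s_i$. Since $b$ and $\bar b$ are mutually inverse words, the boundary arc of $\mathcal{V}_1$ spelling $b$ and the boundary arc of $\mathcal{V}_2$ spelling $\bar b$ consist of edges that are pairwise inverse. Identifying these arcs edge-by-edge (matching each edge of $b$ to the oppositely oriented edge spelling its inverse in $\bar b$) is exactly the usual van Kampen concatenation, and yields a van Kampen diagram $\mathcal{V}$. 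Reading $\partial\mathcal{V}$ clockwise --- along $a_1$, then across the identified corner along $c_2$, then $a_2$, then $c_1$ --- shows that its boundary word is $a_1 c_2 a_2 c_1$, the claimed identity word.

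Next I would carry out the analogous gluing at the level of disks. Following our clockwise convention, the sub-arc of $\partial \mathcal{D}_1$ marked by the letters of $b$ and the sub-arc of $\partial\mathcal{D}_2$ marked by the letters of $\bar b$ are intervals segmented letter-by-letter; I glue $\mathcal{D}_1$ to $\mathcal{D}_2$ by identifying these two arcs, matching the segment marked by a letter of $b$ with the segment marked by its inverse, reversing orientation exactly as for $\mathcal{V}$. Since gluing two topological disks along a boundary arc of each produces a topological disk, the result $\mathcal{D} := \mathcal{D}_1 \cup_b \mathcal{D}_2$ is again a disk. Moreover $\phi_1$ and $\phi_2$ send the two identified arcs to the single shared $b$--arc of $\mathcal{V}$, respecting labels, so they agree on the identification and assemble into a continuous marking $\phi \colon \mathcal{D} \to \mathcal{V}$.

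It then remains to verify that $\phi$ meets the three conditions of \Cref{prop:disk_diagram_existence} relative to $\mathcal{V}$. Condition (a) is the boundary computation above, so that the induced segmentation of $\partial\mathcal{D}$ is marked by the letters of $a_1 c_2 a_2 c_1$. For (b), the concatenation only identifies boundary edges and creates no new $2$--cells, so each $2$--cell of $\mathcal{V}$ sits inside $\mathcal{V}_1$ or $\mathcal{V}_2$ and its interior is disjoint from the gluing arc; the preimage of that interior therefore lies in the corresponding $\mathcal{D}_i$, where $\phi = \phi_i$ is a local homeomorphism. For (c), I would split the geometric hyperplanes of $\mathcal{V}$ into those contained in a single $\mathcal{V}_i$ and those crossing the gluing arc; the latter are unions of a geometric hyperplane of $\mathcal{V}_1$ and one of $\mathcal{V}_2$ meeting along identified boundary edges, so their $\phi$--preimages are the corresponding unions of dual graphs of $\mathcal{D}_1$ and $\mathcal{D}_2$, joined along the roots lying on the glued arc.

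The one point needing genuine care --- the rest being bookkeeping --- is condition (c): I must check that joining two embedded graphs along these identified roots yields an embedded graph. This holds because the gluing matches the roots on the two arcs bijectively, so distinct dual edges remain distinct and the two dual graphs meet only at the shared boundary roots, introducing no new crossings, while each piece is embedded by condition (c) for $\mathcal{D}_1$ and $\mathcal{D}_2$. With (a)--(c) in hand and the interior of $\mathcal{D}$ marked by the hyperplane preimages, $\mathcal{D}$ is a disk diagram for $a_1 c_2 a_2 c_1$, as required.
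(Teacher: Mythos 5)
Your proposal is correct and follows essentially the same route as the paper: glue the van Kampen diagrams $\mathcal{V}_1$ and $\mathcal{V}_2$ along the arc labeled $b$, glue the disks $\mathcal{D}_1$ and $\mathcal{D}_2$ along the corresponding boundary arcs, and observe that $\phi_1$ and $\phi_2$ assemble into a marking map for the concatenated diagram. The paper's proof is terser (it does not spell out the verification of conditions (a)--(c) of \Cref{prop:disk_diagram_existence}), so your extra checks are a harmless elaboration rather than a different argument; the only minor imprecision is that a dual graph crossing the gluing arc may be a concatenation of \emph{several} dual graphs from each side (cf.\ \Cref{fact:concatenate-dual-graph}), not just one of each, but your embeddedness argument goes through unchanged.
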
   

\begin{proof} The disk diagram for Lemma \ref{lem:disk_concatenation} corresponds to the map $\phi$ defined as follows. The marking of $\dd_1$ comes from some map $\phi_1:\dd_1 \to \vkd^{(1)}$, where $\vkd^{(1)}$ is a van Kampen diagram for $s_1$. Similarly, the marking of $\dd_2$ comes from some map $\phi_2:\dd_2 \to \vkd^{(2)}$, where $\vkd^{(2)}$ is a van Kampen diagram for $s_2$. These van Kampen diagrams can be identified along the subsets of their boundaries labeled by $b$ to obtain a van Kampen diagram $\vkd$ for $a_1 c_2 a_2 c_1$. Then $\phi_1\cup \phi_2:\dd_1\cup \dd_2 \to \vkd$ induces a well-defined map $\phi:\dd \to \vkd$ where $\dd=(\dd_1\cup \dd_2)/\sim$ is the 2--to--1 identification along $b$ of points in $\partial \dd_1$ and $\partial \dd_2$, which have the same image in $\vkd$ under $\phi_1$ and $\phi_2$ respectively.
\end{proof}

We highlight the next observation as we will use it repeatedly.

\begin{fact}
\label{fact:concatenate-dual-graph}
    If $\dd$ is the concatenation of $\dd_1$ and $\dd_2$ then its dual graphs are concatenations of dual graphs from $\dd_1$ and $\dd_2$.
\end{fact}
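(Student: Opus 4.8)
The plan is to read the dual graphs of $\dd$ directly off the construction used to prove \Cref{lem:disk_concatenation}. There, $\dd = (\dd_1 \cup \dd_2)/\!\sim$ is the $2$--to--$1$ identification of $\dd_1$ and $\dd_2$ along the boundary edges labeled by $b$, and its marking is the map $\phi:\dd\to\vkd$ induced by $\phi_1$ and $\phi_2$, where $\vkd$ is the van Kampen diagram obtained by gluing $\vkd^{(1)}$ and $\vkd^{(2)}$ along their boundary subpaths labeled $b$. Since by definition the dual graphs of $\dd$ are the connected components of the $\phi$--preimages of the geometric hyperplanes of $\vkd$, the whole statement reduces to understanding how those geometric hyperplanes, and their preimages, decompose under this gluing.

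The key step is to observe that gluing $\vkd^{(1)}$ to $\vkd^{(2)}$ along the one--dimensional boundary path $b$ creates no new $2$--cells: every square and triangle of $\vkd$ already lies in $\vkd^{(1)}$ or in $\vkd^{(2)}$. Hence no new transversalities between hyperplanes are created, and the only effect on the hyperplane equivalence classes is that each edge of $b$---which bounds a geometric hyperplane of $\vkd^{(1)}$ and, with the same label, one of $\vkd^{(2)}$---now lies in a single class of $\vkd$. It follows that every geometric hyperplane of $\vkd$ is a union of geometric hyperplanes of $\vkd^{(1)}$ and $\vkd^{(2)}$, joined precisely along the shared edges of $b$. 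Pulling back through $\phi$, and using that $\phi$ restricts to $\phi_1$ on $\dd_1$ and to $\phi_2$ on $\dd_2$, the preimage in $\dd_i$ of such a hyperplane is a union of dual graphs of $\dd_i$; the identification $\sim$ then glues a root of $\dd_1$ on $b$ to the equally labeled root of $\dd_2$ on $\bar b$, so that these dual graphs meet exactly along the now-interior roots on $b$. Taking connected components shows each dual graph of $\dd$ to be a union of dual graphs of $\dd_1$ and $\dd_2$ glued along common roots on $b$, i.e.\ a concatenation of dual graphs from the two pieces.

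The one point needing genuine care---and the step I expect to be the main obstacle---is the claim that the gluing introduces no new hyperplane adjacencies beyond the merging along $b$. This rests on the identification taking place along a boundary arc of dimension one, so that no $2$--cell of $\vkd$ straddles the seam and no two previously non-transverse hyperplanes become transverse. Once this is verified, the decomposition of the hyperplanes, and therefore of the dual graphs, is a formal consequence of how connected components behave under a gluing along shared edges.
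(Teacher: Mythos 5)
Your argument is correct and is exactly the justification the paper has in mind: it states this as a Fact without proof, treating it as immediate from the gluing construction in \Cref{lem:disk_concatenation} (and reiterates the point in the proof of \Cref{lem:commuting}, ``when constructing a disk diagram by gluing, the new dual graphs are concatenations of dual graphs coming from the initial disk diagrams''). Your write-up simply makes explicit the key observation—that the identification along the one-dimensional seam $b$ creates no new $2$--cells, so hyperplane classes can only merge through shared edges of $b$—which is the right thing to verify.
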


Dual graphs in disk diagrams can be thought of as recording ``cancellation patterns'' for $S_p$--letters of identity words, as illustrated in the following lemma.

\begin{lem}\label{lem:single_hyperplane}
    Let $\alpha$ be a dual graph in a disk diagram $\dd$, and let $a_1,\dots,a_n$ be all the roots of $\alpha$, in order around the boundary of $\dd$. Then every $a_i$ belongs to the same vertex group, and $a_1\cdots a_n$ is an identity word. In particular, any dual graph has at least two roots.
\end{lem}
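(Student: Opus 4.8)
The plan is to use the correspondence between disk diagrams and van Kampen diagrams, together with the defining properties of the hyperplanes, to show that the roots of $\alpha$ spell an identity word. First I would recall that $\alpha$ is a connected component of the preimage of a single geometric hyperplane $\mathfrak{h} = \phi(\alpha)$ in $\vkd$, and that every edge of $\vkd$ crossed by $\mathfrak{h}$ is labeled by a generator of the same vertex group $G_v$, where $v$ is the vertex associated to $\mathfrak{h}$. Since each root $a_i$ is the label of a boundary edge of $\dd$ dual to $\alpha$, all the $a_i$ lie in $G_v$; this gives the first assertion. The second assertion is then a statement purely inside $G_v$: the product $a_1 \cdots a_n$, read in clockwise order around $\partial \dd$, must equal the identity.

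The key step is to see why the alternating product of root labels must cancel to the identity. The cleanest way I would argue this is via the carrier of the hyperplane. By \Cref{prop:disk_diagram_existence}(c) the dual graph $\alpha$ is an embedded graph in $\dd_0$, so it separates the disk $\dd$ into regions, and its roots cut $\partial\dd$ into arcs. The carrier of $\mathfrak{h}$ in $\vkd$ has $0$--skeleton a coset $gG_{\Star(v)}$, and the hyperplane $\mathfrak{h}$ records precisely where a single $G_v$--letter is introduced and later removed as one reads across the carrier. I would make this precise by pushing the roots of $\alpha$ along the carrier: between consecutive roots $a_i$ and $a_{i+1}$, the portion of the boundary path of $\vkd$ can be homotoped across the carrier of $\mathfrak{h}$, changing the running $G_v$--value by exactly the contribution of crossing $\mathfrak{h}$ there. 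Collapsing $\alpha$ to a point (equivalently, contracting the hyperplane) identifies all these crossings, and the condition that $\dd$ closes up into a disk forces the total product of the labels picked up, namely $a_1 \cdots a_n$, to be trivial in $G_v$.

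Alternatively, and perhaps more robustly, I would argue by induction on the number of $2$--cells of $\vkd$ using \Cref{lem:disk_concatenation} and \Cref{fact:concatenate-dual-graph}: a dual graph in a concatenated diagram is a concatenation of dual graphs from the pieces, and each elementary reduction (removing a triangle or a square) either leaves the root sequence of $\alpha$ unchanged or replaces a consecutive pair/triple of roots by their product in $G_v$ via relation $(\triangle)$ or by a cancellation via relation $(\openbox)$. Tracking how the relators act on the cyclic word $a_1\cdots a_n$ shows the product is invariant and, in the base case where $\alpha$ is dual to a single square, reduces to $a_1 a_2 = a_1 a_1^{-1} = \id$. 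The final clause that every dual graph has at least two roots then follows immediately: a nonempty identity word in $G_v$ built from nontrivial generators cannot have length one, since no single nontrivial element of $G_v$ equals the identity.

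The main obstacle I anticipate is making the geometric ``push across the carrier'' argument fully rigorous, in particular controlling the interaction of $\alpha$ with the \emph{transverse} hyperplanes it crosses (those associated to vertices in $\Link(v)$, which commute with $G_v$). These transverse crossings correspond to the squares of relation $(\openbox)$ and must be shown not to affect the $G_v$--value, which is exactly where the commutation relations are needed; I would handle this by observing that crossing a transverse hyperplane leaves the $G_v$--coordinate unchanged, so only the triangles $(\triangle)$ associated to $v$ itself contribute to the product $a_1\cdots a_n$.
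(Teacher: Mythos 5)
Your proposal is correct and its main (carrier-based) argument is essentially the paper's proof: the paper likewise replaces each boundary arc $c_i$ between consecutive roots by a word $c_i'$ along the carrier of $\phi(\alpha)$ chosen not to cross the hyperplane, so that $c_i'\in G_{\Link(v)}$ commutes with $G_v$, and then extracts $a_1\cdots a_n=\id$ from the direct product structure of $G_v\times G_{\Link(v)}$, concluding $n\geq 2$ because $\id$ is not a generator. The point you flag as the remaining obstacle is resolved exactly as you suggest, so no gap remains.
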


\begin{proof}
    The first claim restates the fact that generators labeling edges in the same hyperplane must belong to the same vertex group, say $G_v$. To show that $a_1\cdots a_n$ is the identity, consider the (potentially empty) boundary subwords $c_i$ between $a_i$ and $a_{i+1}$, along with $c_n$ between $a_n$ and $a_1$, so $a_1c_1a_2\ldots c_{n-1}a_nc_n=\id$. We can then consider a word $c_i'$ along the carrier of $\alpha$ from the inner end of $a_i$ to the inner end of $a_{i+1}$, so that $c_i'\overline{c_i}$ forms a closed circuit, and thus $c_i$ and $c_i'$ are group-equivalent. Therefore $a_1c_1'a_2\ldots c_{n-1}'a_nc_n'=\id$. 
    
    Note that as $\{a_1,\dots,a_n\}$ are all of the roots of $\alpha$, the subword $c_i$ does not contain any roots of $\alpha$. We can therefore choose the path $c_i'$ in the van Kampen diagram corresponding to $\dd$ such that it does not cross the image of $\alpha$, which means that every edge it traverses is labeled by an element of $G_{\Link(v)}$. As $G_v$ and $G_{\Link(v)}$ form a direct product subgroup of $\gp$, we get that $a_1\cdots a_nc_1'\cdots c_n' = \id$, which implies that $a_1\cdots a_n = \id$ due to the direct product structure.  Moreover, because $\id$ is not a generator, $n\geq 2$.
\end{proof}

Our next lemma, about geodesic subwords of an identity word, recovers as a corollary that two vertices in $\Cay_p(\gp)$ have only finitely many geodesics between them (whereas $\Cay_p(\gp)$ is locally infinite).  This fact is also a direct consequence of the normal form for graph products \cite{greennormalform}.

\begin{lem}[Single-rooted graphs]\label{lem:rooting}Suppose a disk diagram has along its boundary an $S_p$--geodesic subword $w$.  Then each dual graph roots in at most one edge of $w$.
\end{lem}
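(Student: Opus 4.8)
The plan is to argue by contradiction, exploiting the fact that two roots of a single dual graph can always be merged using the commuting (direct-product) structure of the graph product, which would shorten a word that is supposed to be geodesic. So suppose some dual graph $\alpha$, associated to a vertex $v\in V(\G)$, roots in at least two edges of the $S_p$--geodesic subword $w=w_1\cdots w_m$.

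First I would reduce to a convenient pair of roots. Since $w$ occupies a connected arc of $\partial\dd$, the roots of $\alpha$ lying in $w$ form a contiguous block in the cyclic order of the roots $a_1,\dots,a_n$ around $\partial\dd$: if $a_p$ and $a_q$ both lie in $w$, then every root strictly between them along the short boundary arc lies in $w$ as well. Consequently I can choose two roots $a=a_k$ and $b=a_{k+1}$ that are consecutive among the roots of $\alpha$ appearing in $w$, so that the boundary subword $c$ strictly between them contains no root of $\alpha$ and is itself a subword of $w$. By \Cref{lem:single_hyperplane} both $a$ and $b$ lie in $G_v$.

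Next I would replace $c$ by a word along the carrier, exactly as in the proof of \Cref{lem:single_hyperplane}: there is a word $c'$ along the carrier of $\alpha$ from the inner end of $a$ to the inner end of $b$ with $c\eqe c'$, and because $c$ contains no root of $\alpha$ this carrier path can be chosen so as not to cross the image of $\alpha$, so every letter of $c'$ lies in $G_{\Link(v)}$. Since $G_v$ and $G_{\Link(v)}$ generate a direct product, $a$ and $b$ commute past $c'$, giving
\[
a\,c\,b \;\eqe\; a\,c'\,b \;\eqe\; (ab)\,c' \;\eqe\; (ab)\,c.
\]
If $ab=\id_{G_v}$ this exhibits the subword $a\,c\,b$ of $w$ as group-equal to $c$; otherwise $ab$ is a single letter of $S_p$ and $a\,c\,b$ is group-equal to $(ab)\,c$, a word of length $|c|+1$. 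In either case the element spelled by $a\,c\,b$ has $S_p$--length at most $|c|+1$, whereas $a\,c\,b$ is a length-$(|c|+2)$ subword of the geodesic $w$ and therefore has $S_p$--length exactly $|c|+2$. This contradiction shows that $\alpha$ cannot root in two edges of $w$.

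The main point to handle carefully is the choice of the carrier word $c'$: one must verify that the path along the $1$--skeleton of the carrier joining the inner ends of $a$ and $b$ can be taken to avoid $\alpha$ itself, so that its label genuinely lies in $G_{\Link(v)}$ rather than merely in $G_{\Star(v)}$. This is exactly what the absence of roots of $\alpha$ in the intervening subword $c$ guarantees, mirroring the argument already used in \Cref{lem:single_hyperplane}; the remainder is a direct application of the direct-product structure together with the defining property of $S_p$ that every nontrivial element of a vertex group is a single generator.
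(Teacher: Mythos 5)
Your proof is correct and follows essentially the same route as the paper's: replace the boundary subword $c$ between two roots of $\alpha$ by a carrier word $c'$ with letters in $G_{\Link(v)}$, commute, and merge the two roots into a single $S_p$--letter (or the identity) to contradict geodesicity of $w$. Your explicit handling of the case $ab=\id_{G_v}$ and the justification that consecutive roots within $w$ can be chosen with no intervening roots are slightly more careful than the paper's ``without loss of generality,'' but the argument is the same.
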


\begin{proof}
    We prove the contrapositive. Let $\dd$ be a disk diagram whose boundary word has a subword $w$. Suppose that $\alpha$ is a dual graph with roots in $w$ labeled by generators $l$ and $r$, appearing in that order in $w$, and let $v\in \G$ be the vertex corresponding to $\alpha$. Let $c$ be the (possibly empty) subword of $w$ between $l$ and $r$ on $\partial\dd$. We can assume without loss of generality that $c$ does not contain any roots of $\alpha$. We can then consider a word $c'$ along the carrier of $\alpha$ from the inner end of $l$ to the inner end of $r$, so that $c'\overline{c}$ forms a closed circuit, and thus $c$ and $c'$ are group-equivalent. As in the proof of \Cref{lem:single_hyperplane}, we can choose the path $c'$ in the van Kampen diagram so that it does not cross the image of $\alpha$, which means that every edge it traverses is labeled by an element of $G_{\Link(v)}$. As $l,r\in G_v$, we see that $c'$ commutes with both of these. Thus, as group elements, $lcr = lc'r = lrc' = kc' = kc,$ where $k = lr$ is also in $S_p$. The word $kc$ is strictly shorter than $lcr$, implying that $w$ is not geodesic.
\end{proof}

\begin{cor}{\emph{\cite[Theorem 3.9]{greennormalform}}}\label{lem:finite_geodesics}
    The $S_p$--geodesic word for a given element in $\gp$ is unique up to ordering of its letters, so there are only finitely many prism geodesics between two vertices in $\Cay_p(\gp)$.
\end{cor}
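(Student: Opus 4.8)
The plan is to derive the uniqueness statement directly from Lemmas \ref{lem:single_hyperplane} and \ref{lem:rooting}, after which finiteness is immediate. Suppose $w=w_1\cdots w_m$ and $w'=w'_1\cdots w'_n$ are two $S_p$--geodesics representing the same element $g$. Then $w\overline{w'}$ is an identity word, and I would build a disk diagram $\dd$ with this boundary word, reading $w$ clockwise and $\overline{w'}$ counter-clockwise as in the standing convention. Since $w'$ is geodesic, so is its inverse $\overline{w'}$ (the word metric is symmetric); hence both $w$ and $\overline{w'}$ appear as $S_p$--geodesic subwords along $\partial\dd$.

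First I would count roots. By \Cref{lem:single_hyperplane} every dual graph of $\dd$ has at least two roots, while by \Cref{lem:rooting} each dual graph roots at most once in $w$ and at most once in $\overline{w'}$. Since every boundary edge is dual to exactly one hyperplane and hence is a root of exactly one dual graph, these two facts force each dual graph to have exactly two roots, one lying in $w$ and the other in $\overline{w'}$.

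Next I would use the identity supplied by \Cref{lem:single_hyperplane}: the two roots $a$ (in $w$) and $b$ (in $\overline{w'}$) of a given dual graph lie in a common vertex group and multiply to $\id$ in boundary order, so $b=a^{-1}$. As a letter of $\overline{w'}$, $b$ is the inverse $(w'_j)^{-1}$ of some letter $w'_j$ of $w'$, whence $w'_j=a$. Thus each dual graph pairs a letter of $w$ with an equal-valued letter of $w'$, and because every edge of $w$ and of $w'$ is a root of a unique dual graph, this is a label-preserving bijection between the letters of $w$ and those of $w'$. Therefore $w$ and $w'$ have the same multiset of letters, which is precisely the assertion that the $S_p$--geodesic is unique up to reordering. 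Finiteness then follows at once: a fixed multiset of $m=|g|_p$ letters admits at most $m!$ orderings, so there are only finitely many prism geodesics from $\id$ to $g$, and the same holds between any two vertices of $\Cay_p(\gp)$ by translation.

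The main obstacle I anticipate is purely one of bookkeeping the orientation conventions carefully---in particular, tracking that a root occurring in $\overline{w'}$ corresponds to the inverse of a genuine letter of $w'$, so that the induced pairing comes out label-preserving rather than label-inverting. No geometric input beyond the two cited lemmas should be required.
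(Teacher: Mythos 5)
Your argument is correct and is essentially the same as the paper's: form a disk diagram for $w\overline{w'}$, use \Cref{lem:single_hyperplane} and \Cref{lem:rooting} to see that every dual graph has exactly two roots, one in each geodesic, and that these roots are mutually inverse, giving a label-preserving bijection between the letters. Your extra care with the orientation of $\overline{w'}$ just makes explicit a step the paper leaves implicit.
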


\begin{proof}
    Suppose $u$ and $v$ are prism geodesics between the same pair of vertices, so that $u\overline{v}$ labels the boundary word of a disk diagram.  By \Cref{lem:single_hyperplane} and \Cref{lem:rooting}, each dual graph of this diagram has exactly two roots, one in $u$ and the other in $\overline{v}$. \Cref{lem:single_hyperplane} ensures that the letters associated to those to roots are inverse to each other. Therefore $u$ and $v$ consist of the same letters and only differ in their ordering.
\end{proof}

From these lemmas one may derive Genevois' characterization of prism-geodesic words; we give a proof for completeness of exposition.
\begin{lem}\emph{\cite[Theorem 3.2]{Genevois_VanKampen} }\label{lem:charac_geod} Let $w=w_1\cdots w_n$ be a non-trivial prism word of a graph product $\gp$. This word is geodesic if and only if for any $i<j$, if $w_i$ and $w_j$ belong to the same vertex group $G_v$, then there exists $i<k<j$ whose associated vertex in $\Gamma$ does not belong to $\Star{(v)}$.
\end{lem}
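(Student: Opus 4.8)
The plan is to treat the two implications separately, throughout using disk diagrams, and to argue each in contrapositive form. For the direction \emph{geodesic $\Rightarrow$ condition}, I would show: if there are $i<j$ with $w_i,w_j\in G_v$ and every intermediate letter $w_k$ ($i<k<j$) has its vertex in $\Star(v)$, then $w$ is not geodesic. Here the subword $x=w_i\cdots w_j$ lies in $G_{\Star(v)}=G_v\times G_{\Link(v)}$, so its $G_v$-letters commute with its $\Link(v)$-letters; collecting the (at least two) $G_v$-letters into a single element of $G_v$ and pushing the $\Link(v)$-letters to one side rewrites $x$ as a strictly shorter prism word, so $w$ cannot be geodesic.

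The substantive direction is \emph{condition $\Rightarrow$ geodesic}, again by contrapositive. Suppose $w$ is not geodesic, let $u$ be a geodesic for the same element (so $|u|<n$), and form a disk diagram $\dd$ with boundary word $w\bar u$. By \Cref{lem:single_hyperplane} every dual graph has at least two roots, and by \Cref{lem:rooting}, since $\bar u$ is geodesic, each dual graph roots at most once in $\bar u$. A short count then forces some dual graph to root at least twice in $w$: otherwise each dual graph would root exactly once in $w$ and once in $\bar u$, giving $n=|u|$, a contradiction.

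Next, among all pairs $(p,q)$ with $p<q$ for which $w_p,w_q$ root a common dual graph, I would choose $(i,j)$ minimizing $j-i$; let $\alpha$ be the common dual graph and $v$ its vertex, so $w_i,w_j\in G_v$ by \Cref{lem:single_hyperplane}. The claim is that every $w_k$ with $i<k<j$ has vertex in $\Star(v)$, which exhibits the failure of the condition. I fix an embedded arc $P\subseteq\alpha$ joining the roots $w_i$ and $w_j$; as a properly embedded arc it cuts $\dd$ into two disks, and I take $\dd'$ to be the one whose boundary arc runs through $w_{i+1},\dots,w_{j-1}$. If some intermediate $w_k$ had vertex outside $\Star(v)$, then its dual graph $\beta$ is neither equal nor transverse to $\alpha$, hence disjoint from $\alpha$ and in particular from $P$; being connected and meeting $\partial\dd$ at $w_k\in\overline{\dd'}$, it is confined to $\overline{\dd'}$, so all of its roots lie on the arc through $w_{i+1},\dots,w_{j-1}$. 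Since $\beta$ has at least two roots, it roots at a second edge $w_{k'}$ with $i<k'<j$, producing a pair of gap smaller than $j-i$ and contradicting minimality.

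The main obstacle is exactly this separation step: justifying that a dual graph disjoint from $\alpha$ and touching the boundary inside the arc cut off by $P$ cannot escape $\overline{\dd'}$, so its remaining roots are forced into the interval $(i,j)$. This rests on the recorded fact that distinct dual graphs meet only when their associated vertices are adjacent, together with the Jordan-type separation of the disk by the embedded arc $P$. A minor subtlety is that dual graphs in the prism complex may branch at triangles, which is why I would work with a single embedded arc $P$ inside the connected graph $\alpha$ rather than reasoning about the separation properties of $\alpha$ as a whole; everything else is bookkeeping with the direct-product structure of star subgroups and the two disk-diagram lemmas.
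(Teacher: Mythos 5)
Your proof is correct and follows essentially the same route as the paper's: the easy direction by commuting letters within $G_{\Star(v)}$ to shorten the word, and the substantive direction via a disk diagram for $w\bar u$, \Cref{lem:rooting} and \Cref{lem:single_hyperplane}, a minimal-gap pair of roots of a common dual graph, and the observation that a dual graph whose vertex lies outside $\Star(v)$ cannot cross $\alpha$ and is therefore trapped between the two roots, contradicting minimality. Your explicit use of an embedded arc $P\subseteq\alpha$ just makes precise the separation step that the paper leaves implicit.
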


\begin{proof}
    Take a prism word $w=w_1\cdots w_n$ admitting letters $w_i$ and $w_j$ which belong to the same vertex group $G_v$, and such that all the intermediate letters $w_k$ with  $k\in\{i+1,\ldots,j-1\}$ belong to vertex groups in $G_{\Star(v)}$. Hence, by construction $w_i$ commutes with $w_{i+1}\cdots w_{j-1}$. Treating $w_iw_j$ as one generator, the prism word $w_1\cdots w_{i-1}w_{i+1}\cdots w_{j-1}(w_iw_j)w_{j+1}\cdots w_n$ is a prism word of length at most $n-1$ representing the same group element as $w$, which is therefore not a prism-geodesic word.

    Now suppose that $w$ satisfies the latter condition of the lemma, and take $u$ to be a prism-geodesic word representing the same group element. Fix $\mathcal{D}$ a disk diagram for $w\overline{u}$. By \Cref{lem:rooting} we know that any graph dual to a letter in $u$ has only one root in $u$. If the same is also true for $w$, then every dual graph in $\mathcal{D}$ has exactly one root in $w$ and one in $u$, meaning that $w$ and $u$ have the same prism length, which would tell us that $w$ is geodesic.
    
    Assume for contradiction that there are two distinct letters $a$ and $b$ of $w$ dual to the same dual graph $\alpha$ (in particular they belong to the same vertex group $G_v$);
    we can choose $a$ and $b$ such that the distance between $a$ and $b$ in $w$ is the shortest distance relative to pairs of letters coming from dual graphs with at least two roots in $w$.
    Since $a$ and $b$ belong to the same vertex group, our assumption on $w$ implies that there must be a letter $c$ between $a$ and $b$ such that the vertex of $\Gamma$ associated to $c$ is not in $\Star(v)$.
    \Cref{lem:single_hyperplane} ensures that the graph dual to $c$ has a second root in the boundary of the diagram. By the minimality of $a$ and $b$, this second root cannot be between $a$ and $b$. This implies that the graph dual to $c$ crosses $\alpha$, meaning $c$ belongs to a vertex group whose associated vertex is adjacent to $v$, contradicting that the associated vertex is not in $\Star(v)$.
\end{proof}

\subsection{Combing}
\label{sec:combing}

Our goal in this section is to construct disk diagrams that have certain favorable properties, with respect to some geodesic subword $w$ appearing backwards along their boundary. (Recall that we read $w$ along the opposite orientation to fit our typical use of boundary words $g\overline{w}$ starting with some non-geodesic part $g$.) The ability to construct such disk diagrams will be critical to the proof of Theorem \ref{thm:join-busting}.

Let $g=g_1\cdots g_n$ be a prism word and $w=w_1\cdots w_m$ a prism-geodesic representative for $g$ (so both $g_j$ and $w_i$ are in $S_p$). Given a disk diagram $\mathcal{D}$ for $g\overline{w}$, we define the \emph{beginning function} (with respect to $w$) $b^\mathcal{D}: \{1,\ldots,m\}\to\{1,\ldots,n \}$ and the \emph{ending function} (with respect to $w$) $e^\mathcal{D}: \{1,\ldots,m\}\to\{1,\ldots,n \}$ which associate to each index $i\in \{1,\ldots,m\}$ the index $j\in \{1,\ldots,n \}$ of the first, and respectively last, letters of $g$ which are dual to the same dual graph as $w_i$ in $\mathcal{D}$.  In order to use beginning and ending functions flexibly, i.e. for any geodesic subword of a disk diagram boundary, we observe the following:

\begin{fact} \label{fact:Cyclic boundary}
    A disk diagram $\dd$ with boundary word $s$ is also a disk diagram for the boundary word $s'$ where $s'$ is any cyclic permutation of $s$.
\end{fact}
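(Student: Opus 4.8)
The plan is to observe that a disk diagram is a piece of combinatorial-topological data—a tessellated disk $\dd$ together with a boundary segmentation, an interior dual-graph marking, and a map $\phi$ to a van Kampen diagram—none of which records a choice of basepoint on the boundary circle. The boundary word $s$ is extracted from this data only after one fixes a starting vertex on $\partial\dd$ and reads the segment labels clockwise. A cyclic permutation $s'$ of $s$ is precisely what one reads by starting at a different vertex, so I expect the same disk $\dd$, with the same map $\phi$, to witness the statement with no modification at all.

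Concretely, I would first recall from the discussion preceding \Cref{prop:disk_diagram_existence} that $\partial\dd$ is a genuine topological circle, cellularly subdivided into the segments carrying the letters $s_1,\dots,s_k$. Writing $s=s_1\cdots s_k$ for the clockwise reading from a basepoint $p_0$, a cyclic permutation has the form $s'=s_{j+1}\cdots s_k s_1\cdots s_j$, which is exactly the clockwise reading from the vertex $p_j$ separating segment $j$ from segment $j+1$. I would then note that the van Kampen diagram $\vkd$ for $s$ used to produce $\phi$ is simultaneously a van Kampen diagram for $s'$: this is the classical fact that the defining boundary word of a van Kampen diagram is only well-defined up to cyclic permutation (cf.\ \cite{LS}), again because its boundary circuit is a closed loop with no preferred starting point.

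With $\vkd$ and $\phi$ unchanged, I would verify that every requirement is still met for $s'$. Condition (a) of \Cref{prop:disk_diagram_existence} asks that $\phi(\partial\dd)$ be the boundary circuit of $\vkd$ read off in the order of the letters; since this circuit is a loop, reading it starting from $p_j$ yields exactly $s'$, so (a) holds. Conditions (b) and (c), together with the interior marking by pre-images of geometric hyperplanes from the disk diagram definition, make no reference to the boundary word or its starting point, and so are preserved verbatim. The only point requiring any care is the bookkeeping in (a)—confirming that the shifted reading genuinely spells $s'$ and not some other reordering—but this is immediate once one uses that $\partial\dd$ is a topological circle, so I do not anticipate a substantive obstacle.
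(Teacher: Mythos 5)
Your argument is correct and is exactly the reasoning the paper has in mind: the paper states this as a \emph{Fact} with no written proof, treating it as an immediate consequence of the definition, since the boundary circle of $\dd$ (and of the underlying van Kampen diagram) has no preferred basepoint and conditions (b) and (c) of \Cref{prop:disk_diagram_existence} never reference one. Your elaboration via re-reading the boundary circuit from a different vertex is precisely the intended observation, so there is nothing to add.
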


\begin{defn}
    Let $\overline{w}$ be a prism-geodesic subword of the boundary word for a disk diagram $\mathcal{D}$, so that after cyclic permutation this boundary word is $g\overline{w}$. Let $b^\dd$ and $e^\dd$ be the beginning and ending functions of $\dd$ with respect to $w$. We say $\mathcal{D}$ is \emph{$w$--left-combed} if, for any $1 \leq i < j \leq m$, $b^\mathcal{D}(i)<b^\mathcal{D}(j)$; likewise $\mathcal{D}$ is \emph{$w$--right-combed} if for any $i<j$, $e^\mathcal{D}(i)<e^\mathcal{D}(j)$ as illustrated in \Cref{fig:w-combed}.
\end{defn}

  \begin{figure}[!h] 
\begin{subfigure}[c]{.32\textwidth}
  \centering
  \includegraphics[scale=0.4]{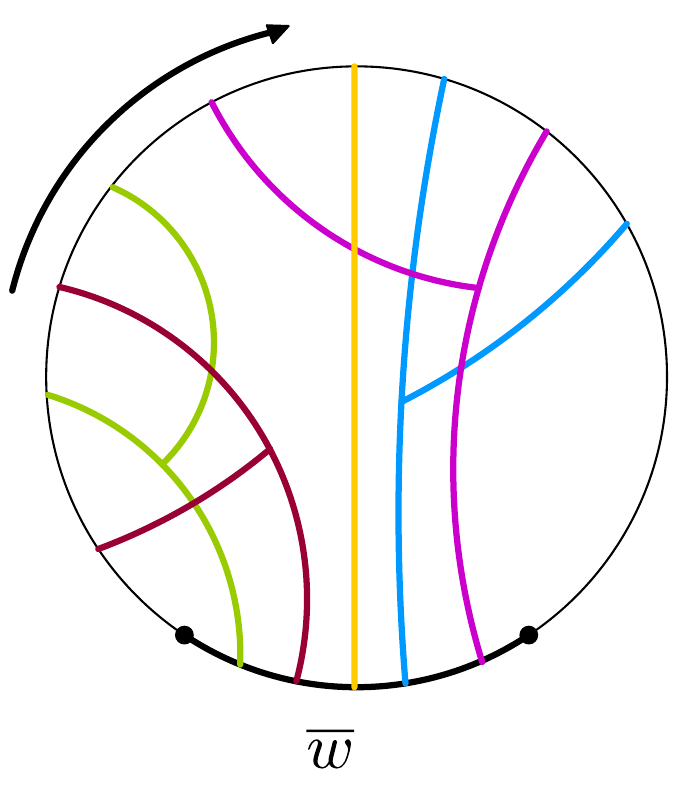}
  \caption{Not combed }
  \label{fig:NotCombed}
\end{subfigure}%
\hfill
\begin{subfigure}[c]{.32\textwidth}
  \centering
  \includegraphics[scale=0.4]{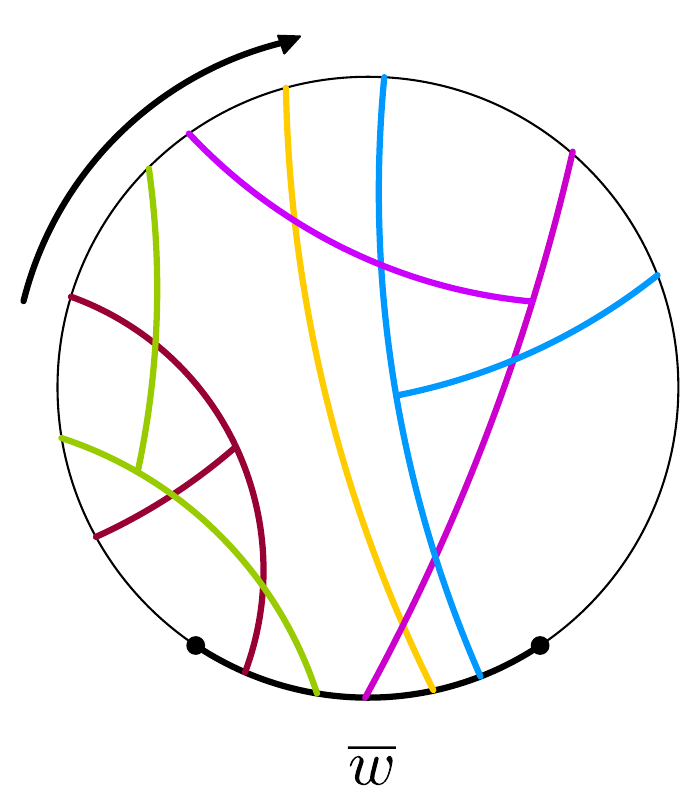}
  \caption{$w$-left-combed}
  \label{fig:LeftCombed}
\end{subfigure}%
\hfill
\begin{subfigure}[c]{.32\textwidth}
  \centering
  \includegraphics[scale=0.4]{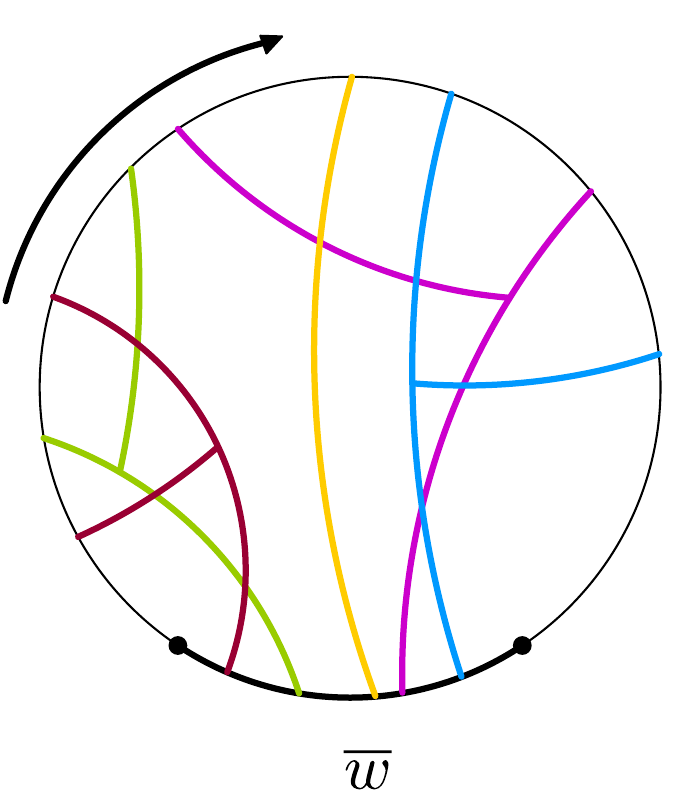}
  \caption{$w$-right-combed}
  \label{fig:RightCombed}
\end{subfigure}
\caption{Example of combed disk diagrams.} 
\label{fig:w-combed}
\end{figure}

For the proof of \Cref{thm:join-busting} we should notice that these definitions imply:

\begin{fact}\label{lem:combing_subsubwords}If $w'$ is a subword of a geodesic word $w$ along the boundary of a $w$--left-combed (or $w$--right-combed) disk diagram $\dd$, then $\dd$ is also $w'$--left-combed (or $w'$--right-combed, respectively).  
\end{fact}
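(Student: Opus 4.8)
The plan is to unwind the definitions and reduce the statement to the observation that passing from $w$ to a consecutive subword $w'$ only reindexes the complementary boundary word by an order-preserving shift, so that strict monotonicity of the beginning (resp.\ ending) function is inherited automatically.

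First I would fix the consecutive subword $w'=w_{i_0}\cdots w_{j_0}$ and write $w=w_{\mathrm{pre}}\,w'\,w_{\mathrm{suf}}$ with $w_{\mathrm{pre}}=w_1\cdots w_{i_0-1}$ and $w_{\mathrm{suf}}=w_{j_0+1}\cdots w_m$. Since $w$ is a prism geodesic, so is $w'$: a consecutive subword of a geodesic is again geodesic, which is immediate from the characterization in \Cref{lem:charac_geod}. Thus $w'$ is a legitimate geodesic subword of $\partial\dd$ to which the combing definitions apply. Next, using \Cref{fact:Cyclic boundary}, I would rewrite the boundary word as $g\overline{w}=g\,\overline{w_{\mathrm{suf}}}\,\overline{w'}\,\overline{w_{\mathrm{pre}}}$ and cyclically permute it to the form $g'\overline{w'}$ with $g'=\overline{w_{\mathrm{pre}}}\,g\,\overline{w_{\mathrm{suf}}}$. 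This exhibits $\dd$ as a disk diagram in which $w'$ occupies the standard position, with complementary word $g'$.

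The key step is to control the roots. For an index $i$ with $i_0\le i\le j_0$, let $\alpha_i$ be the dual graph through $w_i$. By \Cref{lem:rooting}, since $w$ is geodesic, $\alpha_i$ roots in at most one edge of $w$; that edge is $w_i\in w'$, so $\alpha_i$ has \emph{no} roots on $w_{\mathrm{pre}}$ or on $w_{\mathrm{suf}}$. Consequently every root of $\alpha_i$ lying on $g'$ actually lies in the middle $g$-block of $g'=\overline{w_{\mathrm{pre}}}\,g\,\overline{w_{\mathrm{suf}}}$, and the internal order of that block agrees with the order on $g$ up to the constant shift $|w_{\mathrm{pre}}|$. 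Hence the beginning function of $\dd$ with respect to $w'$ is exactly the restriction to $\{i_0,\dots,j_0\}$ of the beginning function with respect to $w$, translated by $|w_{\mathrm{pre}}|$ (and likewise for the ending function). Since a constant translation preserves strict inequalities, monotonicity of $b^\dd$ (resp.\ $e^\dd$) on all of $\{1,\dots,m\}$ restricts to monotonicity on the subinterval, giving that $\dd$ is $w'$--left-combed (resp.\ $w'$--right-combed).

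I expect the only genuine subtlety to be the bookkeeping in this last step: one must invoke \Cref{lem:rooting} precisely to rule out roots of $\alpha_i$ on the discarded pieces $w_{\mathrm{pre}}$ and $w_{\mathrm{suf}}$. Without that, the first or last root of $\alpha_i$ as seen from $w'$ could differ from the one seen from $w$, and the induced order could a priori be scrambled. Once that single-root property is in hand, the remainder is a direct index shift and requires no further computation.
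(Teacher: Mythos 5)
Your argument is correct and is exactly the definitional unwinding the paper has in mind: the paper records this statement as a Fact with no written proof, asserting that it follows directly from the definitions of the beginning and ending functions. Your identification of \Cref{lem:rooting} as the one substantive point --- guaranteeing that graphs dual to letters of $w'$ have no roots on the discarded pieces $w_{\mathrm{pre}}$ and $w_{\mathrm{suf}}$, so that both functions merely shift by the constant $|w_{\mathrm{pre}}|$ --- is precisely the check the paper leaves implicit.
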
 
    
Before establishing the existence of left/right-combed disk diagrams (\Cref{lem:Combing} below), we record an elementary fact about disk diagrams and commutation.  Given a prism word $s$ that is equal to the identity, and a disk diagram $\mathcal{D}$ for $s$, if $a$ and $b$ are two successive letters in $s$ which belong to two adjacent vertex groups, then we define \emph{the commuting operation} on $\mathcal{D}$ as follow : If $a$ and $b$ belong to two adjacent vertex groups, then the square with sides labeled with $a,b,\bar a,\bar b$ is a Van Kampen diagram for $aba^{-1}b^{-1}$ that can directly be seen as a disk diagram. The commuting operation consists of gluing $\mathcal{D}$ to this square along $ab$, as in \Cref{lem:disk_concatenation}. This yields a disk diagram for the same word as $\mathcal{D}$ up to commutating $a$ and $b$.

In a certain sense, the commuting operation does not change beginning and ending functions.  To make this precise, recall that by \Cref{lem:finite_geodesics}, any two prism geodesics for $g$ differ only by the order of their letters.  Given such a prism geodesic $w=w_1\cdots w_m$, and a permutation $\sigma$ of $\{1,\ldots, m\}$ such that $w'=w_{\sigma(1)}\cdots w_{\sigma(m)}$ is also a prism-geodesic representative for $g$, we say that a disk-diagram $\dd$ for $g \overline{w}$ and a disk-diagram $\dd'$ for $g\overline{w}'$ have \emph{the same} beginning function (resp. ending function) if for all $i$, $b^{\dd'}(i)=b^{\dd}(\sigma(i))$ (resp. $e^{\dd'}(i)=e^{\dd}(\sigma(i))$) 

\begin{lem}\label{lem:commuting}
    Let $g$ be a prism word and $w$ a prism-geodesic representative. Given a disk diagram $\mathcal{D}$ for $g\overline{w}$, any commuting operation applied to pairs of adjacent letters of $w$ produces a disk-diagram with the same beginning and ending functions as $\dd$.
\end{lem}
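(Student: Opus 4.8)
The plan is to unwind the commuting operation at the level of dual graphs and to observe that it only modifies the two dual graphs rooted at the commuted letters, without disturbing any of their roots in the $g$-part of the boundary. It suffices to treat a single commuting operation, since a general one is a composition of such moves and the relation $b^{\dd'}(i)=b^{\dd}(\sigma(i))$ composes accordingly; so fix $1\le i<m$ and suppose the operation transposes the adjacent letters $w_i$ and $w_{i+1}$, which by hypothesis lie in adjacent vertex groups. Writing $\sigma$ for the transposition $(i,i+1)$, the resulting word $w'=w_1\cdots w_{i-1}w_{i+1}w_iw_{i+2}\cdots w_m$ represents the same element $g$ and has the same length, so by \Cref{lem:finite_geodesics} it is again a prism geodesic, and $w'=w_{\sigma(1)}\cdots w_{\sigma(m)}$.

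First I would make the operation explicit on the boundary. In the boundary word $g\overline{w}$ the letters $w_{i+1}^{-1}$ and $w_i^{-1}$ appear consecutively (in that order); the commuting operation glues, via \Cref{lem:disk_concatenation}, the single square $Q$ dual to the relation $w_iw_{i+1}=w_{i+1}w_i$ along this pair of edges, producing a disk diagram $\dd'$ whose boundary word is $g\overline{w'}$. The square $Q$ contains exactly two dual graphs, one rooted in its two $w_i$-edges and one in its two $w_{i+1}$-edges, and these cross transversally inside $Q$.

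Next I would track the dual graphs. Because $w$ is geodesic, \Cref{lem:rooting} and \Cref{lem:single_hyperplane} tell us that each $w_k$ is a root of a dual graph $\alpha_k$ of $\dd$ whose remaining roots all lie in $g$; thus $b^{\dd}(k)$ and $e^{\dd}(k)$ are the first and last $g$-indices rooting $\alpha_k$, and are well defined. By \Cref{fact:concatenate-dual-graph}, the dual graphs of the concatenation $\dd'$ are the dual graphs of $\dd$ and of $Q$ glued along the identified edges. Since the only edges of $\dd$ lying on the gluing locus are the roots $w_{i+1}^{-1}$ and $w_i^{-1}$, the only dual graphs of $\dd$ that get extended are $\alpha_i$ and $\alpha_{i+1}$: each is prolonged across $Q$ by one of the two hyperplanes of $Q$, which moves its boundary root to the transposed position while leaving its roots in $g$ exactly as before. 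Every other dual graph $\alpha_k$ with $k\neq i,i+1$ does not meet $Q$ and is unchanged, as are all roots in the $g$-part of $\partial\dd$.

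The conclusion then follows by bookkeeping. For $k\neq i,i+1$ the letter $w'_k=w_k$ is dual in $\dd'$ to the unchanged graph $\alpha_k=\alpha_{\sigma(k)}$, so $b^{\dd'}(k)=b^{\dd}(k)=b^{\dd}(\sigma(k))$ and likewise for $e$. For $k=i$ the letter $w'_i=w_{i+1}$ is dual in $\dd'$ to the prolonged graph $\alpha_{i+1}$, whose set of $g$-roots is unchanged, so $b^{\dd'}(i)=b^{\dd}(i+1)=b^{\dd}(\sigma(i))$ and $e^{\dd'}(i)=e^{\dd}(i+1)=e^{\dd}(\sigma(i))$; the case $k=i+1$ is symmetric using $\alpha_i$. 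This is exactly the asserted equality of the beginning and ending functions. The main point to get right is the last step's identification of which prolonged dual graph is rooted at which transposed index, i.e.\ checking that the hyperplane of $Q$ carries $\alpha_{i+1}$ to the position now occupied by $w'_i$; everything else is a direct consequence of \Cref{fact:concatenate-dual-graph} together with the fact that $Q$ is glued entirely on the $w$-side of the boundary.
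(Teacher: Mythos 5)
Your proof is correct and follows the same route as the paper, which simply observes that dual graphs of a concatenation are concatenations of dual graphs (\Cref{fact:concatenate-dual-graph}) and leaves the bookkeeping implicit; you have just written out that bookkeeping in full, correctly identifying that only the two graphs rooted at the commuted letters are prolonged across the glued square and that their roots in $g$ are untouched.
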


\begin{proof}
    This is direct by construction. Indeed, as noted previously, when constructing a disk diagram by gluing, the new dual graphs are concatenations of dual graphs coming from the initial disk diagrams.
\end{proof}

\begin{lem}[Combing]
    \label{lem:Combing} 
    Given a prism word $g$, a prism-geodesic representative $w$ for $g$, and a disk-diagram $\dd$ for $g\overline{w}$, there exists a reordering of the letters of $w$ into a prism representative $w^{(l)}$ (resp. $w^{(r)}$) of $g$ and a $w^{(l)}$-left-combed disk-diagram $\mathcal{D}_l$ for $g\overline{w}^{(l)}$ (resp. a $w^{(r)}$-right-combed disk-diagram $\mathcal{D}_r$ for $g\overline{w}^{(r)}$) with the same beginning and ending function as $\mathcal{D}$.
\end{lem}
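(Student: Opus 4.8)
The plan is to realize the desired reordering as the output of a bubble sort on the beginning function, where each elementary swap is implemented by the commuting operation. I will describe the $w$--left-combed case; the $w$--right-combed case is identical after replacing $b^\dd$ by $e^\dd$ throughout.

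First I would record that $b^\dd$ and $e^\dd$ are genuinely defined and that $b^\dd$ is injective. By \Cref{lem:single_hyperplane} every dual graph has at least two roots, and by \Cref{lem:rooting} a dual graph meeting the geodesic $w$ has exactly one root there; hence each $w_i$ lies on a distinct dual graph $\alpha_i$ that must also have at least one root among the letters of $g$, so $b^\dd(i)$ and $e^\dd(i)$ make sense. Since a letter $g_j$ is a root of exactly one dual graph, distinct $\alpha_i$ have distinct first $g$--roots, so $b^\dd$ is an injection into $\{1,\dots,n\}$; being $w$--left-combed is then equivalent to $b^\dd$ being strictly increasing.

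The heart of the argument is the following claim: if $b^\dd(i)>b^\dd(i+1)$, then $w_i$ and $w_{i+1}$ lie in adjacent, hence commuting, vertex groups. To prove it I would read $\partial\dd$ in the standard clockwise convention $g\overline{w}$, so that the $g$--roots occur in the cyclic order $g_1,\dots,g_n$ while the $w$--roots $[w_1],\dots,[w_m]$ occur in the reversed order $[w_m],\dots,[w_1]$ along the complementary arc. Writing $p=b^\dd(i)$ and $q=b^\dd(i+1)$, the hypothesis $p>q$ forces the four boundary points into the cyclic order $g_q,\,g_p,\,[w_{i+1}],\,[w_i]$; in particular the pair $\{g_p,[w_i]\}$ realized by $\alpha_i$ interleaves with the pair $\{g_q,[w_{i+1}]\}$ realized by $\alpha_{i+1}$. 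An arc of $\alpha_{i+1}$ joining $g_q$ to $[w_{i+1}]$ is then a Jordan arc in the disk separating $g_p$ from $[w_i]$, so the connected graph $\alpha_i$, which meets the boundary at both of these separated points, must cross it. Thus $\alpha_i$ and $\alpha_{i+1}$ intersect; since distinct dual graphs can intersect only when their associated vertices span an edge of $\Gamma$, the vertices $v_i,v_{i+1}$ are adjacent and $w_i,w_{i+1}$ commute. This crossing step, essentially a careful use of the Jordan curve theorem inside the disk, is where I expect the only genuine difficulty.

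With the claim in hand the rest is bookkeeping. Whenever $b^\dd$ is not increasing there is an adjacent inversion $b^\dd(i)>b^\dd(i+1)$; by the claim $w_i$ and $w_{i+1}$ commute, so the commuting operation applies, and by \Cref{lem:commuting} the resulting disk diagram for $g$ over the word with $w_i,w_{i+1}$ transposed has beginning and ending functions obtained from those of $\dd$ by the transposition $(i\ i{+}1)$. Each such move strictly decreases the finite number of inversions of $b^\dd$ by exactly one, so after finitely many moves we obtain a reordering $w^{(l)}$ of $w$ and a disk diagram $\mathcal{D}_l$ for $g\overline{w}^{(l)}$ whose beginning function is strictly increasing, i.e.\ $\mathcal{D}_l$ is $w^{(l)}$--left-combed. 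Because every move preserves both functions in the sense of \Cref{lem:commuting}, composing the transpositions shows $\mathcal{D}_l$ has the same beginning and ending functions as $\dd$, as required. Running the same procedure on $e^\dd$ (the inequality $e^\dd(i)>e^\dd(i+1)$ again produces the interleaved cyclic order $g_{q'},g_{p'},[w_{i+1}],[w_i]$) yields the $w^{(r)}$--right-combed diagram $\mathcal{D}_r$.
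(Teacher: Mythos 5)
Your proposal is correct and follows essentially the same route as the paper's proof: an adjacent inversion of the beginning (resp.\ ending) function forces the two dual graphs to cross, hence the letters commute, and bubble sort via the commuting operation together with \Cref{lem:commuting} finishes the argument. The only difference is that you spell out, via the interleaving/Jordan-arc argument, the crossing claim that the paper simply asserts; that added detail is accurate and consistent with the clockwise convention for $g\overline{w}$.
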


\begin{proof}
    Given $g$, we prove the lemma for the left-combed representative, as right-combing follows the same idea. 
    Start with any prism geodesic $w= w_1\cdots w_m$ for $g$, and fix a disk diagram $\mathcal{D}$ for the word $g\overline{w}$. For $1\leq i \leq n-1$, if $b^\mathcal{D}(i)>b^\mathcal{D}(i+1)$ then the graphs dual to $w_i$ and $w_{i+1}$ cross in $\mathcal{D}$, meaning that $w_i$ and $w_{i+1}$ are in adjacent vertex groups, and therefore commute. Hence $w_1\cdots w_{i-1}w_{i+1}w_iw_{i+2}\cdots w_m$ is still a geodesic word for $g$. \Cref{lem:commuting} ensures that the commuting operation produces a disk-diagram for $g\overline{w_1\cdots w_{i-1}w_{i+1}w_iw_{i+2}\cdots w_m}$ with the same ending and beginning functions as $\mathcal{D}$. The bubble sort algorithm can then be applied to conclude the proof.
\end{proof}

\begin{rem}
    Although we will not need it, it is worth noticing that the combing process does more than leaving the beginning and ending functions unchanged. If $w^{(l)}$ differs from $w$ by a permutation $\sigma$ of the letters of $w$, then for any $i$, $w^{(l)}_{i}$ is dual in $\dd^{(l)}$ to exactly the same letters of $g$ as $w_{\sigma(i)}$ in $\dd$.
\end{rem}

We record here an application of combing to geodesic representatives of words in a subgroup $H$, which provides quantitative control which will be important for the proof of Theorem \ref{thm:join-busting}.

\begin{lem}[Reduction control]\label{lem:RedCon}
Let $H$ be a finitely generated subgroup of $\gp$ with finite generating set $S_H$, and let $h = h_1\cdots h_n$ be an $S_H$--geodesic. Write each $h_i$ as an $S_p$--geodesic, so that $h$ also appears as a prism word. Let $s$ be an $S_p$--geodesic representative for $h$, and let $w=w_1\cdots w_m$ be a subword of $s$. Consider a $w$-left-combed disk diagram $\dd$ for the prism word $h\overline{s}$, and let $b^\dd$ be the beginning function of $\dd$ with respect to $w$.  Given $1 \leq \ell < r \leq m$, let $L$ and $R$ be the indices of the subwords $h_i$ of $h$ containing the $S_p$--letters indexed by $b^{\dd}(\ell)$ and $b^{\dd}(r)$ respectively. Then
    \begin{equation*}
        R-L\geqslant \frac{r-l}{\mH}, 
    \end{equation*}
    where $\mH$ is the maximal $S_p$--length for elements in $S_H$.
\end{lem}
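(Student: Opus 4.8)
The plan is to use the left-combing hypothesis to make the beginning function strictly monotone, and then run a pigeonhole argument that converts a displacement measured in $S_p$--letters of the prism word $h$ into a displacement measured in the coarser blocks $h_1,\dots,h_n$, with the conversion factor controlled by $\mH$.

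First I would record the consequence of combing. Since $\dd$ is $w$--left-combed, $b^\dd$ is strictly increasing by the very definition, so for $\ell < r$ we have $b^\dd(\ell) < b^\dd(\ell+1) < \cdots < b^\dd(r)$. Hence the images of $\{\ell,\ell+1,\dots,r\}$ are $r-\ell+1$ distinct, increasing positions among the $S_p$--letters of the prism word $h$. Next I would isolate the only feature of the block decomposition that is used: each $h_i$ lies in $S_H$, so its prism-geodesic spelling occupies at most $|h_i|_p \le \mH$ consecutive $S_p$--positions of $h$. Because $b^\dd$ is injective, at most $\mH$ of the indices $i \in \{\ell,\dots,r\}$ can have $b^\dd(i)$ falling inside any single block.

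Now, since $b^\dd$ is increasing and the blocks are consecutive intervals of positions, the block index of $b^\dd(i)$ is non-decreasing in $i$, so the positions $b^\dd(\ell),\dots,b^\dd(r)$ are distributed among the $R-L+1$ blocks indexed $L,L+1,\dots,R$. Pigeonhole then gives $r-\ell+1 \le (R-L+1)\,\mH$, and rearranging yields the claimed lower bound $R-L \ge (r-\ell)/\mH$.

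The routine parts here are the monotonicity, which is immediate from the definition of $w$--left-combed, and the pigeonhole itself. The step requiring the most care is purely bookkeeping: keeping straight that $b^\dd$ takes values in the \emph{fine} index set $\{1,\dots,\sum_i |h_i|_p\}$ of $S_p$--letters of $h$, whereas $L$ and $R$ live in the \emph{coarse} index set $\{1,\dots,n\}$ of blocks, and that the combing is taken with respect to exactly the subword $w$, so that $b^\dd$ is monotone on all of $\{1,\dots,m\}$ and in particular on $\{\ell,\dots,r\}$. Once these conventions are fixed, the bound $|h_i|_p \le \mH$ on each block's prism length is precisely what makes the per-block count go through.
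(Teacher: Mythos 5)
Your proof is correct and follows essentially the same route as the paper's: strict monotonicity of $b^\dd$ from left-combing, followed by a pigeonhole count of at most $\mH$ prism letters per block $h_i$, giving $r-\ell+1\le (R-L+1)\mH$. The final rearrangement to $R-L\ge (r-\ell)/\mH$ carries the same harmless off-by-one slack as the paper's own last line (strictly one only gets $R-L\ge (r-\ell+1)/\mH-1$), which is immaterial to how the lemma is applied.
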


\begin{proof}
    First note that, by left-combing, for any $\ell<k<r$ we have that $b^\dd(l)< b^\dd(k)<b^\dd(r)$.  Each graph dual to $w_k$ has its first root on some edge of a subword $h_i$ for $L\leq i \leq R$, and each such subword has at most $\mH$ edges; two of these $(R-L+1)\cdot\mH$ total possibilities are already dual to $w_\ell$ and $w_r$.  Therefore
    \begin{equation*}
        r-\ell-1\leq(R-L+1)\mH - 2 < (R-L+1)\mH - 1.
    \end{equation*}
\end{proof}

\subsection{Disk diagrams for almost star-free subgroups}
\label{sec:star-free}

We now apply disk diagrams to study finitely generated almost star-free subgroups of $\gp$. Given such a subgroup $H$ together with a finite generating set $S_H$, we always consider elements of $S_H$ as represented by $S_p$--geodesics. Hence if $h$ is an element of $H$ written as an $S_H$--geodesic $h_1\cdots h_n$, then it is also represented by a prism word, and if $w$ is a prism-geodesic representative for $h$, then a disk diagram for $h\overline{w}$ means a disk diagram for the associated prism word obtained by replacing the letters $h_i$ of $h$ by their $S_p$--geodesic representatives.  Recall from \Cref{lem:RedCon} that we denote by $\mH$ the maximum prism-length of an element in $S_H$.

The first of this series of lemmas gives a quantitative control on how $S_H$--geodesic words reduce to $S_p$--geodesic words.  It is adapted from Lemma 4.1 in \cite{RAAGstable}, a right-angled Artin group counterpart bounding ``cancellation diameter''; in RAAG disk diagrams, the analogue for dual graphs are arcs that represent the cancellation of a generator with its inverse, where these are letters from the (finite) standard generating set for the RAAG.  In our setting of graph products, dual graphs represent the combining of prism generators from the same vertex group to yield the identity (as observed in \Cref{lem:single_hyperplane}).

\begin{lem}[Bounded combination diameter]\label{lem:bounded-cancellation-diameter}
    Suppose $H$ is a finitely generated almost star-free subgroup of a graph product $\gp$, and let $S_H$ be a finite generating set of $H$.  Then there exists a constant $D=D(S_H)$ with the following property:  Suppose $h\in H$ is an $S_H$--geodesic word $h=h_1\cdots h_n$ equivalent to the $S_p$--geodesic word $w$.  Then in any disk diagram for the identity word $h\overline{w}$, if a dual graph has roots in $h_i$ and $h_j$, then $|i-j|<D$.
\end{lem}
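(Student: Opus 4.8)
The plan is to show that any dual graph rooting far apart forces an element of $H$ to lie in a conjugate of a star subgroup, and then to use the almost star-free hypothesis to bound the $S_H$--length of such an element. Start with the algebra of a single dual graph. Suppose $\alpha$ is a dual graph associated to a vertex $v\in V(\G)$, with roots in $h_i$ and $h_j$ for some $i<j$; by \Cref{lem:rooting} each of $h_i,h_j$ (being $S_p$--geodesic) contains at most one root of $\alpha$, so let the root in $h_i$ be the $S_p$--letter $a$ and the one in $h_j$ be $b$, and split $h_i=h_i^- a h_i^+$ and $h_j=h_j^- b h_j^+$. Reading clockwise, the boundary subword of $\dd$ strictly between $a$ and $b$ is $c=h_i^+ h_{i+1}\cdots h_{j-1} h_j^-$, and the key geometric claim is that $c\in G_{\Star(v)}$ as a group element. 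To prove this I would list, in order along the arc from $a$ to $b$, all roots $a=r_0,r_1,\dots,r_q=b$ of $\alpha$ on this arc; by \Cref{lem:rooting} the at-most-one root of $\alpha$ on the geodesic side $\overline{w}$ cannot lie on this arc, so these are consecutive among all roots of $\alpha$. Exactly as in \Cref{lem:single_hyperplane}, between consecutive roots $r_t,r_{t+1}$ the boundary subword is group-equal to a word along the carrier of $\alpha$ that does not cross $\alpha$, hence to an element of $G_{\Link(v)}$; since each $r_t\in G_v$ and $G_v$ commutes with $G_{\Link(v)}$, collecting terms gives $c\in G_v\cdot G_{\Link(v)}=G_{\Star(v)}$.

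Next I would convert this into a bounded element of $H$. The element $z=h_{i+1}\cdots h_{j-1}$ is a subword of the $S_H$--geodesic $h$, so $z\in H$ with $|z|_{S_H}=j-i-1$, and the expression for $c$ gives $z=(h_i^+)^{-1}\,c\,(h_j^-)^{-1}$ with $c\in G_{\Star(v)}$. The crucial combinatorial observation is that $h_i^+$ is a suffix and $h_j^-$ a prefix of the fixed $S_p$--representatives of elements of $S_H$; as $S_H$ is finite there are only finitely many such prefixes and suffixes, so $t_1:=(h_i^+)^{-1}$ and $t_2:=(h_j^-)^{-1}$ range over a fixed finite set $\mathcal{T}$. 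Thus $z$ lies in $A_{t_1,t_2,v}:=H\cap\big(t_1\,G_{\Star(v)}\,t_2\big)$.

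Finally I would bound $A_{t_1,t_2,v}$ using almost star-freeness. For any $z,z'\in A_{t_1,t_2,v}$, writing $z=t_1 s t_2$ and $z'=t_1 s' t_2$ with $s,s'\in G_{\Star(v)}$ gives $z(z')^{-1}=t_1 s(s')^{-1}t_1^{-1}\in H\cap t_1 G_{\Star(v)}t_1^{-1}$, which is finite since $H$ is almost star-free and $t_1 G_{\Star(v)}t_1^{-1}$ is a conjugate of a star subgroup. Hence $A_{t_1,t_2,v}$ is contained in a single coset of this finite group, so it is finite and its elements have uniformly bounded $S_H$--length, say at most $D_{t_1,t_2,v}$ (taken to be $0$ if the set is empty). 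Setting $D:=2+\max_{t_1,t_2\in\mathcal{T},\,v\in V(\G)}D_{t_1,t_2,v}$, a finite maximum over finitely many choices, yields $j-i-1=|z|_{S_H}\leq D-2$, i.e. $|i-j|<D$, as required.

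The main obstacle is the first step, and it is exactly where the graph product setting departs from the right-angled Artin group argument of \cite{RAAGstable}: prism generators are words rather than single letters, so a hyperplane can be crossed in the interior of some $h_i$, leaving the ``partial'' pieces $h_i^+$ and $h_j^-$. The point of introducing the finite set $\mathcal{T}$ of prefixes and suffixes is to absorb these partial generators into the conjugating element, so that the almost star-free condition (which concerns conjugates of $G_{\Star(v)}$, not $G_{\Star(v)}$ itself) can still be applied to a genuine element of $H$.
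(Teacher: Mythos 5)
Your proof is correct and follows essentially the same route as the paper: both arguments identify the boundary subword between the two roots (up to a prefix/suffix of a fixed $S_H$--letter, drawn from a finite set) with a word along the carrier of the dual graph, hence with an element of $G_{\Star(v)}$, and both conclude by observing that the difference of two such elements of $H$ lands in a conjugate $t_1 G_{\Star(v)} t_1^{-1}$, which meets $H$ in a finite set. The only difference is packaging: you argue directly, bounding the finitely many sets $A_{t_1,t_2,v}$ and taking a maximum, whereas the paper runs the same computation as a proof by contradiction, extracting subsequences with constant $v$, $\sigma$, $\tau$ to produce infinitely many distinct elements of $H\cap \sigma^{-1}G_{\Star(v)}\sigma$.
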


\begin{proof}
    Let us suppose, by contradiction, that no such $D$ exists. This means that there exist sequences of $S_H$--geodesic and $S_p$--geodesic words
\begin{align*}
    h^{(k)} &= h_1^{(k)}\cdots h_{n(k)}^{(k)} &&\text{ where } h_i^{(k)}\in S_H \text{ and } n(k) \text{ is minimal, and} \\
    w^{(k)} &= h^{(k)} &&\text{ where }w^{(k)}\text{ is an $S_p$--geodesic};
\end{align*}
    along with disk diagrams $\mathcal{D}^{(k)}$ for the identity words $h^{(k)}\overline{w}^{(k)}$ (recall that each element of $S_H$ comes with a prism geodesic representative, allowing us to see $h^{(k)}$ as a prism word); and indices $i(k)<j(k)$ such that some dual graph $\alpha^{(k)}$ of $\mathcal{D}^{(k)}$ roots in letters of $h_{i(k)}^{(k)}$ and $h_{j(k)}^{(k)}$, where $j(k)-i(k)$ grows arbitrarily large.

\begin{figure}[!h]
    \centering
    \includegraphics[width=0.5\linewidth]{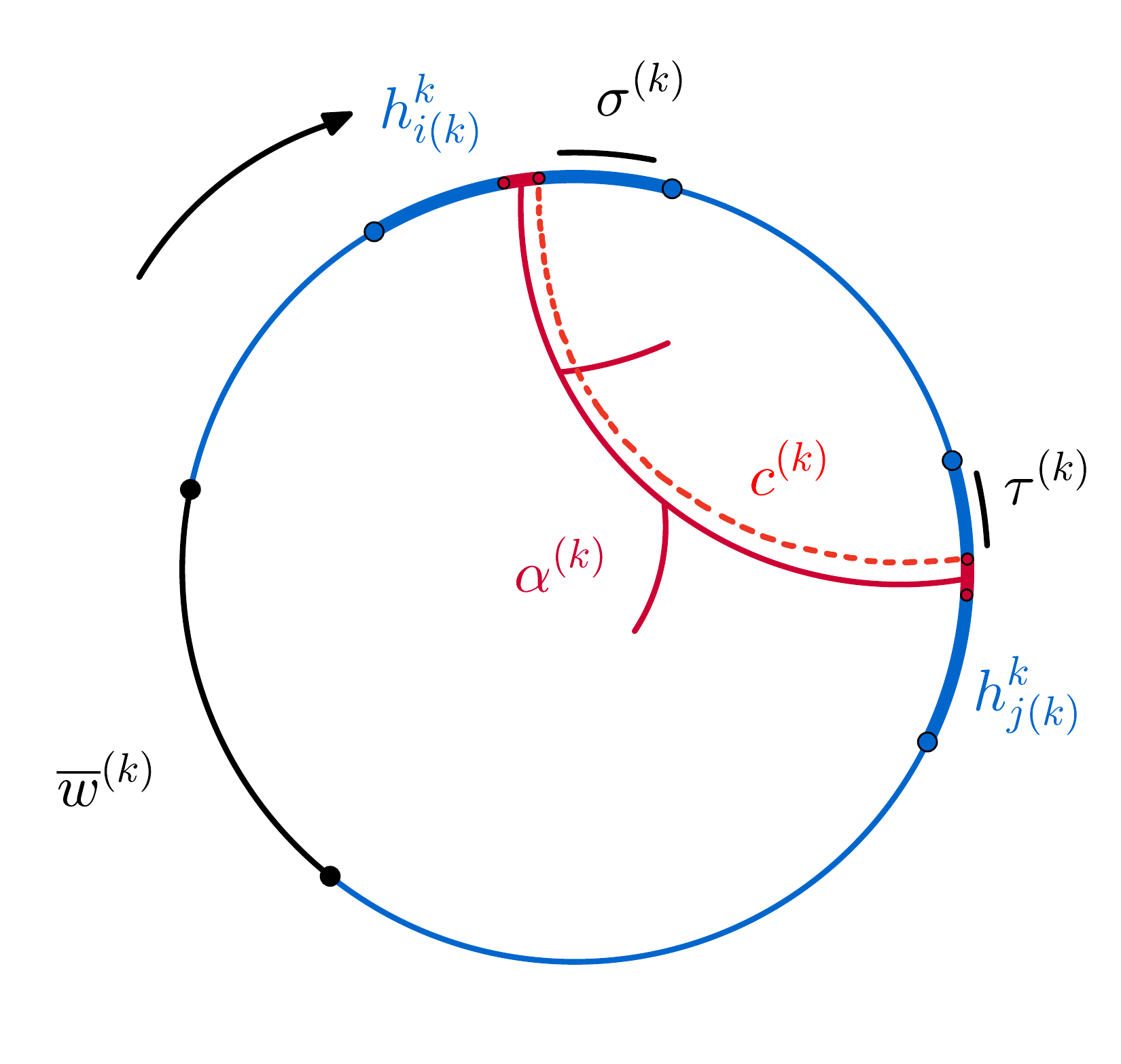}
    \caption{Decomposition of $h^{(k)}\overline{w}^{(k)}$}
    \label{fig:Lemma5.5}
\end{figure}

Let $v^{(k)}$ be the vertex corresponding to the dual graph $\alpha^{(k)}$.  Taking words along the carriers of each $\alpha^{(k)}$, connecting letters in $h_{i(k)}^{(k)}$ and $h_{j(k)}^{(k)}$ respectively, this sequence of disk diagrams implies the existence of a sequence of $c^{(k)} \in G_{\Star(v^{(k)})}$ that are equal to subwords of $h^{(k)}$ of the form
\begin{align*}
c^{(k)} &= \sigma^{(k)} h^{(k)}_{i(k)+1}\cdots h^{(k)}_{j(k)-1}\tau^{(k)}\text{, where}\\
\sigma^{(k)} &\text{ is a suffix of }h^{(k)}_{i(k)}\text{ and}\\
\tau^{(k)} &\text{ is a prefix of }h^{(k)}_{j(k)}\text{.}
\end{align*}
This construction is illustrated in Figure \ref{fig:Lemma5.5}.

Because $\G$ is finite, we may pass to a subsequence so that $c^{(k)}$ are all in the same $G_{\Star(v)}$.  Because Lemma \ref{lem:finite_geodesics} bounds the number of prism-geodesic ways to write the finitely many elements of $S_H$, we can pass to a further subsequence so that $\sigma^{(k)}$ and $\tau^{(k)}$ are also constant, equal to $\sigma$ and $\tau$ respectively.

In this subsequence, $c^{(k)} = \sigma h'^{(k)} \tau$ where $h'^{(k)} \in H$, and $|h'^{(k)}|_{S_H} = j(k) - i(k) - 1$ grows arbitrarily large. We can therefore pass to a further subsequence such that $|h'^{(k)}|_{S_H} \neq |h'^{(l)}|_{S_H}$ for all $k \neq l$, and so $h'^{(k)} \neq h'^{(l)}$ for all $k \neq l$. As a result, we have that \[h'^{(1)}(h'^{(k)})^{-1} = \sigma^{-1}c^{(1)}(c^{(k)})^{-1}\sigma \in \sigma^{-1}G_{\Star(v)}\sigma\] is an infinite sequence of distinct elements of $H$, so $H$ is not almost star-free.
\end{proof}

Like the previous lemma, the following lemma mirrors a right-angled Artin group version in \cite{RAAGstable}, this time of the same name.  This concerns \emph{vanishing subwords} for $S_H$--geodesic words $h$, defined as follows.  Suppose $h = h_1\cdots h_n$ where $h_i \in S_H$ are $S_p$--geodesic words and $n$ is minimal.  We say the subword $h_i\cdots h_j$ is \emph{vanishing} if there exists an $S_p$--geodesic word $w$ equal to $h$ with a disk diagram for $h\overline{w}$ where no dual graph rooted in $\overline{w}$ intersects $h_i \cdots h_j$.  In this sense, the subword $h_i\cdots h_j$ does not contribute to the letters in the prism-geodesic word $w$.

\begin{lem}[Bounded non-contribution]\label{lem:bounded-non-contribution}
    Suppose $H$ is a finitely generated almost star-free subgroup of a graph product $\gp$, and let $S_H$ be a finite generating set of $H$. Then there exists a constant $K = K(S_H)$ such that if $h_i \cdots h_j$ is a vanishing subword of $h \in H$ as in the definition above, then $j-i < K$.
\end{lem}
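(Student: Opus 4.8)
The plan is to argue by contradiction in direct parallel with the proof of \Cref{lem:bounded-cancellation-diameter}, producing from an unbounded family of vanishing subwords an infinite family of distinct elements of $H$ inside a single conjugate of a star subgroup. Concretely, suppose no such $K$ exists. Then there are $S_H$--geodesics $h^{(k)} = h^{(k)}_1\cdots h^{(k)}_{n(k)} \in H$ with $n(k)$ minimal, $S_p$--geodesics $w^{(k)}$ with $w^{(k)} =_{\gp} h^{(k)}$, and disk diagrams $\dd^{(k)}$ for $h^{(k)}\overline{w}^{(k)}$ in which no dual graph rooted in $\overline w^{(k)}$ meets the block $B^{(k)} := h^{(k)}_{i(k)}\cdots h^{(k)}_{j(k)}$, with $j(k) - i(k) \to \infty$. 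Writing $h^{(k)} = A^{(k)}B^{(k)}C^{(k)}$, I keep $k$ fixed and suppress it when describing a single diagram.

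The geometric input is \Cref{lem:bounded-cancellation-diameter}: with $D = D(S_H)$ the combination-diameter constant, every dual graph of $\dd$ has all its roots within a window of fewer than $D$ consecutive letters $h_\ell$. I first read off the consequences of this locality for a long block $B = h_i\cdots h_j$. A dual graph with a root in $A$ and a root in $C$ would span more than $j-i$ letters, so for $j-i \ge D$ no dual graph joins $A$ to $C$. Hence the surviving dual graphs, which root in $A\cup C$ together with a single edge of $\overline w$ and never in $B$, split $\overline w$ cleanly by planarity: there is a factorization $w = w^A w^C$ with the letters of $w^A$ rooting in $A$ and those of $w^C$ rooting in $C$. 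Moreover, any dual graph joining $A$ to $B$ (resp. $C$ to $B$) has all its roots in a window of size $<D$, so its $B$--roots lie in the first (resp. last) $D$ generators of $B$, and there are at most $\mH\cdot D$ such dual graphs on each side.

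Cutting $\dd$ along the finitely many dual graphs that join $A$ to $B$ separates $A\cup w^A$ from the rest and exhibits $A^{-1}w^A$ as a word of length at most $\mH\cdot D$, and symmetrically $w^C C^{-1}$; these two short words record exactly how $B$ cancels into its neighbors. Tracking words along the carriers of these boundary dual graphs (as in \Cref{lem:single_hyperplane} and the carrier-word arguments used for \Cref{lem:bounded-cancellation-diameter}) expresses a controlled modification of $B$ as a product of elements of star subgroups $G_{\Star(v)}$. Since $\Gamma$ is finite and, by \Cref{lem:finite_geodesics}, each element of $S_H$ has only finitely many prism-geodesic forms, passing to a subsequence fixes the relevant vertex $v$ and the bounded prefix and suffix data $\sigma,\tau$. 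This traps an unbounded subfamily of the $B^{(k)}$ (pairwise distinct, as they have pairwise distinct $S_H$--lengths) inside a single coset $\sigma^{-1}G_{\Star(v)}\tau^{-1}$, yielding infinitely many distinct elements of $H\cap \sigma^{-1}G_{\Star(v)}\sigma$ and contradicting that $H$ is almost star-free.

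The step I expect to be the main obstacle is the one mirroring the carrier-word extraction of \Cref{lem:bounded-cancellation-diameter}: organizing the shielding data of a long vanishing block so that the trapped elements land in a \emph{single} conjugate of a star subgroup, rather than merely in a prism ball of bounded radius. The locality from \Cref{lem:bounded-cancellation-diameter} guarantees that $B$ interacts only with bounded end-windows of $A$ and $C$ and is shielded from $\overline w$, but converting this into membership in one star-subgroup conjugate --- the precise analog of the carrier element $c^{(k)}\in G_{\Star(v)}$ --- is where the planar bookkeeping and the subsequence reductions must be carried out carefully.
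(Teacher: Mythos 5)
There is a genuine gap, and it is exactly the one you flag at the end: your argument delivers only that a long vanishing block $B$ lies in a bounded prism ball, and that is not enough. From the locality of \Cref{lem:bounded-cancellation-diameter} you correctly get that every dual graph rooted in $B$ has its remaining roots in the $D$--windows $\tau$ and $\sigma$ flanking $B$ (or inside $B$ itself), so $\tau B\sigma$ equals a prism word $w'$ of length at most $2D\mH$ and $B=\overline{\tau}w'\overline{\sigma}$ has prism length at most $4D\mH$. But there is no single dual graph spanning the block, hence no single carrier and no single vertex $v$ with $w'\in G_{\Star(v)}$: $w'$ is just a short product of prism generators from assorted vertex groups. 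Since prism balls are infinite (the vertex groups are infinite) and are not contained in finitely many conjugates of star subgroups, you can neither extract infinitely many distinct elements of $H\cap\sigma^{-1}G_{\Star(v)}\sigma$ nor conclude finiteness of the possible $B$'s from boundedness of $|B|_p$ alone. The contradiction scheme of \Cref{lem:bounded-cancellation-diameter} genuinely does not transfer, because there the trapped elements came from a word along the carrier of one fixed dual graph.

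The paper closes this with a different, non-contradiction idea that you are missing. First it concatenates the original diagram with a diagram for $w''\overline{w'}$ to produce a disk diagram for $\tau h'\sigma\overline{w'}$ in which $h'$ is still vanishing, so every letter of $w'$ has all of its other roots in $\tau$ or $\sigma$. Then \Cref{lem:single_hyperplane} says each prism letter $a$ of $w'$ equals the ordered product of those other roots, which are letters of the fixed prism-geodesic spellings of the at most $2D$ elements of $S_H$ making up $\tau$ and $\sigma$; by \Cref{lem:finite_geodesics} this is a finite alphabet, so each letter of $w'$, and hence $w'$ itself (having $|w'|_p\leq 2D\mH$), ranges over a finite set. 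Since $\tau$ and $\sigma$ also range over finite sets, so does the group element $h'=\overline{\tau}w'\overline{\sigma}$; and because $h'$ is an $S_H$--geodesic subword of an $S_H$--geodesic, $|h'|_{S_H}=j-i+1$ is bounded. No second appeal to almost star-freeness is needed; it enters only through the constant $D$. To repair your write-up you would need to replace the ``single conjugate of a star subgroup'' endgame with this finiteness count.
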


\begin{proof}
    Let $h$ be a $S_H$--geodesic with length $n$ in $S_H$. Consider a vanishing subword $h' = h_i\cdots h_j$ of $h$ in a disk diagram $\mathcal{D}$ for $h\overline{w}$ where $w$ is an $S_p$--geodesic. Let $D=D(S_H)$ be as in \Cref{lem:bounded-cancellation-diameter}. Let $\tau$ be the empty word if $i=1$, and otherwise the subword $h_{i'}\cdots h_{i-1}$ of $h$, where $i' = \max\{1, i-D\}$. Likewise, let $\sigma$ be the the empty word if $j=n$, and otherwise $h_{j+1} \cdots h_{j'}$ where $j' = \min\{n,j+D\}$. Note that, by \Cref{lem:bounded-cancellation-diameter}, and since $h'$ is vanishing in $\dd$, any dual graph in $\mathcal{D}$ rooted in $h'$ can only have endpoints in $\tau h'\sigma$, which is an $S_H$--geodesic subword of $h$.
 
    We first want to show that $h'$ is also a vanishing subword for $\tau h'\sigma$. Let $w'$ be a prism-geodesic representative for $\tau h'\sigma$; we will find a disk diagram for $\tau h'\sigma\overline{w'}$ that evidences this vanishing of $h'$.
\begin{figure}[!h]
    \centering
    \includegraphics[width=0.5\linewidth]{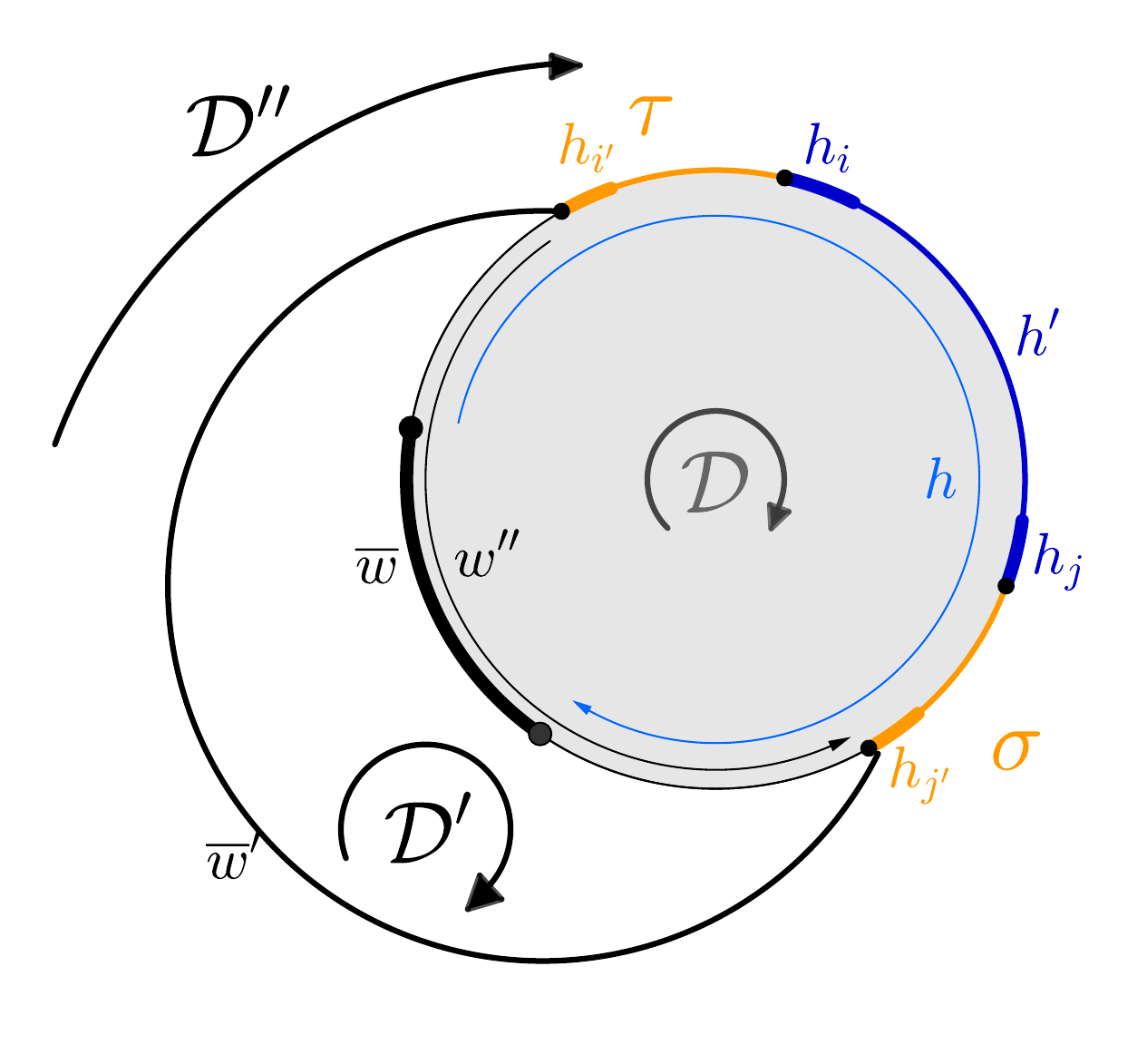}
    \caption{Creation of a disk diagram $\mathcal{D}''$ where $h'$ vanishes}
    \label{fig:Lemma5.6_1}
\end{figure}
Let $w'' = \overline{h_{i'-1}\cdots h_1}w\overline{h_n\cdots h_{j'+1}}$, without the first portion if either $i=1$ or $i'=1$, and without the last portion if either $j=n$ or $j'=n$. This means that $w'$ is also a prism-geodesic representative for $w''$. Let $\mathcal{D}'$ be a disk diagram for $w''\overline{w}'$, and let $\mathcal{D}''$ be the concatenation of $\mathcal{D}$ and $\dd'$ along $w''$, see \Cref{fig:Lemma5.6_1}.  By \Cref{lem:disk_concatenation} this is a disk diagram for $\tau h' \sigma \overline{w}'$. Because the dual graphs in $\mathcal{D}''$ are concatenations of dual graphs in $\mathcal{D}$ and $\mathcal{D}'$, no graph dual to $w'$ in $\mathcal{D}''$ roots in $h'$; thus $h'$ is vanishing for $\tau h'\sigma$ as claimed.

To complete the proof, it suffices to show that there are only finitely many possibilities for $h'= \overline{\tau}w'\overline{\sigma}$, since this automatically bounds $|h'|_H = j-i+1$. First note that the $S_H$--lengths of $\tau$ and $\sigma$ are bounded by $D$, so as $S_H$ is finite, there are only finitely many possibilities for $\overline{\tau}$ and $\overline{\sigma}$. By \Cref{lem:single_hyperplane}, we have that each edge labeled $a$ in $w'$ satisfies $a=a_1\dots a_m$, where the $a_i$ are the roots in either $\tau$ or $\sigma$ of the graph dual to $a$, in order along the boundary. As the $S_H$--lengths of $\tau$ and $\sigma$ are bounded by $D$, and \Cref{lem:finite_geodesics} bounds the number of prism-geodesic ways to write the finitely many elements of $S_H$, this gives only finitely many possibilities for each prism letter $a$ of $w'$. As the $S_p$--lengths of $\tau$ and $\sigma$ are bounded by $D\mH$, the $S_p$--length of $w'$ is bounded by $2D\mH$. Therefore there are only finitely many possibilities for $w'$, and likewise only finitely many possibilities for $h'$.
\end{proof}

 In contrast to the previous two results, Lemma \ref{lem:bounded-concatenation} addresses new behavior arising in disk diagrams for graph products.  Because disk diagram concatenation also concatenates dual graphs, we must verify we do not lose the quantitative control from \Cref{lem:bounded-cancellation-diameter}.

\begin{lem}[Bounded post-concatenation combination]
    \label{lem:bounded-concatenation}
    Suppose $H$ is a finitely generated almost star-free subgroup of a graph product $\gp$, and let $S_H$ be a finite generating set of $H$. Then there exists a constant $C=C(S_H)$ with the following property.  Suppose $h\in H$ is an $S_H$--geodesic word $h=h_1\cdots h_n$ equivalent to the $S_p$--geodesic word $w$, and that the sub-word $h'=h_i\cdots h_j$ of $h$ is equivalent to the $S_p$--geodesic word $u$. Let $\mathcal{D}$ be a disk diagram for the identity word $h\overline{w}$, let $\mathcal{D}'$ be a disk diagram for the identity word $\bar h'u$, and let $\mathcal{D}''$ be the concatenation of $\mathcal{D}$ and $\mathcal{D}'$ along $h'$. If a dual graph in $\mathcal{D}''$ has roots in $u$, $h_1\cdots h_{i-1}$, and $h_{j+1}\cdots h_n$, then $|i-j|<C$.
\end{lem}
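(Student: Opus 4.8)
The plan is to avoid analysing how $\beta$ weaves through the gluing interface and instead to argue directly inside $\mathcal{D}''$, which by \Cref{lem:disk_concatenation} is an honest disk diagram with boundary word (read clockwise) $h_1\cdots h_{i-1}\,u\,h_{j+1}\cdots h_n\,\overline{w}$. Let $v$ be the vertex associated to $\beta$, let $a_L\in G_v$ be the root of $\beta$ inside some $h_p$ with $p\leq i-1$, and let $a_R\in G_v$ be the root inside some $h_q$ with $q\geq j+1$. Because the boundary runs $h_1\cdots h_{i-1}$, then $u$, then $h_{j+1}\cdots h_n$, travelling clockwise from $a_L$ to $a_R$ sweeps across the whole of $u$. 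The key idea I would exploit is that this clockwise boundary arc is group-equal to an element of the star subgroup $G_{\Star(v)}$, and that substituting the equal word $h_i\cdots h_j$ for $u$ converts it into (essentially) a genuine subword of the $S_H$--geodesic $h$; almost star-freeness then forces that subword, and hence $j-i$, to be short.

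To carry this out, write $\Xi$ for the clockwise boundary subword from just after $a_L$ to just before $a_R$, so $\Xi=\mu_2\,h_{p+1}\cdots h_{i-1}\,u\,h_{j+1}\cdots h_{q-1}\,\nu_1$, where $\mu_2$ is the suffix of $h_p$ after $a_L$ and $\nu_1$ the prefix of $h_q$ before $a_R$. Following the proof of \Cref{lem:single_hyperplane}, the roots of $\beta$ met strictly between $a_L$ and $a_R$ (including its root in $u$) all lie in $G_v$ with product some $P\in G_v$, while each boundary piece between consecutive roots may be replaced by a word along the carrier of $\beta$ that does not cross $\beta$ and hence lies in $G_{\Link(v)}$; their product is some $Q\in G_{\Link(v)}$. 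Since $G_v$ and $G_{\Link(v)}$ commute, $\Xi\eqe PQ\in G_{\Star(v)}$. Finally, as $u$ is an $S_p$--geodesic for $h'=h_i\cdots h_j$ we have $u\eqe h_i\cdots h_j$, so $\mu_2\,h_{p+1}\cdots h_{q-1}\,\nu_1\eqe\Xi\in G_{\Star(v)}$.

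To turn this into the bound I would argue exactly as in \Cref{lem:bounded-cancellation-diameter}. Since $h_p,h_q\in S_H$ and, by \Cref{lem:finite_geodesics}, each element of the finite set $S_H$ has only finitely many prism-geodesic representatives (of length at most $\mH$), the pieces $\mu_2,\nu_1$ range over a finite set, and $v$ over the finite set $V(\G)$. For a fixed triple $(v,\mu_2,\nu_1)$, any two $x,y\in H$ with $\mu_2\,x\,\nu_1,\ \mu_2\,y\,\nu_1\in G_{\Star(v)}$ satisfy $xy^{-1}\in\mu_2^{-1}G_{\Star(v)}\mu_2$, so $\{x\in H:\mu_2\,x\,\nu_1\in G_{\Star(v)}\}$ is contained in a single coset of the finite group $H\cap\mu_2^{-1}G_{\Star(v)}\mu_2$ (finite because $H$ is almost star-free), and is therefore finite. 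Taking the largest $S_H$--length occurring across these finitely many finite sets gives a constant $C=C(S_H)$ with $q-p-1=|h_{p+1}\cdots h_{q-1}|_{S_H}\leq C$; since $q-p\geq(j+1)-(i-1)=j-i+2$ this yields $|i-j|\leq C-1<C$.

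The one genuinely delicate point is the first step. Because concatenation fuses the dual graphs of $\mathcal{D}$ and $\mathcal{D}'$, one cannot feed $\beta$ into \Cref{lem:bounded-cancellation-diameter} directly, and tracking how $\beta$ crosses and re-crosses the interface $h'$ leads to arbitrarily long chains of sub-dual-graphs whose spans are hard to control. Working instead with the carrier of the single dual graph $\beta$ in the honest disk diagram $\mathcal{D}''$, and in particular routing the boundary arc through $u$ so that the relation $u\eqe h_i\cdots h_j$ restores an actual subword of $h$, is what sidesteps the chain problem and reduces the statement to the same almost-star-free counting already used for \Cref{lem:bounded-cancellation-diameter}.
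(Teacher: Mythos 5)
Your proof is correct, but it routes the argument differently from the paper, and the difference is worth noting. The paper restricts the fused dual graph back to $\mathcal{D}$, invokes \Cref{lem:bounded-cancellation-diameter} to locate roots $h_{i'},h_{j'}$ inside $h'$ within distance $D$ of its ends, writes $h'=\sigma W\tau$ with $W$ a word along the carrier lying in $G_{\Star(v)}$ and $\sigma,\tau$ ranging over finitely many possibilities, and then derives a contradiction with almost star-freeness via a subsequence extraction. You instead stay entirely inside the concatenated diagram $\mathcal{D}''$ and take the carrier word along the \emph{complementary} boundary arc --- the one running through $h_1\cdots h_{i-1}$, $u$, and $h_{j+1}\cdots h_n$ --- so that the group equality $u\eqe h_i\cdots h_j$ converts the arc into an honest subword $h_{p+1}\cdots h_{q-1}$ of the $S_H$--geodesic $h$, flanked by affixes $\mu_2,\nu_1$ drawn from the finite set of prefixes and suffixes of prism-geodesic spellings of $S_H$ (finite by \Cref{lem:finite_geodesics}). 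This buys you two simplifications: you never need \Cref{lem:bounded-cancellation-diameter}, and your closing step is a direct coset count (the admissible $x$ lie in one coset of the finite group $H\cap\mu_2^{-1}G_{\Star(v)}\mu_2$) rather than a proof by contradiction, so the constant $C$ is in principle explicit. Two small points to be aware of: your decomposition of the arc into roots $P\in G_v$ and carrier pieces $Q\in G_{\Link(v)}$ \`a la \Cref{lem:single_hyperplane} is more than you need --- a single word along the carrier from the inner end of $a_L$ to that of $a_R$ already spells an element of $G_{\Star(v)}$ and is group-equal to the arc --- and your argument never actually uses the hypothesis that the dual graph roots in $u$, so you are proving a slightly stronger statement; neither is an error.
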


\begin{proof}
    Consider the restriction of the relevant dual graph in $\dd''$ to $\mathcal{D}$. This gives us a dual graph in $\mathcal{D}$ that is rooted in both $h_1\cdots h_{i-1}$ and some $h_{i'}$ in $h'$, and a dual graph in $\mathcal{D}$ that is rooted in both $h_{j+1}\cdots h_n$ and some $h_{j'}$ in $h'$ (possibly the same graph). We can assume without loss of generality that $i'\leqslant j'$. 
    By \Cref{lem:bounded-cancellation-diameter} applied to $\mathcal{D}$, we therefore have that $|i-i'|<D$ and $|j-j'|<D$, where $D$ is the constant from \Cref{lem:bounded-cancellation-diameter}.

    Let $v$ be the vertex associated to the relevant dual graph in $\mathcal{D''}$. By taking a word $W$ along the carrier of this dual graph, we can see that $h'$ can be written as $\sigma W \tau$, where $\sigma$ is an $S_p$--subword of $h_i\cdots h_{i'}$, $\tau$ is an $S_p$--subword of $h_{j'}\cdots h_j$, and $W\in G_{\Star(v)}$, as illustrated in \Cref{fig:Bounded concatenation}.

     \begin{figure}[!h]
    \centering
    \includegraphics[width=0.4\linewidth]{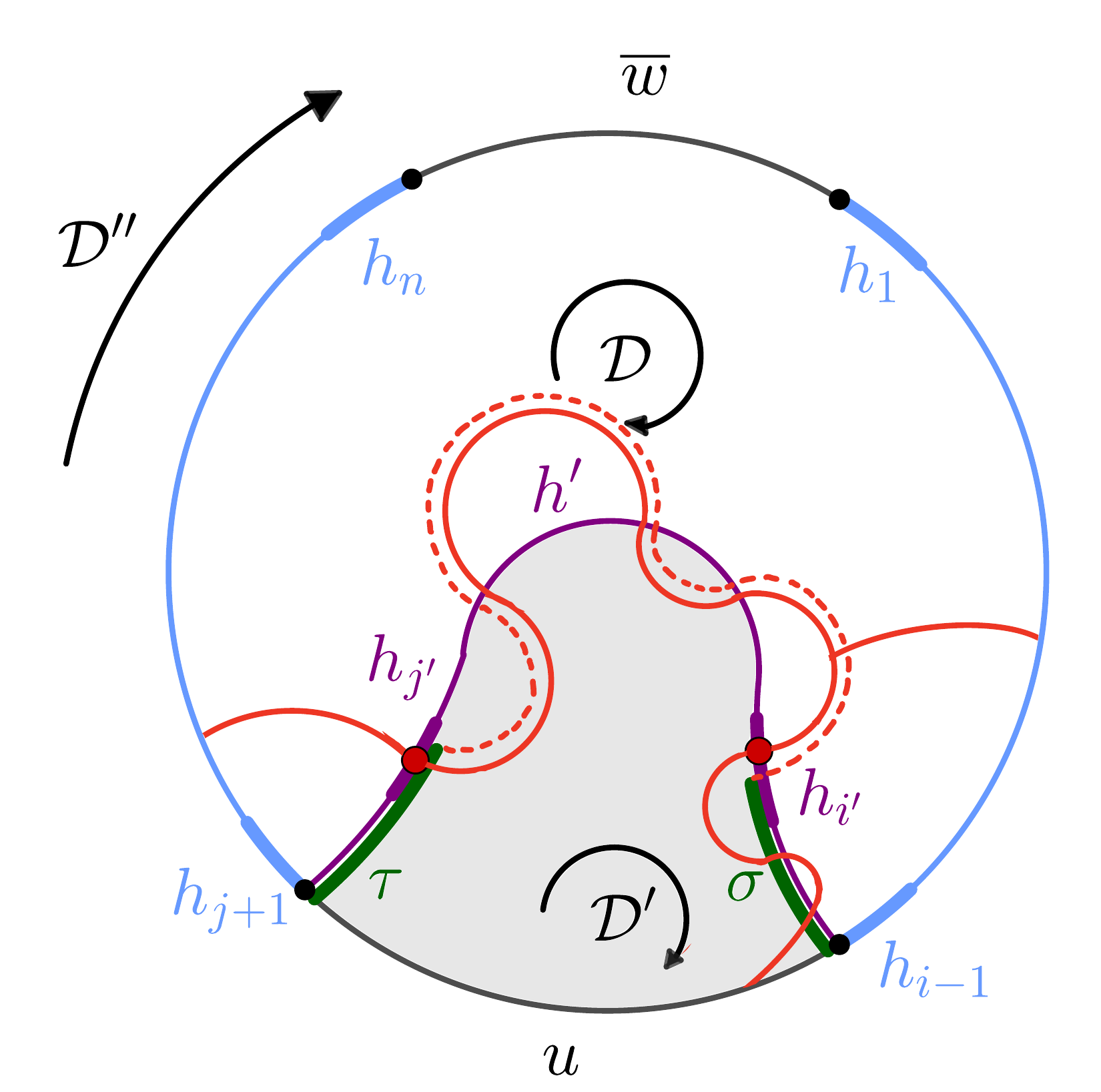}
    \caption{The relevant dual graph in $\dd\cup\dd'$.}
    \label{fig:Bounded concatenation}
\end{figure}

    Now suppose, by contradiction, that no bound on $|i-j|$ exists. We can therefore pick a sequence of $h^{(k)}$ and $h'^{(k)}$ in $H$ where the $h'^{(k)}$ have strictly increasing $S_H$--length, and such that disk diagrams and dual graphs exist for each which satisfy the hypotheses of this lemma. In particular, we can write $h'^{(k)}$ as $\sigma^{(k)} W^{(k)} \tau^{(k)}$, where $\sigma^{(k)}$ is an $S_p$-subword of $h_{i(k)}^{(k)}\cdots h_{i'(k)}^{(k)}$, $\tau^{(k)}$ is an $S_p$--subword of $h_{j'(k)}^{(k)}\cdots h_{j(k)}^{(k)}$, and $W^{(k)}\in G_{\Star(v^{(k)})}$. As there are only finitely many possibilities for $v^{(k)}$, and both $|i(k)-i'(k)|<D$ and $|j(k)-j'(k)|<D$, in the same way as in previous lemmas in this section, we can pass to a subsequence such that $h'^{(k)}=\sigma W^{(k)}\tau$ for some constant words $\sigma$ and $\tau$, and such that each $W^{(k)}\in G_{\Star(v)}$ for the same vertex $v$.
    
    Recall that the $S_H$--length of the $h'^{(k)}$ strictly increase as $k$ increases, so in particular no two are equal as elements of $H$. We therefore have that
    \begin{equation*}
        h'^{(1)}(h'^{(k)})^{-1}=\sigma W^{(1)}(W^{(k)})^{-1}\sigma^{-1}\in \sigma G_{\Star(v)}\sigma^{-1}
    \end{equation*}
    is an infinite sequence of distinct elements of $H$. This contradicts $H$ being almost star-free.
\end{proof}

\subsection{Join-busting}
\label{sec:proof}

In this section we prove a key property of almost join-free subgroups of $\gp$; this will make use of the disk diagram tools established thus far.

\begin{defn}\label{def:joinbusting}
A \emph{join (sub)word} is a (sub)word consisting of letters from $S_p$ such that all these letters belong to the same join subgroup of $\gp$.  A subgroup $H$ of a graph product $\gp$ is said to be \emph{N--join-busting} for some $N>0$ if for any prism geodesic $w$ representing an element $h\in H$, the prism length of any join subword of $w$ is bounded by $N$.
\end{defn}

We have one more technical result, \Cref{lem:homogeneity}, to record before we proceed to the main theorem of this section.

\begin{defn}
    We call an $S_p$--word \emph{M--homogeneous with respect to $\Lambda$} for some subgraph $\Lambda$ of $\Gamma$, if all of its letters are contained in $G_{\Lambda}$, and all of its subwords of prism length at least $M$ contain at least one letter from each vertex group of $\Lambda$.   
\end{defn}

\begin{lem}\label{lem:homogeneity} 
    If $(u_k)_{k\in\N}$ is a sequence of $S_p$--geodesic join words, such that $|u_k|_p$ tends to infinity with $k$, then there exists a sequence of $S_p$--geodesic words $(w_n)_{n\in\N}$ such that:
    \begin{itemize}
        \item the $w_n$ are subwords of a subsequence of $u_k$,
        \item the lengths $|w_n|_p$ tend to infinity with $n$,
        \item there exists $M>0$ and a join ${\Lambda}$ such that the $w_n$ are all $M$--homogeneous with respect to the same subgraph of ${\Lambda}$.
    \end{itemize}
\end{lem}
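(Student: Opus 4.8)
The plan is to strip the statement down to a purely combinatorial fact about finite sequences colored by the vertices of $\Gamma$, and to obtain all the uniformity (the single constant $M$ and the single subgraph $\Lambda'$) by repeatedly passing to subsequences rather than by thresholding inside a single word. First I would reduce the ambient data. Each $u_k$ is a join word, so its letters lie in $G_{\Lambda_k}$ for some join subgraph $\Lambda_k$ of $\Gamma$; since $\Gamma$ is finite there are only finitely many such subgraphs, so after passing to a subsequence I may assume $\Lambda_k$ equals a fixed join $\Lambda$ for all $k$. I record two facts for free: a contiguous subword of an $S_p$--geodesic is again $S_p$--geodesic (the witnessing letter in \Cref{lem:charac_geod} for a same-group pair lying inside the subword already lies between them), and the prism length of a prism word equals its number of letters. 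Thus I may regard each $u_k$ as a word over the finite alphabet $V(\Lambda)$, where each $S_p$--letter is colored by the vertex of its vertex group, and ``$M$--homogeneous with respect to a subgraph $\Lambda'$'' becomes the statement that the set of colors used is exactly $V(\Lambda')$ and every window of length $M$ meets every color of $V(\Lambda')$.

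With this dictionary, the lemma follows from the following combinatorial claim, which I would prove by induction on the size $q$ of the alphabet: given words $(u_k)$ over a finite alphabet $A$ with $|A|=q$ and $|u_k|\to\infty$, there is a subsequence, subwords $w_k$ of those $u_k$ with $|w_k|\to\infty$, a constant $M$, and a fixed subset $A'\subseteq A$ such that each $w_k$ uses exactly the colors $A'$ and every length-$M$ window of $w_k$ meets every color of $A'$. For $q=1$ the words themselves work with $M=1$. For the inductive step, let $G_k$ be the maximal gap of $u_k$: the length of the longest subword of $u_k$ avoiding some color $x$ present in $u_k$ (the boundary stretches included). Passing to a subsequence, either $G_k\le M_0$ is bounded, or $G_k\to\infty$. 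In the bounded case I pass to a further subsequence on which the set of colors used is a constant $A'$; then no window of length $M_0+1$ can avoid any color of $A'$, so $w_k:=u_k$ are $(M_0+1)$--homogeneous with respect to $A'$ and have lengths tending to infinity. In the unbounded case I pass to a subsequence fixing a color $x$ realizing a gap of length $\ge G_k$; each such gap is a subword of $u_k$ of length $\ge G_k\to\infty$ that avoids $x$, hence uses at most $q-1$ colors, and the inductive hypothesis applied to this new sequence produces the desired $w_k$, $M$, and $A'$.

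Feeding the claim back through the dictionary (with $A=V(\Lambda)$) gives $S_p$--geodesic subwords $w_n$ of a subsequence of the $u_k$, with $|w_n|_p\to\infty$, all $M$--homogeneous with respect to a single subgraph $\Lambda'$ of the join $\Lambda$, as required. The one genuinely delicate point is keeping the homogeneity constant $M$ \emph{bounded} uniformly over the infinitely many $w_n$: a naive single-word argument that declares $u_k$ homogeneous as soon as all its gaps are short forces $M$ to grow with a threshold like $\sqrt{|u_k|}$, which is useless here. The device that avoids this is precisely to phrase the gap dichotomy for the \emph{sequence} and to pass to subsequences, so that ``all gaps bounded'' means bounded by a single constant $M_0$, while every genuinely long gap is instead consumed by the induction on a strictly smaller alphabet. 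Since the induction has depth at most $|V(\Lambda)|$, the resulting $M$ is a constant depending only on $\Lambda$, which is exactly what the statement needs.
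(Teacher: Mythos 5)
Your proof is correct, and it is essentially the argument the paper intends: the paper's own ``proof'' is just a citation to \cite[Lemma 5.1]{RAAGstable}, whose argument is exactly this induction on the support with the dichotomy between uniformly bounded gaps (yielding the homogeneity constant) and an unbounded gap (yielding long subwords over a strictly smaller vertex set). One small inaccuracy in your closing remark: the constant $M$ you produce is $M_0+1$ for the gap bound $M_0$ extracted at the terminating level of the induction, so it depends on the particular sequence and not only on $\Lambda$ --- but the lemma only asserts existence of some $M$ for the sequence, so this does not affect the proof.
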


\begin{proof}
    This proof works in the same way as \cite[Lemma 5.1]{RAAGstable}.
\end{proof}

\begin{thm}[Join-busting]\label{thm:join-busting} Let $\G$ be a graph with no isolated vertices. Suppose $H$ is a finitely generated almost join-free subgroup of a graph product $\gp$, and let $S_H$ be a finite generating set of $H$. Then there exists a constant $N=N(S_H)$ such that $H$ is N--join-busting.
\end{thm}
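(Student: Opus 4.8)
The plan is to argue by contradiction, extracting from a failure of join-busting an infinite family of distinct elements lying in a single conjugate of a join subgroup, thereby contradicting almost join-freeness. So suppose no such $N$ exists. Then there is a sequence of elements $h^{(k)}\in H$, each with a prism geodesic representative $w^{(k)}$, containing a join subword $u^{(k)}$ whose prism length $|u^{(k)}|_p$ tends to infinity. First I would apply \Cref{lem:homogeneity} to this sequence of $S_p$--geodesic join words $u^{(k)}$: after passing to a subsequence and replacing the $u^{(k)}$ by suitable subwords, I may assume the join subwords are $M$--homogeneous with respect to a single fixed subgraph of a single join $\Lambda$, with lengths still tending to infinity. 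This homogeneity is the crucial gain: it says that long join subwords must use \emph{every} vertex group of $\Lambda$ at a controlled density, so they genuinely ``spread out'' across the join rather than concentrating in a proper subjoin.

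Next I would set up the disk-diagram machinery. For each $k$, write $h^{(k)}$ as an $S_H$--geodesic $h^{(k)}_1\cdots h^{(k)}_{n(k)}$ (with each $S_H$-letter replaced by its fixed prism geodesic), and consider a disk diagram $\dd^{(k)}$ for $h^{(k)}\overline{w}^{(k)}$. Using \Cref{lem:Combing}, I would arrange this diagram to be $w^{(k)}$--left-combed, which by \Cref{lem:combing_subsubwords} is also left-combed with respect to the join subword $u^{(k)}$. The beginning function $b^{\dd^{(k)}}$ restricted to the indices of $u^{(k)}$ is then strictly increasing. Applying the reduction-control estimate \Cref{lem:RedCon} to the endpoints of $u^{(k)}$, the span of $S_H$--letters of $h^{(k)}$ whose first roots support $u^{(k)}$ has length at least $|u^{(k)}|_p/\mH$, which tends to infinity; so the portion of $h^{(k)}$ responsible for the join subword is itself getting arbitrarily long in $S_H$.

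The heart of the argument, and the main obstacle, is to convert this long join subword into a genuinely long subword of $h^{(k)}$ lying in a \emph{single conjugate} of a join subgroup. The idea is that each letter of $u^{(k)}$ comes from a vertex group of $\Lambda$, and by reading words along the carriers of the dual graphs rooted in $u^{(k)}$ one can rewrite the intervening portion of $h^{(k)}$ as an element of (a conjugate of) $G_{\Star(\Lambda')}$ for the relevant subgraph $\Lambda'$; since $\Lambda'\subseteq\Lambda$ and $\Lambda$ is a join, \Cref{lem:StarOfASubgraph} guarantees $\Star(\Lambda')$ is again a join subgraph, so this is a conjugate join subgroup. Here is where the homogeneity from \Cref{lem:homogeneity} and the bounded-combination diameter \Cref{lem:bounded-cancellation-diameter} (together with bounded non-contribution, \Cref{lem:bounded-non-contribution}) must be combined carefully: homogeneity ensures the subword does not degenerate into a proper subjoin, while the bounded-cancellation and non-contribution bounds ensure the conjugating prefix and suffix $\sigma,\tau$ can be taken from a \emph{finite} set, so that after passing to a further subsequence they are constant. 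With $\sigma,\tau$ fixed and the join subgroup $G_{\Star(\Lambda')}$ fixed, the elements $h'^{(k)}$ extracted from the middle satisfy $h'^{(1)}(h'^{(k)})^{-1}\in \sigma^{-1}G_{\Star(\Lambda')}\sigma$, and since their $S_H$--lengths strictly increase they are pairwise distinct. This produces infinitely many distinct elements of $H$ in a single conjugate of a join subgroup, contradicting almost join-freeness. The delicate point throughout is quantitative bookkeeping: one must ensure that the homogeneity window $M$, the diameter bound $D$, and the non-contribution bound $K$ all interact so that only finitely many ``boundary data'' $(\sigma,\tau,\Star(\Lambda'))$ arise, which is precisely what lets the pigeonhole/subsequence step produce a genuine conjugate-join obstruction.
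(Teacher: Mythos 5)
Your overall architecture matches the paper's proof: contradiction, \Cref{lem:homogeneity} to fix a single subgraph $\Lambda$ of a join, left-combing plus \Cref{lem:RedCon} to show the $S_H$--span supporting the join subword grows without bound, extraction of a long middle block of the form $\sigma W\tau$ with $\sigma,\tau$ ranging over a finite set, \Cref{lem:StarOfASubgraph} to see that $\Star(\Lambda)$ is a join, and a pigeonhole/difference argument contradicting almost join-freeness. However, at the step you yourself identify as the heart of the argument there is a genuine gap: the mechanism you propose --- ``reading words along the carriers of the dual graphs rooted in $u^{(k)}$'' --- does not produce what you need. A word along the carrier of a \emph{single} dual graph lies in a single $G_{\Star(v)}$ and connects two roots of that one graph; by \Cref{lem:bounded-cancellation-diameter} (i.e.\ by almost star-freeness) the portion of $h^{(k)}$ it captures has $S_H$--length less than $D$, so this device can never exhibit an arbitrarily long subword of $h^{(k)}$ inside one conjugate of a join subgroup. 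The idea that is missing is a letter-by-letter containment argument: one shows that \emph{every} prism letter $x$ of the (geodesic representative of the) middle block whose dual graph reaches the geodesic side $a w b$ lies in $G_{\Star(\Lambda)}$. If the graph dual to $x$ roots in $w$ then $x\in G_\Lambda$; if it roots in $a$ (resp.\ $b$), then by left-combedness it must cross every dual graph rooted in the first (resp.\ last) $M$ letters of $w$, and $M$--homogeneity guarantees those letters hit every vertex group of $\Lambda$, forcing $x\in G_{\Link(\Lambda)}$. This crossing argument is the real role of homogeneity --- not, as you suggest, preventing degeneration to a subjoin --- and without it the containment of the long block $W$ in $G_{\Star(\Lambda)}$ is unjustified.

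Two smaller points. First, the finiteness of the conjugators $\sigma,\tau$ does not come from \Cref{lem:bounded-non-contribution} (which is not needed here) but from \Cref{lem:bounded-cancellation-diameter} together with \Cref{lem:single_hyperplane} and \Cref{lem:finite_geodesics}: each letter of $\sigma$ (resp.\ $\tau$) is the product of the roots of its dual graph, all of which lie in a window of at most $D$ letters of $S_H$. Second, to actually split the geodesic $u$ for the middle block as $\sigma W\tau$ one must concatenate a diagram for $\overline{h}''u$ onto $\dd$ and then left- and right-comb; the paper needs \Cref{lem:bounded-concatenation} precisely to rule out a dual graph of $u$ rooting in both the left and right windows, which would obstruct this decomposition. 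Your sketch omits this concatenation step and the lemma that makes it work.
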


\begin{proof}
    We begin by noticing that as $\G$ has no isolated vertices, the assumption that $H$ is almost join-free implies that $H$ is also almost star-free. This allows us to apply the results from \Cref{sec:star-free}.
    
    Towards contradiction, assume that $H$ is not $N$--join-busting for any $N$.  Then we can find a sequence $(h^ {(k)})_{k\in\N}$ of elements in $H$ such that some prism-geodesic representatives can be written $a^{(k)}w^{(k)}b^{(k)}$, where the $w^{(k)}$ are join words with $\displaystyle \lim_{k\to\infty}|w^{(k)}|_p=\infty$.  For each $k$, we fix an $S_H$--geodesic representative \[h^{(k)}=h^{(k)}_1\cdots h^{(k)}_{n(k)},\] for $h^{(k)}$, meaning $n(k)$ is the length of $h^{(k)}$ in $S_H$ and each $h^{(k)}_i$ is an element of $S_H$.  We consider $h^{(k)}$ as a word in $S_p$ by representing each $h^{(k)}_i$ by a prism-geodesic word in $S_p$.
    
    By \Cref{lem:Combing}, we can assume that we have a $w^{(k)}$--left-combed disk diagram $\dd^{(k)}$ for each identity word $h^{(k)}\overline{a^{(k)}w^{(k)}b^{(k)}}$, after re-ordering the letters of $w^{(k)}$.  Applying Fact \ref{lem:combing_subsubwords} and Lemma \ref{lem:homogeneity}, we may also assume that there exists $M>0$ such that the $w^{(k)}$ are all $M$--homogeneous with respect to the same subgraph $\Lambda$ of $\G$, where $\Lambda$ is contained in a join.  That is, we pass to a subsequence and replace $w^{(k)}$ by subwords, effectively enlarging the parts of the prism-geodesic representative for $h^{(k)}$ labeled by $a^{(k)}$ and $b^{(k)}$.

    By Lemma \ref{lem:rooting}, each dual graph in $\dd^{(k)}$ rooted in $w^{(k)}$ has its remaining roots only in $h^{(k)}$.  Our goal now is to find an $S_H$--subword of $h^{(k)}$ whose prism geodesic representative is of the form $\sigma^{(k)}W^{(k)}\tau^{(k)}$, such that $W^{(k)}$ is a join word whose $S_p$--length tends to infinity with $k$, whereas $\sigma^{(k)}$ and $\tau^{(k)}$ each range over finitely many possibilities.

    Towards this, first let us reduce notation by fixing $k$.  We set $h = h^{(k)}=h_1\cdots h_n$, $a = a^{(k)}$, $b = b^{(k)}$, $\dd=\dd^{(k)}$, and $w=w^{(k)}=w_1\cdots w_m$. Assume without loss of generality that $m>2M$. Since $\dd$ is $w$--left-combed, we may apply \Cref{lem:RedCon} for $\ell = M$ and $r = m - M$.  Thus, where $h_L$ and $h_R$ contain the first edges dual to the same graphs as $w_\ell$ and $w_r$ respectively, we have by \Cref{lem:RedCon}
    \[R - L \geq \frac{m-2M}{\mH}.\]
    In particular, longer $S_p$--length of $w$ implies longer $S_H$--length of $h_{L+1}\cdots h_{R-1}$; denote the latter word by $h'$.  Henceforth we assume $w$ is long enough so that $R-L > C+2D$, where $D$ and $C$ are the constants taken from \Cref{lem:bounded-cancellation-diameter} and \Cref{lem:bounded-concatenation}. We assume without loss of generality that $D$ is an integer, and that $D\geqslant 2$.
    
 Let $x$ be a prism letter of $h'$ whose dual graph in $\mathcal{D}$ has a root in $awb$. Either it is rooted in $w$, in which case $x$ belongs to $G_\Lambda$, or it is rooted in one of $a$ or $b$. Since $\dd$ is $w$--left-combed, all the dual graphs rooted in $w_i$ for $1\leq i \leq M$ have roots in $h_1\cdots h_R$, so a graph dual to $x$ rooted in $a$ will cross all of them. Similarly, a graph dual to $x$ rooted in $b$ will cross all the dual graphs rooted in $w_i$ for $m-M\leq i \leq m$. In either case, because $w$ is $M$--homogeneous for $\Lambda$, the graph dual to $x$ crosses a graph rooted in an edge corresponding to each vertex group of $\Lambda$; therefore $x$ belongs to $G_{\Star(\Lambda)}$.  Thus we have:
    
    \begin{fact}\label{fact:rootawb}
    If $x$ is a letter of $h'$ with dual graph rooted in $awb$, then $x \in G_{\Star(\Lambda)}$.
    \end{fact}
    
    Now we consider the subword $h'' = h_{L+D}\cdots h_{R-D}$, and an $S_p$--geodesic representative $u=u_1\cdots u_t$, where $u_i \in S_p$ and $t = |h''|_p$. By assumption on $R-L$, the $S_H$--length of $h''$ is positive. Note also that, since $S_H$ is finite, we have finitely many possibilities for the subwords $h_{L+1} \cdots h_{L+D-1}$ and $h_{R-D+1}\cdots h_{R-1}$.  We also note that, by \Cref{lem:bounded-cancellation-diameter}, any graph dual to a letter of $h''$ can only have roots in $h'$ or $awb$.

    Let $\mathcal{D}''$ be a disk-diagram for $\overline{h}''u$, and concatenate it with $\mathcal{D}$ to obtain a disk diagram $\mathcal{P}$ for the word $uh_{R-D+1}\cdots h_n\overline{awb}h_1\cdots h_{L+D-1}$. This is the situation illustrated in \Cref{fig:JBConc}. By construction, and by \Cref{fact:concatenate-dual-graph} and \Cref{lem:rooting}, a graph dual to a letter of $u$ in $\mathcal{P}$ has its remaining roots in some combination of $h_{L+1} \cdots h_{L+D-1}$, $h_{R-D+1}\cdots h_{R-1}$, and $awb$. In particular, by \Cref{lem:bounded-concatenation}, and our assumption that $R-L > C+2D$, any graph dual to a letter in a $u$ can only have roots in exactly one of $h_{L+1} \cdots h_{L+D-1}$ or $h_{R-D+1}\cdots h_{R-1}$. As a consequence, if a graph dual in $\mathcal{P}$ to a letter of $u$ has a root in $awb$ (it cannot have more than one by \Cref{lem:rooting}), then it should be its beginning or ending one (in the sense of beginning and ending functions with respect to $u$, as defined in \Cref{sec:combing}). Moreover, by \Cref{fact:concatenate-dual-graph} and \Cref{fact:rootawb}, any letter of $u$ whose dual graph has a root in $awb$ must belong to $x \in G_{\Star(\Lambda)}$.

    \begin{figure}[!h]
\begin{subfigure}[c]{.32\textwidth}
  \centering
  \includegraphics[scale=0.26]{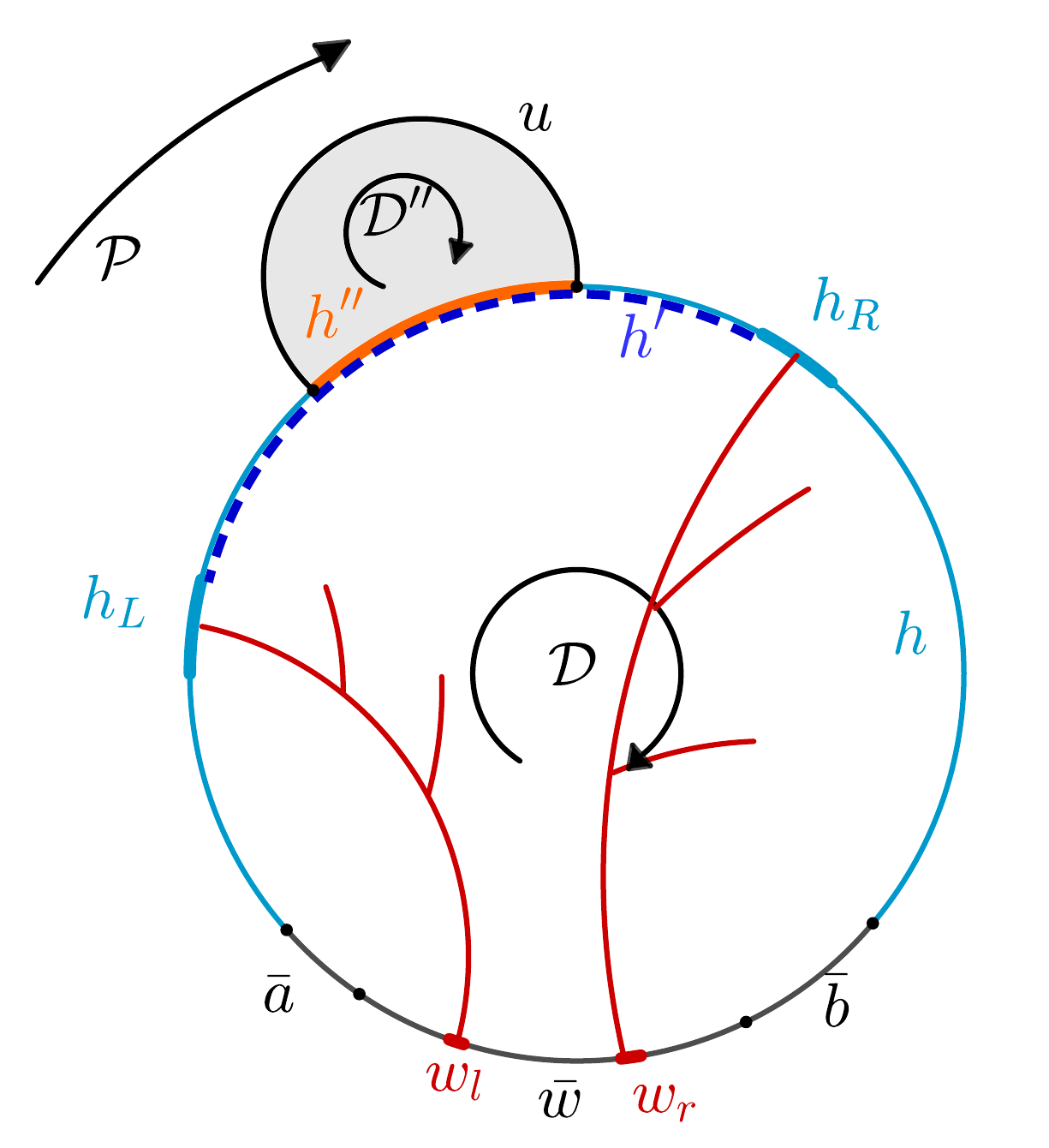}
  \caption{Concatenation }
  \label{fig:JBConc}
\end{subfigure}%
\hfill
\begin{subfigure}[c]{.32\textwidth}
  \centering
  \includegraphics[scale=0.26]{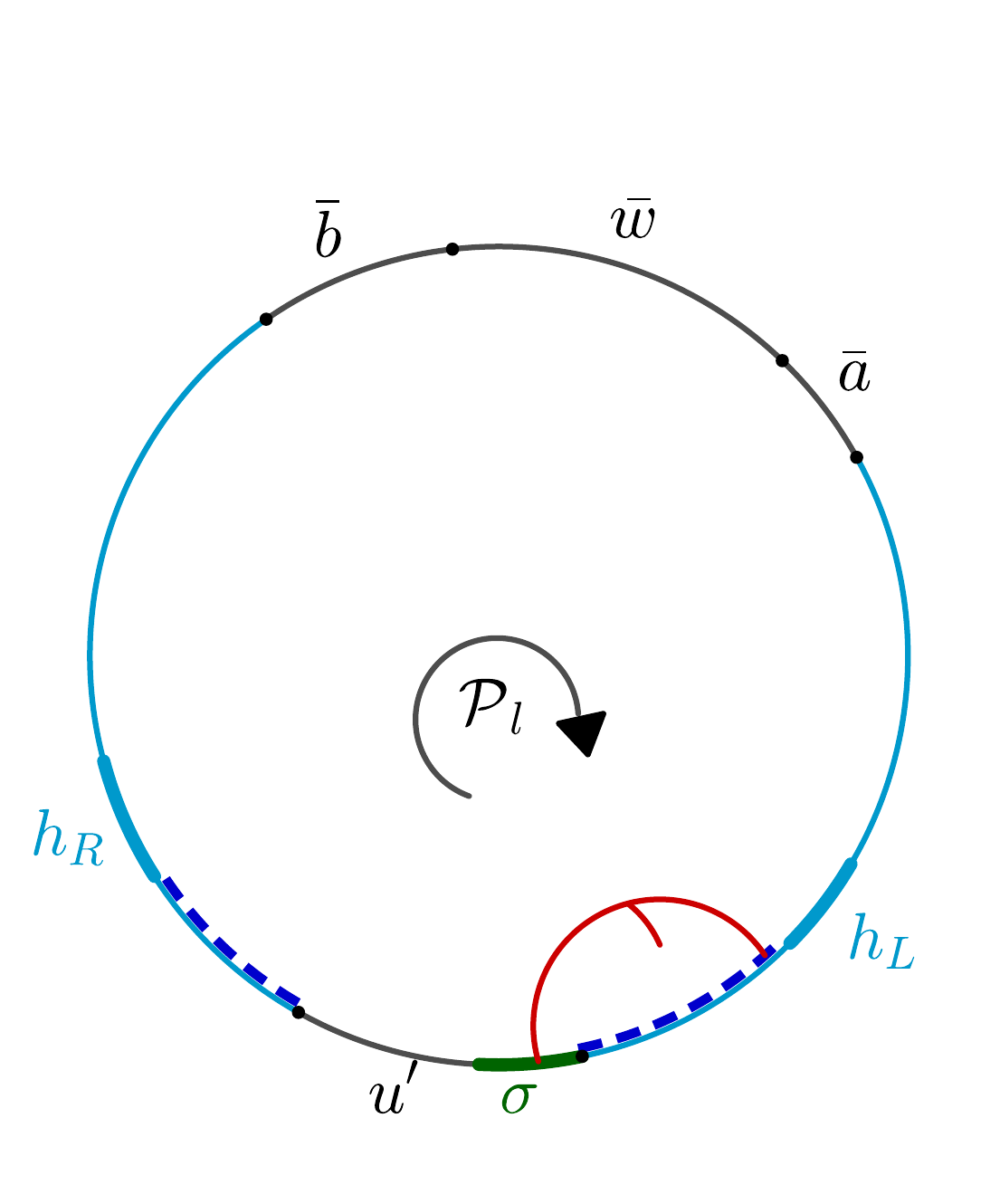}
  \caption{Left-combing}
  \label{fig:JBLeftComb}
\end{subfigure}%
\hfill
\begin{subfigure}[c]{.32\textwidth}
  \centering
  \includegraphics[scale=0.26]{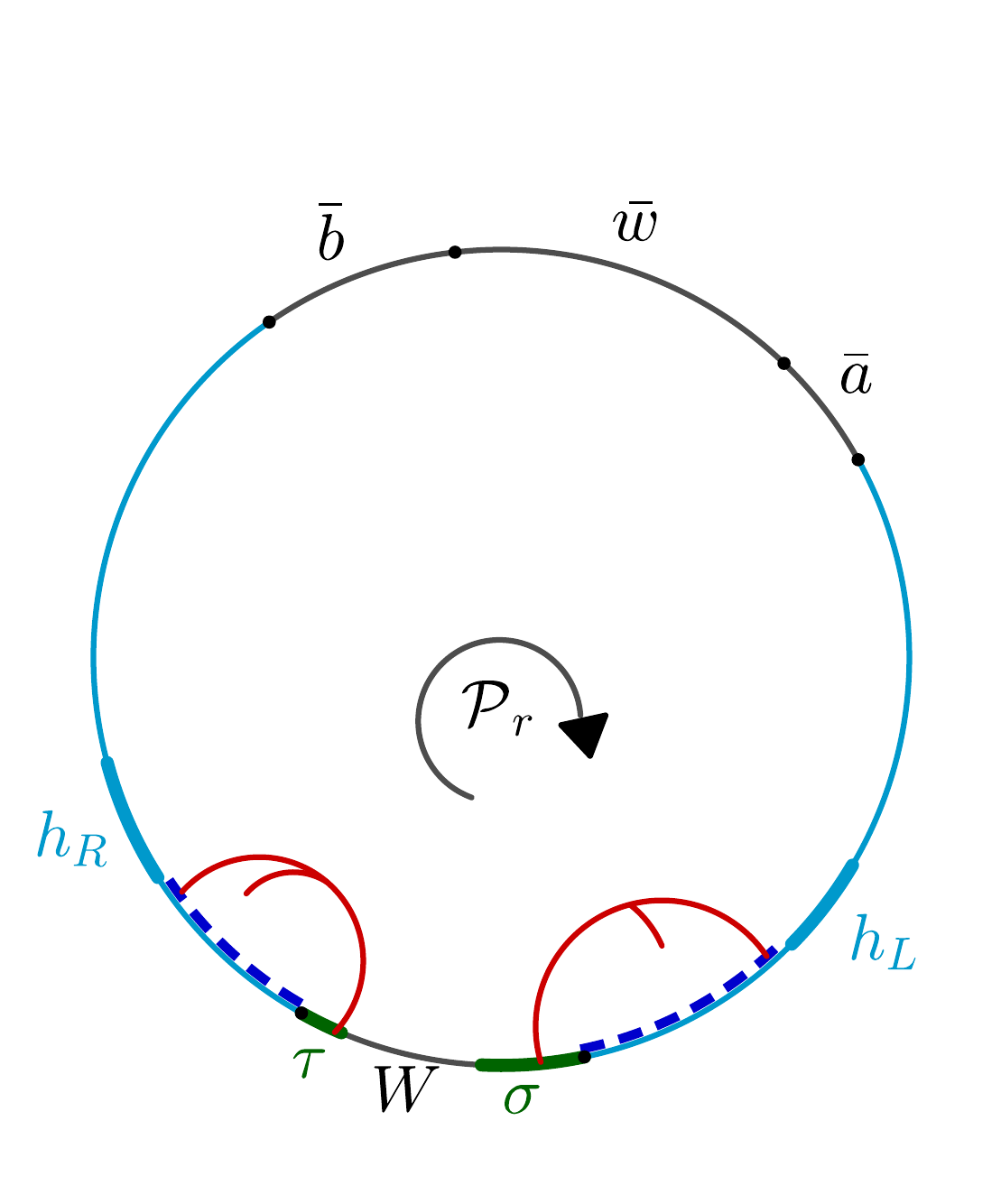}
  \caption{Right-combing}
  \label{fig:JBRightComb}
\end{subfigure}
\caption{Illustration of the different steps of the proof}
\label{fig:JoinBusting}
\end{figure}

Applying \Cref{lem:Combing} to $\mathcal{P}$ produces a prism-geodesic $u^{(l)}$ for the word $h''$ and a disk-diagram $\mathcal{P}_l$ for $u^{(l)} h_{R-D+1}\cdots h_n\overline{awb}h_1\cdots h_{L+D-1}$ which is $\overline{u}^{(l)}$--left-combed and has the same beginning and ending functions with respect to $u^{(l)}$ as $\mathcal{P}$ with respect to $u$. In particular, $u^{(l)}$ can be written $\sigma u'$, where the subword $\sigma$ is dual to graphs that only root in $h_{L+1}\cdots h_{L+D-1}$ (and no letter of $u'$ has this property) and thus has prism length at most $D\mH$. Applying \Cref{lem:Combing} to $\mathcal{P}_l$ allows us to obtain a $\overline{u}^{(r)}$--right-combed disk diagram $\mathcal{P}_r$ for $\sigma u^{(r)} h_{R-D+1}\cdots h_n\overline{awb}h_1\cdots h_{L+D-1}$, where $u^{(r)}$ is a prism-geodesic representative for $u'$ that is written in the form $W\tau$, where the graphs dual to $\tau$ only root in $h_{R-D+1}\cdots h_{R-1}$ (and no letter of $W$ has this property) and is also of prism-length at most $D\mH$. Therefore we have a prism-geodesic representative for $h''$ of the form $\sigma W \tau$, where the word $W$ contains only $u_i$ with dual graphs that have either their beginning or ending root in $awb$. As the beginning and ending roots are not affected by combing, we can conclude that $u_i\in G_{\Star(\Lambda)}$.

 Moreover, there are only finitely many possibilities for $\sigma$ and $\tau$. Indeed, consider a letter $u_j$ of $\sigma$, by \Cref{lem:single_hyperplane} we have that $u_j = s_1\cdots s_p$ where the $s_i$ are the letters on which the graph dual to $u_j$ roots in $\mathcal{P}$ and $p\leq D\mH$. Then the $s_i$ are letters in the prism-geodesic expression of the elements of $S_H$. Since $S_H$ is finite, and we fixed prism-geodesic representatives for its elements at the beginning, we have finitely many possibilities for each $u_j$. Hence, as $\sigma$ is of bounded prism-length, there are also finitely many possibilities for $\sigma$. The same argument applies to $\tau$.

    Applying the above process in all the $\mathcal{P}=\mathcal{P}^{(k)}$, we get a sequence \[ \sigma^{(k)}W^{(k)}\tau^{(k)}= (h'')^{(k)}\in H\] such that $W^{(k)} \in G_{\Star(\Lambda)}$. Because $\sigma^{(k)}$ and $\tau^{(k)}$ each range over finitely many possibilities, we pass to a subsequence $(h'')^{(k)}= \sigma W^{(k)} \tau$.  Note that the length of $(h'')^{(k)}$ grows with the the length of $(h')^{(k)}$, which itself grows with the length of $w^{(k)}$. Therefore there is $j_0\in\N$ and infinitely many $j\neq j_0$ such that the 
    \[(h'')^{(j_0)}((h'')^{(j)})^{-1}= \sigma W^{(1)}(W^{(j)})^{-1}\sigma^{-1}\]
    are distinct and non-trivial elements of $H$ conjugate by the same element of $\gp$ into $G_{\Star(\Lambda)}$. As $\Lambda$ is a subgraph of a join, by \Cref{lem:StarOfASubgraph} we have that $\Star(\Lambda)$ is a join, so this contradicts $H$ being almost join-free.
\end{proof}

\begin{rem}
    Note that if $\Lambda$ were a subgraph of a star, then $\Star(\Lambda)$ would not necessarily be a subgraph of a star. This is the key reason why we need to use almost join-free and join-busting in the statement of \Cref{thm:join-busting}, rather than the star equivalents.
\end{rem}

\section{Proof of Theorem 1.2}
\label{sec:main_proof}

This section is dedicated to proving the main theorem of this paper. Beforehand, let us recall the statement.

\begin{named}{\Cref{thm-intro:recognizing-stable}} 
        Let $\Gamma$ be a finite simple graph with no isolated vertices, $\mathcal{G}=\set{G_v}{v\in V(\Gamma)}$ be a collection of finitely generated infinite groups, and $P(\gp )$ be the prism complex of the graph product $\gp$. Let $H$ be a finitely generated subgroup of the graph product $\gp$. 
    Then the following are equivalent.
    \begin{enumerate}
        \item\label{thm-part:recognizing-stable-stable} $H$ is stable in $\gp$. 
        \item\label{thm-part:recognizing-stable-qi} The orbit of $H$ into the contact graph $\CX$ is a quasi-isometric embedding.
        \item\label{thm-part:recognizing-stable-ploxo} $H$ is almost join-free. 
    \end{enumerate}
\end{named}

We will prove $(\ref{thm-part:recognizing-stable-ploxo}) \implies (\ref{thm-part:recognizing-stable-qi})$ in \Cref{sec:almostjoinfree->qi}, $(\ref{thm-part:recognizing-stable-stable}) \implies (\ref{thm-part:recognizing-stable-ploxo})$ in \Cref{sec:stability}, and $(\ref{thm-part:recognizing-stable-qi}) \implies (\ref{thm-part:recognizing-stable-stable})$ in \Cref{sec:qi->stable}.

\subsection{Almost join-free subgroups embed quasi-isometrically into the contact graph}\label{sec:almostjoinfree->qi}

We start the proof of \Cref{thm-intro:recognizing-stable} by proving $(\ref{thm-part:recognizing-stable-ploxo}) \implies (\ref{thm-part:recognizing-stable-qi})$. Relying on \Cref{lem:Contact=star}, it is enough to show that $H$ quasi-isometrically embeds into $\gp$ for the star metric. To do so, we will use \Cref{thm:join-busting} together with the next two results.

First of all, recall that given a finitely generated subgroup $H$ of $\gp$ together with a finite generating set $S_H$, we implicitly consider all the elements in $S_H$ to be written as $S_p$--geodesic words, and we denote the maximum $S_p$--length of such elements by $\mH$. 

 \begin{prop}\label{prop:quasi-isometric embedding} If $H$ is a finitely generated almost star-free subgroup of a graph product $\gp$, then $H$ is quasi-isometrically embedded in the prism Cayley graph of $\gp$.
\end{prop}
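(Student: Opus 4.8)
The plan is to verify the two inequalities of a quasi-isometric embedding $(H,d_{S_H})\hookrightarrow(\gp,d_p)$ separately. The upper bound is immediate: since every element of $S_H$ is represented by an $S_p$--geodesic of length at most $\mH$, any $h\in H$ satisfies $|h|_p\leq\mH\,|h|_{S_H}$. All the content lies in the reverse inequality, namely producing constants $\lambda,c$ with $|h|_{S_H}\leq\lambda|h|_p+c$. This is exactly where I would feed in the almost star-free hypothesis, through the two quantitative results of \Cref{sec:star-free}, so the proposition becomes the assertion that \Cref{lem:bounded-cancellation-diameter} and \Cref{lem:bounded-non-contribution} assemble into global linear control.

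Concretely, I would fix an $S_H$--geodesic $h=h_1\cdots h_n$ (so $n=|h|_{S_H}$), let $w$ be a prism geodesic for $h$ with $m=|w|_p=|h|_p$, and take a disk diagram $\dd$ for $h\overline{w}$. The first step is to count the dual graphs meeting $w$: by \Cref{lem:rooting} each dual graph roots in at most one letter of $w$, while each letter of $w$ is the root of exactly one dual graph, so there are exactly $m$ dual graphs with a root in $w$, and by \Cref{lem:single_hyperplane} each of them also has a root in $h$. Call an index $i\in\{1,\ldots,n\}$ \emph{contributing} if some prism letter of $h_i$ is a root of one of these $m$ dual graphs. By \Cref{lem:bounded-cancellation-diameter} there is a constant $D=D(S_H)$ such that the roots in $h$ of any single dual graph span a window of $S_H$--indices of width less than $D$; hence each of the $m$ graphs meeting $w$ touches fewer than $D$ contributing indices, and there are at most $Dm$ contributing indices in total.

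The remaining step handles the non-contributing indices. A maximal run of consecutive non-contributing indices $h_i\cdots h_j$ is, by definition and witnessed by the pair $(\dd,w)$, a vanishing subword, so \Cref{lem:bounded-non-contribution} bounds its $S_H$--length by the constant $K=K(S_H)$. Since the contributing indices partition $\{1,\ldots,n\}$ into at most $Dm+1$ such runs (including the leading and trailing ones), the non-contributing indices number fewer than $K(Dm+1)$. Adding the two counts yields $n< Dm+K(Dm+1)$, that is $|h|_{S_H}< D(1+K)\,|h|_p+K$, which together with the upper bound gives the quasi-isometric embedding.

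The main obstacle is precisely the heart of the middle argument: matching each surviving prism letter of $w$ to a bounded cluster of $S_H$--letters of $h$, while simultaneously ruling out arbitrarily long stretches of $h$ that cancel internally without reaching $w$. These are exactly the two failure modes that \Cref{lem:bounded-cancellation-diameter} and \Cref{lem:bounded-non-contribution} forbid under the almost star-free hypothesis, so the real work has already been done there; what remains is the bookkeeping above. I would still double-check the degenerate cases (where $h$ or $w$ is trivial, and the leading/trailing runs), but I expect these to be routine and to be absorbed by the additive constant $K$.
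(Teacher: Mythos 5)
Your proposal is correct and follows essentially the same route as the paper: both directions use $\mH$ for the easy bound and then combine \Cref{lem:rooting}, \Cref{lem:single_hyperplane}, \Cref{lem:bounded-cancellation-diameter}, and \Cref{lem:bounded-non-contribution} in a disk diagram for $h\overline{w}$ to get linear control of $|h|_{S_H}$ by $|h|_p$. Your explicit count of contributing versus non-contributing indices is just a more carefully bookkept version of the paper's ``extremal configuration'' argument, yielding a slightly different but equally valid constant.
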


\begin{proof} The proof follows \cite[Proposition 4.4]{RAAGstable}. Recall that $|\cdot|_H$ is the $S_H$--length of an element in $H$, and $|\cdot|_p$ the prism-length of an element in $\gp$. By definition of $\mH$, for every $h\in H$ we have that $|h|_H\geq |h|_{p}/\mH$.  Now consider a prism-geodesic word $w$ for $h$, and a disk diagram for $h\overline{w}$. By \Cref{lem:rooting}, any dual graph with a root in $\overline{w}$ has its remaining roots in $h$. By \Cref{lem:bounded-cancellation-diameter} each graph dual to a letter of $\overline{w}$ has its extremal roots in $h$ at $S_H$--distance at most $D$, and \Cref{lem:bounded-non-contribution} ensures there is no vanishing subword of $h$ of $S_H$--length greater than $K$. We can see that $|h|_H$ is maximal relative to $|h|_p=|w|_p$ when none of the dual graphs rooted in both $h$ and $w$ cross each other, all of the roots in $h$ of each of these dual graphs are as far apart as possible, and the roots of different dual graphs are separated by maximally large vanishing subwords, as illustrated in \Cref{fig:Lemma6.1}.  This implies that $|h|_H\leq (K+D)|h|_p + K$.
\end{proof}

\begin{figure}[!h]
    \centering
    \includegraphics[width=0.6\linewidth]{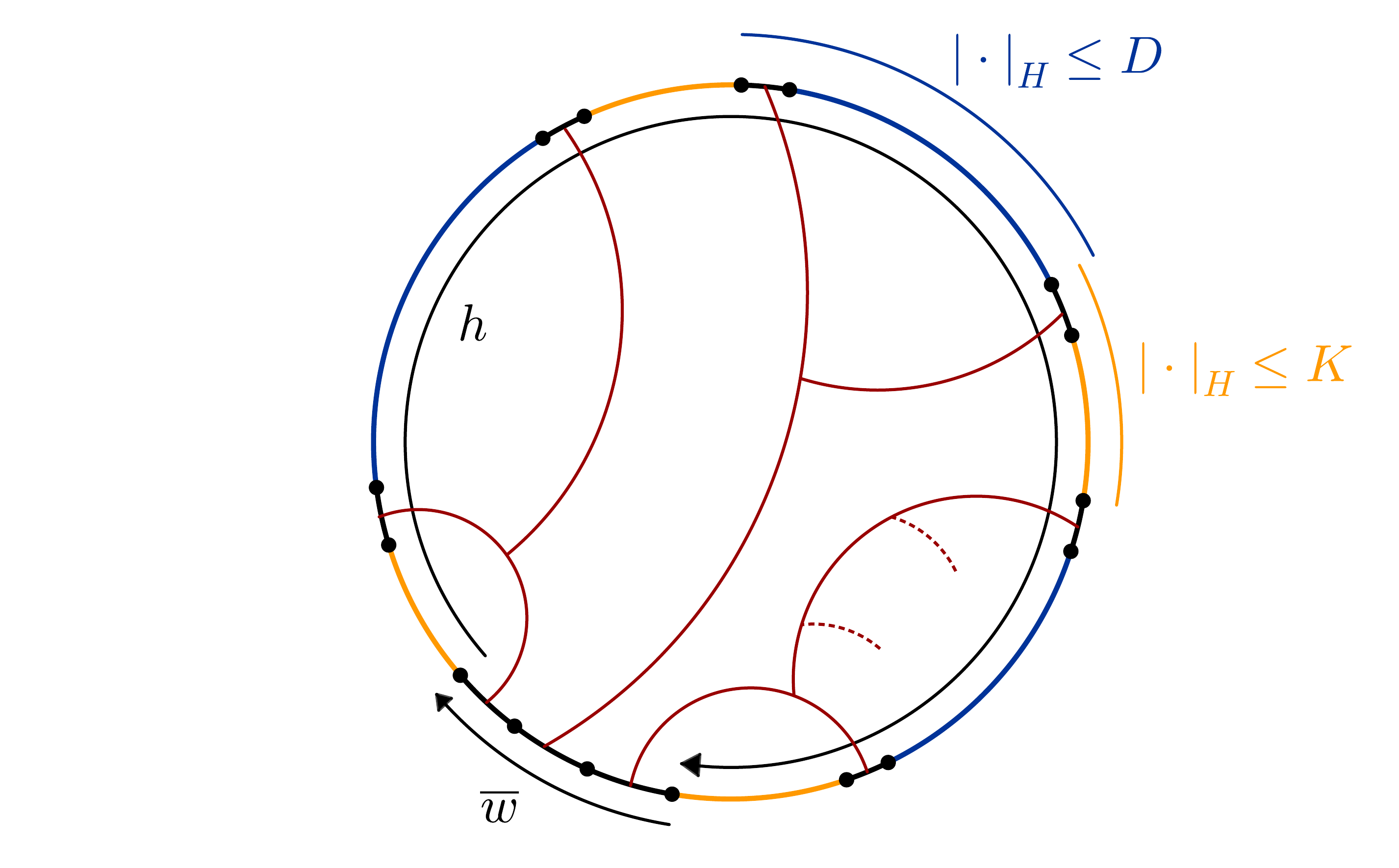}
    \caption{Hyperplane distribution in a disk diagram for $h\overline{w}$ with greatest difference between $|h|_H$ and $|h|_p$.}
    \label{fig:Lemma6.1}
\end{figure}

\begin{lem} \label{lem:StarAndPrismGeodesic} Let $g$ be a non-trivial element of a graph product $\gp$. There is a star-geodesic word $s_1\cdots s_n$ for $g$ whose letters admit prism-geodesic representatives $s_i=w_{i,1}\cdots w_{i,j(i)}$ such that their concatenation $w_{1,1}\cdots w_{n,j(n)}$ is a prism-geodesic representative for $g$.    
\end{lem}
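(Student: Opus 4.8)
The plan is to realize a star geodesic that is maximally ``economical'' at the level of prism length, and then to show that this economy forces its block decomposition to already be prism-geodesic. First I would record two preliminary facts. A subword of a prism geodesic is again a prism geodesic: this is immediate from the characterization in \Cref{lem:charac_geod}, since any shortening of the subword would exhibit a forbidden pair inside the larger word. Moreover, if a prism-geodesic word represents an element of a parabolic subgroup $G_\Lambda$, then all of its letters lie in $G_\Lambda$; this follows from \Cref{lem:finite_geodesics} together with the normal form, as every prism geodesic is a reordering of the syllables of the normal form, which only uses letters indexed by $\Lambda$. In particular, writing a star generator $t\in G_{\Star(v)}$ as a prism geodesic $p$, every letter of $p$ lies in $G_{\Star(v)}$.

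Set $n=|g|_\star$. Among all star-geodesic words $g=t_1\cdots t_n$, with each $t_i$ written as a fixed prism-geodesic representative $p_i$, I would choose one minimizing the total prism length $T=\sum_{i}|t_i|_p$; such a minimizer exists since $T\geq|g|_p$ is a nonnegative integer. I claim that for this choice the concatenation $P=p_1\cdots p_n$ is a prism geodesic, which is exactly the assertion of the lemma. Suppose not. Then by \Cref{lem:charac_geod} there are letters $x$ in some $p_i$ and $y$ in some $p_j$ lying in a common vertex group $G_t$, with every letter strictly between them contained in $G_{\Star(t)}$; since each $p_i$ is already geodesic we must have $i<j$, and choosing such a pair with the fewest $G_t$-letters strictly between them, I may assume there are none, so that all intervening letters lie in $G_{\Link(t)}$ and hence commute with both $x$ and $y$.

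The heart of the argument is then a regrouping that strictly decreases $T$ without increasing the number of blocks. Writing $p_i=A\,x\,\alpha$ and $p_j=\beta\,y\,B'$, the words $\alpha$, $p_{i+1},\dots,p_{j-1}$, $\beta$ all lie in $G_{\Link(t)}$, so $x$ commutes past them to combine with $y$ into a single element $z=xy\in G_t$. I would then replace the blocks $t_i$ and $t_j$ by $A\alpha$ and $\beta z B'$, leaving $t_{i+1},\dots,t_{j-1}$ untouched. Using the preliminary fact, $A,\alpha$ lie in $G_{\Star(v_i)}$ and $\beta,B'$ lie in $G_{\Star(v_j)}$, while $t\in\Star(v_i)\cap\Star(v_j)$ because $x\in p_i$ and $y\in p_j$; hence $A\alpha\in G_{\Star(v_i)}$ and $\beta z B'\in G_{\Star(v_j)}$ are again star generators. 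The new word still represents $g$ and has at most $n$ nontrivial blocks, but its total prism length has dropped by at least one, since the two syllables $x,y$ are replaced by the single syllable $z$ (or cancel entirely). If a block became trivial we would obtain a star word of length $<n$, contradicting $n=|g|_\star$; otherwise we contradict the minimality of $T$. Either way $P$ must be geodesic, proving the lemma.

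I expect the only delicate point to be the bookkeeping in the regrouping step, in particular verifying that $z$ is absorbed into the block containing $y$ so that $\alpha$ stays with $A$ while $z$ stays with $\beta,B'$; this is precisely what keeps each new block inside a single star subgroup and prevents the block count from growing. The reduction to a ``closest'' cancellation pair, which guarantees that the intervening letters genuinely commute with $x$ and $y$, is the other step that must be handled with care.
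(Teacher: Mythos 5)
Your proof is correct and follows essentially the same route as the paper: both use the geodesic characterization of \Cref{lem:charac_geod} to locate a pair of same-vertex-group letters in distinct star blocks, merge them using commutation with the intervening $G_{\Link(t)}$ letters, and observe that the total prism length strictly drops while the star length stays at $|g|_\star$. The only differences are cosmetic — you phrase the termination as minimality of $T$ rather than as a finite iteration, you absorb the merged letter into the later block rather than the earlier one, and you make explicit the reduction to a closest pair (which the paper leaves implicit when asserting that $w_{k',p'}$ commutes with the intervening letters).
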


\begin{proof}
    Fix a star-geodesic word $s_1\cdots s_n$ for $g$, and prism-geodesic words $w_{i,1}\cdots w_{i,j(i)}$ for each of the $s_i$. For all $i$ let $v_i\in V(\G)$ be such that $s_i\in G_{\Star(v_i)}$, and for $1\leqslant \ell\leq j(i)$ let $v_{i,\ell}\in V(\G)$ be such that $w_{i,\ell}\in G_{v_{i,\ell}}$, noting that $v_{i,\ell}\in\Star(v_i)$.
    If the prism word \[(w_{1,1}\cdots w_{1,i(1)})\cdots (w_{k,1}\cdots  w_{k,j(k)})\cdots (w_{n,1}\cdots w_{n,j(n)})\] is not geodesic, then by \Cref{lem:charac_geod} we can find $k<k'$ and $p,p'$ such that $v_{k,p}=v_{k',p'}$, and such that each letter $w_{i,\ell}$ between $w_{k,p}$ and $w_{k',p'}$ has corresponding vertex $v_{i,\ell}\in\Star(v_{k,p})$. In particular $w_{k',p'}$ commutes with all these letters. Let $x=w_{k,p}w_{k',p'}$, represented by the empty word in the case $w_{k,p} = (w_{k',p'})^{-1}$.  Using the prism geodesic criteria from \Cref{lem:charac_geod}, and the fact that $x,w_{k,p},$ and $w_{k',p'}$ are all in the same vertex group, one can verify that $g$ has star-geodesic representative
    \[s_1\cdots s_{k-1}s'_{k}s_{k+1}\cdots s_{k'-1}s'_{k'}s_{k'+1}\cdots s_n,\]
    where $s'_k\in G_{\Star(v_k)}$ and $s'_{k'}\in G_{\Star(v_{k'})}$ have respective prism-geodesic representatives
    \begin{align*}
    s'_k&=w_{k,1}\cdots w_{k,p-1}xw_{k,p+1}\cdots w_{k,j(k)}\qquad\text{and}\\
    s'_{k'}&=w_{k',1}\cdots w_{k',p'-1}w_{k',p'+1}\cdots w_{k,'j(k')}.
    \end{align*} 
    
    Thus we obtain a star geodesic for $g$ whose letters have prism geodesic representatives such that their total prism word length is strictly lower than that of the word we started with. The proof ends by applying this process finitely many times.
\end{proof}

\begin{thm} \label{thm:qie-star graph}If $H$ is a finitely generated almost join-free subgroup of a graph product $\gp$, where $\G$ has no isolated vertices, then $H$ is quasi-isometrically embedded in the star Cayley graph of $\gp$.
\end{thm}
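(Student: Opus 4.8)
The plan is to show that the inclusion $(H,|\cdot|_H)\hookrightarrow(\gp,|\cdot|_\star)$ distorts lengths by at most a bounded multiplicative factor; by homogeneity it suffices to compare $|h|_H$ and $|h|_\star$ for $h\in H$. One inequality is immediate and holds for any finitely generated subgroup: writing $h$ as an $S_H$--geodesic $a_1\cdots a_{|h|_H}$ and setting $M_\star=\max_{a\in S_H}|a|_\star$, the triangle inequality gives $|h|_\star\leq M_\star|h|_H$. All the real work is in the reverse inequality, bounding $|h|_H$ linearly in $|h|_\star$. The point is that a single star generator can have arbitrarily large prism length, so the prism quasi-isometric embedding of \Cref{prop:quasi-isometric embedding} does not by itself suffice; join-busting is precisely the tool that caps this.

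First I would check that the hypotheses feed the earlier results. Since $\G$ has no isolated vertices, almost join-free implies almost star-free (\Cref{rem:star-vs-join}), so \Cref{prop:quasi-isometric embedding} applies and provides constants $K,D$ with $|h|_H\leq(K+D)|h|_p+K$; and \Cref{thm:join-busting} applies, yielding $N=N(S_H)$ such that $H$ is $N$--join-busting.

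Next, given $h\in H$, I would invoke \Cref{lem:StarAndPrismGeodesic} to produce a star-geodesic word $s_1\cdots s_n$ for $h$ with $n=|h|_\star$, together with prism-geodesic representatives $s_i=w_{i,1}\cdots w_{i,j(i)}$ whose concatenation $w$ is itself a prism geodesic for $h$. Each $s_i$ lies in some $G_{\Star(v_i)}$, and because $v_i$ is not isolated we have $\Link(v_i)\neq\emptyset$, so $\Star(v_i)$ is the join of $\{v_i\}$ with $\Link(v_i)$; hence $G_{\Star(v_i)}$ is a join subgroup and the block $w_{i,1}\cdots w_{i,j(i)}$ is a join subword of the prism geodesic $w$. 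Join-busting then forces $j(i)=|s_i|_p\leq N$ for every $i$, and since all prism geodesics for $h$ share the same length (\Cref{lem:finite_geodesics}) we obtain $|h|_p=|w|_p=\sum_{i=1}^{n}j(i)\leq Nn=N|h|_\star$.

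Finally, combining this with the prism embedding gives $|h|_H\leq(K+D)|h|_p+K\leq(K+D)N|h|_\star+K$. Together with the easy bound $|h|_\star\leq M_\star|h|_H$, this shows $h\mapsto h$ is a quasi-isometric embedding of $(H,|\cdot|_H)$ into the star Cayley graph. The only genuine obstacle is the reverse inequality, and it is dispatched exactly by the interplay of \Cref{lem:StarAndPrismGeodesic} (which aligns a star geodesic with a prism geodesic block by block) and \Cref{thm:join-busting} (which bounds the prism length of each star letter, these being join subwords); everything else is bookkeeping.
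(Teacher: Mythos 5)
Your proposal is correct and follows essentially the same route as the paper: reduce to comparing the prism and star metrics on $H$ via \Cref{prop:quasi-isometric embedding}, then use \Cref{lem:StarAndPrismGeodesic} to align a star geodesic with a prism geodesic block by block, and \Cref{thm:join-busting} (with the observation that stars are joins when $\G$ has no isolated vertices) to bound each block's prism length by $N$, giving $|h|_p\leq N|h|_\star$. The only cosmetic difference is that the paper obtains the easy direction from the containment $S_p\subset S_\star$ rather than from the triangle inequality on $S_H$, which is immaterial.
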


\begin{proof}
    Recall that, by \Cref{prop:quasi-isometric embedding}, $H$ quasi-isometrically embeds into the prism Cayley graph of $\gp$. It therefore suffices to show that the star and prism metrics are quasi-isometric on $H$.
    
    We first observe that $S_p\subset S_\star$ implies $|\cdot |_\star\leq |\cdot|_p$. For the other bound, take $h \in H$, and let $s_1\cdots s_n$ be a star-geodesic representative obtained from \Cref{lem:StarAndPrismGeodesic}, so that $|h|_p=|s_1|_p+\cdots+|s_n|_p$. As $H$ is almost join-free, \Cref{thm:join-busting} ensures that $H$ is $N$--join-busting for some $N>0$. As $\G$ has no isolated vertices, every star is a join, so the prism length of each $s_i$ is at most $N$. We therefore have that $|h|_p\leq N|h|_\star$.
\end{proof}

Using \Cref{lem:Contact=star}, $(\ref{thm-part:recognizing-stable-ploxo}) \implies (\ref{thm-part:recognizing-stable-qi})$ is now a direct corollary of \Cref{thm:qie-star graph}.

\begin{cor} \label{thm:ploxo-impies-qi}
    Suppose $H$ is a finitely generated almost join-free subgroup of a graph product $\gp$, where $\G$ has no isolated vertices.  Then $H$ is quasi-isometrically embedded in the contact graph $\CX$ via its orbit maps.
\end{cor}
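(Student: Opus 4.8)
The plan is to obtain this as a formal consequence of the two results immediately preceding it, namely \Cref{thm:qie-star graph} and \Cref{lem:Contact=star}. First I would record that \Cref{thm:qie-star graph} already supplies the substantive half: under the present hypotheses the inclusion of $H$ into the star Cayley graph $\Cay_\star(\gp)$ is a quasi-isometric embedding, so $|h|_\star$ is comparable to $|h|_H$ for every $h\in H$. Since the vertex of $\Cay_\star(\gp)$ labeled by $h$ is exactly the point $h\cdot\id$, this inclusion is nothing other than the orbit map of $H$ acting on $\Cay_\star(\gp)$ based at the identity vertex.

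Next I would transport this to the contact graph using \Cref{lem:Contact=star}, which provides a $\gp$--equivariant quasi-isometry $f:\Cay_\star(\gp)\to\CX$. Setting $x_0=f(\id)$, equivariance yields $f(h)=f(h\cdot\id)=h\cdot f(\id)=h\cdot x_0$ for all $h\in H$, so the composite of the subgroup inclusion with $f$ is precisely the orbit map $h\mapsto h\cdot x_0$ of $H$ into $\CX$. As the composition of a quasi-isometric embedding with a quasi-isometry is again a quasi-isometric embedding, this orbit map is a quasi-isometric embedding, which is exactly the assertion.

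I do not expect a genuine obstacle here, the statement being a direct corollary; the only point deserving care is confirming that the $\gp$--equivariance in \Cref{lem:Contact=star} is precisely what identifies the composite $f\circ(\text{inclusion})$ with an orbit map into $\CX$, so that the conclusion can legitimately be phrased in terms of orbit maps rather than as an abstract quasi-isometric embedding. Finally, since the quasi-isometric embedding property of an orbit map is independent of the choice of basepoint up to adjusting the constants, the particular choice $x_0=f(\id)$ is immaterial.
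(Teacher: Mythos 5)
Your argument is exactly the paper's: the corollary is deduced by composing the quasi-isometric embedding of $H$ into the star Cayley graph from \Cref{thm:qie-star graph} with the $\gp$--equivariant quasi-isometry of \Cref{lem:Contact=star}, the equivariance identifying the composite with an orbit map into $\CX$. The proposal is correct and matches the paper's (deliberately terse) proof, merely spelling out the details.
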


\subsection{Stable subgroups are almost join-free}\label{sec:stability}

In this section we prove $(\ref{thm-part:recognizing-stable-stable}) \implies (\ref{thm-part:recognizing-stable-ploxo})$ in \Cref{thm-intro:recognizing-stable}. This also follows from the more general well-known fact that any infinite intersection with a quasi-isometrically embedded direct product of infinite groups is an obstruction to a subgroup being stable. As we have not yet found a direct reference for this, we include the below for completeness.

\begin{prop}\label{prop:InfJoinNotStable}
Let $\gp$ be a graph product of finitely generated infinite groups. If $H\leqslant \gp$ is stable, then $H$ is almost join-free.
\end{prop}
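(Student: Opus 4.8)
The plan is to prove the contrapositive: if $H$ is \emph{not} almost join-free, then $H$ is not stable. By hypothesis there is a join subgroup $G_{\Gamma'} = A \times B$, with $A = G_{\Gamma_1}$ and $B = G_{\Gamma_2}$ both infinite (the join parts are nonempty and the vertex groups are infinite), together with an element $g \in \gp$ for which $K := H \cap g G_{\Gamma'} g^{-1}$ is infinite. Since conjugation preserves stability (\Cref{lem:ConjStableIsStable}), I may replace $H$ by $g^{-1} H g$ and assume from the start that $K = H \cap (A\times B)$ is infinite. The strategy is then to exploit the product (flat-like) geometry of $A \times B$ to build, for a \emph{fixed} pair of quasi-geodesic constants, pairs of quasi-geodesics sharing their endpoints in $K \subseteq H$ whose mutual Hausdorff distance grows without bound, directly contradicting the fellow-traveling clause of \Cref{def:stable}.

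First I would record the coarse geometry of the join subgroup. Being parabolic, $A\times B$ is a retract of $\gp$ (kill every vertex group outside $\Gamma'$), hence undistorted; moreover, with respect to the natural generating set its word metric is the $\ell^1$-product, $|(a,b)| \asymp |a|_A + |b|_B$. Consequently any concatenation of boundedly many coordinate geodesics in $A\times B$ whose total length is within a fixed multiplicative factor of the distance between its endpoints is a \emph{uniform} quasi-geodesic in $\gp$, with constants depending only on the distortion of $A\times B$. This is the technical backbone that lets me transport explicit paths from the product into the ambient group.

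Next comes the construction. Since $K$ is infinite and balls are finite, I can pick $k_n = (a_n,b_n) \in K$ with $\max(|a_n|_A,|b_n|_B)\to\infty$; after passing to a subsequence assume $|b_n|_B\to\infty$. If $|a_n|_A\to\infty$ as well, I compare the two monotone ``L-shaped'' geodesics from $\id$ to $k_n$ (all of $A$ then all of $B$, versus all of $B$ then all of $A$); their corners $(a_n,\id)$ and $(\id,b_n)$ lie at distance at least $\min(|a_n|_A,|b_n|_B)\to\infty$ apart. If instead $|a_n|_A$ stays bounded, I compare the direct geodesic to $k_n$ with a detour $\id \to c_n \to c_n k_n \to k_n$ that first travels out along $A$ to an element $c_n \in A$ with $|c_n|_A \approx |b_n|_B/4$; such $c_n$ exist because an infinite finitely generated group has geodesics of unbounded length and every intermediate length is realized by a geodesic prefix. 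The detour reaches $A$-coordinate $c_n$ while the direct path keeps its $A$-coordinate bounded, so the two are $\gtrsim |b_n|_B$ apart, and a length computation shows the detour remains a uniform quasi-geodesic. In both cases the two paths are $(\lambda,c)$-quasi-geodesics for constants independent of $n$, share the endpoints $\id, k_n \in H$, yet are arbitrarily far apart, so no fellow-traveling constant $M$ can exist and $H$ fails to be stable.

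The main obstacle is that the vertex groups, and hence $A$ and $B$, may be infinite torsion groups, so I cannot rely on an infinite cyclic subgroup to produce long straight detours or corners. The point to get right is therefore that the relevant separation is driven purely by word length in a single factor: infinitude of that factor already guarantees elements of prescribed large length, and geodesic prefixes give fine control of that length, which is exactly what keeps the detour both long (so the two paths separate) and efficient (so it remains a uniform quasi-geodesic). A secondary point to verify carefully is the uniformity of the quasi-geodesic constants across the whole sequence, which reduces to the fixed distortion of the parabolic subgroup $A\times B$ in $\gp$.
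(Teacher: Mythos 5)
Your proof is correct and takes essentially the same approach as the paper: reduce by conjugation to an infinite intersection with a join subgroup, use that the product $G_{\G_1}\times G_{\G_2}$ is undistorted in $\gp$, and exhibit pairs of uniformly-constant quasi-geodesics with common endpoints in $H$ at unbounded Hausdorff distance. The only difference is that you construct the diverging L-shaped paths and detours by hand, whereas the paper delegates exactly this construction to \cite[Lemma 6.2]{DT}.
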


\begin{proof}
Suppose $H$ is not almost join-free, so there exists infinite $A\subset H$ such that $A$ is conjugate into a join subgroup $G_{\Lambda}$.  As stability is preserved under conjugation (\Cref{lem:ConjStableIsStable}), we may assume $A$ itself is contained in $G_{\Lambda}$.

Consider a standard generating set $S$ for $\gp$ (i.e. the generating set obtained by taking a union of finite generating sets for each vertex group in $\mathcal{G}$). By the normal form for graph products \cite{greennormalform}, under this generating set every parabolic subgroup can be considered as being isometrically embedded in $\gp$. As ${\Lambda}$ is a join of two subgraphs $\G_1$ and $\G_2$ of $\Gamma$, recall that $G_{\Lambda}$ is isomorphic to $G_{\G_1}\times G_{\G_2}$, and each one of $G_{\Lambda}, G_{\G_1}$, and $G_{\G_2}$ are isometrically embedded in $\gp$ under the generating set $S$.

As $A=H\cap G_{\Lambda}$ is infinite, the projection of $A$ to at least one of $G_{\G_1}$ and $G_{\G_2}$ must also be infinite. Call these projections $X$ and $Y$ respectively, and without loss of generality assume that $X$ is infinite. For every $M>0$ we can therefore pick $x_1,x_2\in X$ and $y_1,y_2\in Y$ such that $d_{S}(x_1,x_2)>M$ and $x_1y_1,x_2y_2\in A$. It follows from \cite[Lemma 6.2]{DT} that there exist (3,0)-quasi-geodesics in $G_{\Lambda}$, and therefore in $\gp$, between $x_1y_1$ and $x_2y_2$ such that at least one of these is not in the $M$-neighborhood of the other. This means that $H$ cannot be stable.
\end{proof}

\subsection{Quasi-isometrically embedded subgroups are stable}\label{sec:qi->stable}

The following proves $(\ref{thm-part:recognizing-stable-qi}) \implies (\ref{thm-part:recognizing-stable-stable})$ in \Cref{thm-intro:recognizing-stable}. This follows as a simple consequence of \Cref{thm:valiunas}, and the work of Abbott and Manning in \cite{abbott2022acylindrically}.

\begin{prop} \label{prop:qi-ilplies-stable}
    Let $H$ be a finitely generated subgroup of a finitely generated graph product $\gp$. If the orbit maps of $H$ into the contact graph $\CX$ are quasi-isometric embeddings, then $H$ is stable.
\end{prop}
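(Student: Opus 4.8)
The plan is to deduce stability directly from the acylindricity of the action of $\gp$ on the contact graph, by invoking the general result of Abbott and Manning \cite{abbott2022acylindrically} that links quasi-isometrically embedded orbits to stability. So the proof is essentially a matter of checking that we are in the setting their theorem requires and then citing it.

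First I would verify that the hypotheses of \Cref{thm:valiunas} hold. Since $\Gamma$ is a finite simple graph it is in particular of bounded degree, and the vertex groups are non-trivial; hence $\CX$ is a quasi-tree, and therefore Gromov-hyperbolic, and the action of $\gp$ on $\CX$ is acylindrical. Together with the standing assumption in \Cref{thm-intro:recognizing-stable} that $\gp$ is finitely generated, this places us exactly in the situation of a finitely generated group acting acylindrically on a hyperbolic space.

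The key input is then the theorem of Abbott-Manning asserting that, for a group $G$ acting acylindrically on a hyperbolic space $X$, any finitely generated subgroup $H\le G$ whose orbit map $H\to X$ is a quasi-isometric embedding is stable in $G$. Applying this with $G=\gp$, $X=\CX$, and the given subgroup $H$ yields the conclusion immediately. I would note that the property ``some orbit map is a quasi-isometric embedding'' does not depend on the chosen basepoint or on the finite generating set of $H$, so condition (\ref{thm-part:recognizing-stable-qi}) is precisely the hypothesis needed to run this argument.

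The only point requiring genuine care---and the main, if modest, obstacle---is to confirm that the notion of stability established by Abbott-Manning coincides with the two-part definition in \Cref{def:stable}, and that their result applies with no extra assumptions beyond acylindricity of the action and finite generation of the subgroup (in particular that neither coboundedness nor non-elementarity of the $\gp$-action on $\CX$ is needed). Once these hypotheses are matched up, the proposition follows with no further computation.
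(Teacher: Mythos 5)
Your proposal is correct and follows essentially the same route as the paper: both cite \Cref{thm:valiunas} for hyperbolicity and acylindricity of the action on $\CX$, and then invoke the Abbott--Manning theorem on A/QI triples to conclude stability. The only (cosmetic) difference is that the paper first disposes of the case where $H$ is finite before forming the A/QI triple, a triviality since finite subgroups are automatically stable.
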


\begin{proof}
    If $H$ is finite, then it is automatically stable, so assume that $H$ is infinite. Recall from \Cref{thm:valiunas} that $\CX$ is hyperbolic, and that the action of $\gp$ on $\CX$ is acylindrical. Because the orbit of $H$ into $\CX$ is a quasi-isometric embedding by assumption, we can conclude that $(\gp, \CX, H)$ is an A/QI triple, as defined in \cite[Definition 1.2]{abbott2022acylindrically}. We also have that $\gp$ is finitely generated by assumption. Thus, \cite[Theorem 1.5]{abbott2022acylindrically} implies that $H$ is stable.
\end{proof}

\section{Purely loxodromic subgroups}
\label{sec:purely-loxodromic}

In \cite[Theorem 1.1]{RAAGstable}, it is shown that the stable subgroups of right-angled Artin groups are exactly the finitely generated purely loxodromic subgroups. We recall that an isometry $g$ of a hyperbolic space $X$ is called \textit{loxodromic} if $\langle g\rangle$ has unbounded orbits, and fixes exactly two points on the Gromov boundary $\partial X$. If $G$ is a group acting by isometries on $X$, a subgroup $H\leqslant G$ is \textit{purely loxodromic} if every infinite order $h\in H$ acts loxodromically on $X$.

We can show that every stable subgroup of a graph product is purely loxodromic via the following standard observation, combined with \Cref{thm-intro:recognizing-stable} and the fact that the action of $\gp$ on $\CX$ is acylindrical (see \Cref{thm:valiunas}). Note that in an acylindrical action on a hyperbolic space, every element with unbounded orbits is loxodromic \cite[Lemma 2.2] {Bowditch2008}.

\begin{lem}
    \label{lem:qie-implies-pl}
    Let $G$ be a group acting acylindrically on a hyperbolic space $X$. Let $H$ be a finitely generated subgroup of $G$. If the orbit of $H$ into $X$ is a quasi-isometric embedding, then $H$ is purely loxodromic with respect to that action.
\end{lem}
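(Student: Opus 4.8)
The plan is to reduce the statement to the claim that every infinite-order element of $H$ has unbounded orbit in $X$. Indeed, the cited result of Bowditch (\cite[Lemma 2.2]{Bowditch2008}) says that in an acylindrical action on a hyperbolic space every isometry is either elliptic (bounded orbits) or loxodromic, so there is nothing to rule out between these two possibilities. Thus it suffices to fix an infinite-order element $h\in H$ and show that the orbit $\{h^n\cdot x_0 \mid n\in\Z\}$ is unbounded, where $x_0\in X$ is the basepoint used for the orbit map.

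First I would fix a finite generating set $S_H$ of $H$ and let $(\lambda,c)$ be constants for which the orbit map $g\mapsto g\cdot x_0$ is a $(\lambda,c)$--quasi-isometric embedding of $(H,d_{S_H})$ into $X$. The lower bound in the definition of a quasi-isometric embedding gives, for every $n\in\Z$,
\[
d_X(x_0,\, h^n\cdot x_0) \;\geq\; \tfrac{1}{\lambda}\,|h^n|_{S_H} - c,
\]
so it is enough to show that $|h^n|_{S_H}\to\infty$ as $|n|\to\infty$.

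The one substantive point is that the word lengths of the powers of an infinite-order element tend to infinity. Since $H$ is finitely generated, every ball of finite radius in $(H,d_{S_H})$ contains only finitely many elements; and since $h$ has infinite order, the powers $\{h^n \mid n\in\Z\}$ are pairwise distinct. Hence for each $R>0$ only finitely many powers $h^n$ can satisfy $|h^n|_{S_H}\leq R$, which is exactly the assertion that $|h^n|_{S_H}\to\infty$. Feeding this into the displayed inequality shows that the orbit of $\langle h\rangle$ is unbounded, and the lemma then follows from Bowditch's dichotomy.

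I do not anticipate a genuine obstacle: the argument is entirely formal once one invokes local finiteness of the Cayley graph together with the elliptic/loxodromic dichotomy for acylindrical actions. The only subtlety worth flagging is that unbounded orbit must be promoted to \emph{loxodromicity} through acylindricity (to exclude parabolic-type isometries), rather than directly — but this conversion is precisely the content of the cited lemma.
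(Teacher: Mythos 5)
Your argument is correct and is essentially the paper's own proof, just written out in more detail: an infinite-order element has $|h^n|_{S_H}\to\infty$ by local finiteness, the quasi-isometric embedding then forces unbounded orbits in $X$, and acylindricity (via the cited lemma of Bowditch) upgrades this to loxodromicity. No gaps.
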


\begin{proof}
    Let $h\in H$ be an infinite order element, so $\langle h\rangle$ has infinite diameter in $H$. By assumption, the orbit of $H$ into $X$ is a quasi-isometric embedding, so $\langle h\rangle$ has unbounded orbits in $X$. As the action of $G$ on $X$ is acylindrical, $h$ must be loxodromic.
\end{proof}

On the other hand, if a subgroup of a graph product is purely loxodromic, then it does not in general have to be stable. If $\G$ is connected, and some vertex group $G_{v}$ is an infinite finitely generated torsion group, then $G_{v}$ is (vacuously) purely loxodromic. On the other hand, $G_v$ has infinite intersection with a join subgroup, so is not almost join-free, and therefore is not stable by \Cref{prop:InfJoinNotStable}.

However, if we add the assumption that $\gp$ cannot contain infinite torsion subgroups (for example if $\gp$ is virtually torsion-free, as in the right-angled Artin group case), we are able to obtain equivalence. The following proposition (which will help us establish the equivalence) is known, see for example \cite[Proposition 2.10]{Fioravanti2024}, however we include a short proof here for completeness.

\begin{prop}
\label{prop:join-implies-elliptic}
    Let $\gp$ be a graph product, and $P(\gp)$ the prism complex of the graph product $\gp$. Then every element in $\gp$ that is conjugate into a join subgroup has bounded orbits in the action of $\gp$ on the contact graph $\CX$.
\end{prop}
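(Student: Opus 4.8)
The plan is to reduce the statement, via \Cref{lem:Contact=star}, to showing that the star length $|g^n|_\star$ stays bounded as $n$ ranges over $\Z$. Since the map $f\colon \Cay_\star(\gp)\to\CX$ from that lemma is a $\gp$-equivariant quasi-isometry, a subset of $\gp$ has bounded orbit in $\CX$ precisely when it has bounded star length, so it suffices to prove $\sup_n |g^n|_\star<\infty$. Because both having a bounded orbit and the star length (up to an additive constant under conjugation) behave well under conjugation, I would first reduce to the case where $g$ itself lies in a join subgroup $G_\Lambda$: if $g=aha^{-1}$ with $h\in G_\Lambda$, then $g^n=ah^na^{-1}$, and since $S_\star$ is symmetric we get $|g^n|_\star\le 2|a|_\star+|h^n|_\star$, so it is enough to bound $|h^n|_\star$.

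The heart of the argument is the claim that \emph{every} element of a join subgroup has star length at most $2$. Suppose $\Lambda$ is the join of the nonempty subgraphs $\G_1$ and $\G_2$, so that $G_\Lambda=G_{\G_1}\times G_{\G_2}$, and write $h=h_1h_2$ with $h_1\in G_{\G_1}$ and $h_2\in G_{\G_2}$. Fixing any vertex $w\in\G_2$, the join condition forces $\{u,w\}\in E(\G)$ for every $u\in\G_1$, hence $\G_1\subseteq\Link(w)\subseteq\Star(w)$ and therefore $G_{\G_1}\le G_{\Star(w)}$, giving $|h_1|_\star\le 1$. Symmetrically, fixing $v\in\G_1$ gives $\G_2\subseteq\Link(v)\subseteq\Star(v)$, so $G_{\G_2}\le G_{\Star(v)}$ and $|h_2|_\star\le 1$. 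Thus $|h|_\star\le|h_1|_\star+|h_2|_\star\le 2$.

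Applying this to the powers $h^n$, which all remain in $G_\Lambda$, yields $|h^n|_\star\le 2$ for every $n$, and combined with the conjugation estimate above gives $\sup_n|g^n|_\star\le 2|a|_\star+2<\infty$. Pushing this bound through the $\gp$-equivariant quasi-isometry of \Cref{lem:Contact=star} shows the orbit $\{g^n\cdot x\}$ in $\CX$ has finite diameter, which is the conclusion. I do not anticipate a genuine obstacle: the only points requiring care are recording that $f$ is truly $\gp$-equivariant, so that orbit diameter transfers between $\Cay_\star(\gp)$ and $\CX$, and correctly unpacking the direct-product decomposition $G_\Lambda=G_{\G_1}\times G_{\G_2}$ of the join subgroup; the rest is a short length computation.
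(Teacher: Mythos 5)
Your proposal is correct and follows essentially the same route as the paper: both arguments observe that for a join $\Lambda=\G_1\ast\G_2$ one has $\G_1\subseteq\Star(v_2)$ and $\G_2\subseteq\Star(v_1)$, so every element of $G_\Lambda$ has star length at most $2$, and then transfer the resulting bounded orbit to $\CX$ via the equivariant quasi-isometry of \Cref{lem:Contact=star}. Your explicit conjugation estimate $|g^n|_\star\le 2|a|_\star+|h^n|_\star$ just spells out what the paper states in one line.
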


\begin{proof}
    Let $G_{\Lambda}\leqslant \gp$ be a join subgroup, and let $\G_1$ and $\G_2$ be subgraphs of $\G$ such that $G_{\Lambda}=G_{\G_1}\times G_{\G_2}$. Let $v_1\in \G_1$ and $v_2\in \G_2$. Then $\G_1\subset \Star(v_2)$ and $\G_2\subset \Star(v_1)$. Therefore every $h\in G_{\Lambda}$ can be written as $h=h_1h_2$, with $h_1\in G_{\Star(v_2)}$ and $h_2\in G_{\Star(v_1)}$.
    
    This implies that $G_{\Lambda}$ lies in a ball of radius 2 around the identity in the star Cayley graph of $\gp$. As a consequence, any $g\in G_{\Lambda}$ must have bounded orbits in the star Cayley graph, and so any conjugate must also have bounded orbits. Recall that the star Cayley graph is $\gp$--equivariantly quasi-isometric to $\CX$, which concludes the proof.
\end{proof}

\begin{cor}
    \label{cor:pl-implies-ajf}
    Let $\gp$ be a graph product with no infinite torsion subgroups. If $H\leqslant \gp$ is purely loxodromic with respect to the action on $\CX$, then $H$ is almost join-free.
\end{cor}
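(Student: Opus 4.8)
The plan is to prove the contrapositive: assuming $H$ is not almost join-free, I will exhibit an infinite-order element of $H$ that acts with bounded orbits on $\CX$, hence is not loxodromic, contradicting that $H$ is purely loxodromic.

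First I would unwind the definition of almost join-free. If $H$ fails to be almost join-free, then by definition there is a join subgroup $G_{\Lambda}\leqslant\gp$ and an element $g\in\gp$ such that the intersection $A:=H\cap gG_{\Lambda}g^{-1}$ is infinite. The key structural observation is that $A$ is itself a \emph{subgroup} of $\gp$, being the intersection of the subgroup $H$ with the conjugate subgroup $gG_{\Lambda}g^{-1}$.

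Next I would invoke the hypothesis that $\gp$ has no infinite torsion subgroups. Since $A$ is an infinite subgroup of $\gp$, it cannot be a torsion group, as otherwise it would itself be an infinite torsion subgroup. Hence $A$ contains an element $h$ of infinite order. By construction $h\in H$ and $h\in gG_{\Lambda}g^{-1}$, so $h$ is conjugate into the join subgroup $G_{\Lambda}$.

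Finally I would apply \Cref{prop:join-implies-elliptic}, which guarantees that every element conjugate into a join subgroup has bounded orbits in the action of $\gp$ on $\CX$. Thus $h$ has bounded orbits and is therefore not loxodromic. Since $h\in H$ is an infinite-order non-loxodromic element, $H$ is not purely loxodromic, completing the contrapositive. The argument is short, and I expect no serious obstacle: the only delicate point is the passage from ``infinite intersection'' to ``infinite-order element,'' which is precisely where the no-infinite-torsion hypothesis is used. Without it the infinite intersection could consist entirely of torsion elements, exactly as in the example preceding the corollary, where an infinite finitely generated torsion vertex group is vacuously purely loxodromic yet fails to be almost join-free.
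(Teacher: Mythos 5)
Your argument is correct and is essentially the paper's own proof: both use the no-infinite-torsion hypothesis to extract an infinite-order element from the infinite intersection (which, as you rightly note, is a subgroup) and then apply \Cref{prop:join-implies-elliptic} to contradict pure loxodromicity. The only difference is presentational (contrapositive versus contradiction), and your explicit remark that the intersection is a subgroup is a small but welcome clarification of a step the paper leaves implicit.
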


\begin{proof}
    Suppose $H$ is purely loxodromic but has infinite intersection with some conjugate of a join subgroup. By our assumption on $\gp$, this intersection contains an element of infinite order, which as $H$ is purely loxodromic must have unbounded orbits in the action on $\CX$. This contradicts \Cref{prop:join-implies-elliptic}.
\end{proof}

As a result, by combining \Cref{thm-intro:recognizing-stable} with \Cref{lem:qie-implies-pl} and \Cref{cor:pl-implies-ajf}, in this case we obtain a additional condition equivalent to stability.

\begin{cor}\label{thm:main-torsion-free}
    Let $\Gamma$ be a finite simple graph with no isolated vertices, $\mathcal{G}=\set{G_v}{v\in V(\Gamma)}$ be a collection of infinite finitely generated groups with no infinite torsion subgroups, and $P(\gp )$ the prism complex of the graph product $\gp$. A finitely generated subgroup $H$ of $\gp$ is stable if and only if it is purely loxodromic with respect to the action on the contact graph $\CX$.
\end{cor}

\section{Infinite index Morse subgroups}
\label{sec:morse}

A subgroup $H$ of a finitely generated group $G$ is said to be \textit{Morse} (sometimes called \textit{strongly quasiconvex}) if for any constants $\lambda \geqslant 1, c\geqslant 0$, there is a constant $M\geqslant 0$ (depending on $\lambda, c$) such that every $(\lambda, c)$–quasi-geodesic in $G$ with endpoints in $H$ is contained in the $M$--neighborhood of $H$. Morse subgroups can be related to stable subgroups via the following result.

\begin{prop}
    \label{prop:morse-hyperbolic}
    \emph{\cite[Theorem 4.8]{Tran2019}\cite{Genevois2019}}
    Let $G$ be a finitely generated group. A subgroup $H$ of $G$ is stable if and only if it is Morse and hyperbolic.
\end{prop}

In \cite{Tran2019}, it is shown that the stable subgroups of right-angled Artin groups with connected defining graphs are exactly the infinite index Morse subgroups. This was also shown for mapping class groups in \cite{HKim}. Here we extend this characterization to the stable subgroups of graph products of infinite groups, where the defining graph is connected. This connected assumption is necessary, as the factors in a free product will always be Morse and infinite index, but not necessarily hyperbolic.

We begin with the following easy observation.

\begin{lem}
    \label{lem:stable-implies-morse}
    Let $\gp$ be a graph product of finitely generated infinite groups, where $\G$ has no isolated vertices. If $H\leqslant \gp$ is stable, then $H$ is infinite index and Morse.
\end{lem}

\begin{proof}
    It follows from \Cref{prop:morse-hyperbolic} that $H$ is Morse. As $\G$ has no isolated vertices, $\gp$ contains a (quasi-isometrically embedded) direct product of infinite groups, so is not hyperbolic, so no finite index subgroup is hyperbolic. Again using \Cref{prop:morse-hyperbolic} (or \cite[Lemma 3.3]{DT}), this means that no finite index subgroup of $\gp$ is stable, which gives us the conclusion.
\end{proof}

The proof of the reverse direction of \Cref{lem:stable-implies-morse} (in the case that $\G$ is connected) uses the fact that Morse subgroups have finite height. A subgroup $H$ of a group $G$ is said to have \textit{finite height} in $G$ if there exists $k$ such that, for any $g_1,\dots,g_{k}\in G$ where the cosets $g_1H,\ldots, g_{k}H$ are distinct, the intersection of the conjugates $g_1Hg_1^{-1},\ldots,g_{k}Hg_{k}^{-1}$ is finite.

\begin{thm}
    \label{thm:finite-height}
    \emph{\cite[Theorem 4.15]{Tran2019}} If $H$ is a Morse subgroup of a finitely generated group $G$, then $H$ has finite height in $G$.
\end{thm}

The following results are adapted from the statements for right-angled Artin groups in \cite{Tran2019}. The next proposition is well known to experts, the proof is included for completeness.

\begin{prop}
\label{prop:morse-in-product}
    If $G_1$ and $G_2$ are finitely generated infinite groups, and $H\leqslant G_1\times G_2$ is a Morse subgroup, then either $H$ is finite, or $H$ has finite index in $G_1\times G_2$.
\end{prop}

\begin{proof}
    There are two cases to consider. The first case is that at least one of $H\cap G_1$ or $H\cap G_2$ is infinite. Suppose without loss of generality that $H\cap G_2$ is infinite. We want to show that $H$ has finite index in $G_1\times G_2$. For this purpose, it is enough to show that both $[G_1:H\cap G_1]$ and $[G_2:H\cap G_2]$ are finite.
    
    Suppose for the purpose of contradiction that $H\cap G_1$ does not have finite index in $G_1$. This means that for any $k\geqslant 1$, we can find $a_1,\ldots,a_k\in G_1$ such that $a_1(H\cap G_1),\ldots,a_k(H\cap G_1)$ are distinct. This implies that $a_1H,\ldots,a_kH$ are also distinct. Each $a_i$ commutes with the elements of $G_2$, so $H\cap G_2\leqslant a_i Ha_i^{-1}$, and therefore $H\cap G_2 \leq \bigcap_{i=1}^k a_iHa_i^{-1}$. As $H\cap G_2$ is infinite, $H$ does not have finite height, contradicting \Cref{thm:finite-height}.

    We therefore have that $H\cap G_1$ has finite index in $G_1$. As $G_1$ is infinite, this implies that $H\cap G_1$ is also infinite, so by the same reasoning as in the previous paragraph $H\cap G_2$ also has finite index in $G_2$. We can therefore conclude that $H$ has finite index in $G_1\times G_2$.

    The second case is that both $H\cap G_1$ and $H\cap G_2$ are finite. In this case we will show that $H$ is finite, so suppose for the purpose of contradiction that $H$ is infinite. We must therefore have that the projection of $H$ to at least one of $G_1$ or $G_2$ is infinite. Suppose without loss of generality that the projection to $G_1$ is infinite.
    
    Let $S_1$ and $S_2$ be finite generating sets for $G_1$ and $G_2$ respectively. As $H\cap G_1$ is finite, for any $b\in G_2$ we have that the set $G_1\times\{b\}$ contains only finitely many elements of $H$. It follows that, for any $M\geqslant 1$, the set $G_1\times B_{S_2}(M)$ contains only finitely many elements of $H$, where $B_{S_2}(M)$ is the ball of radius $M$ around $\id_{G_2}$ in $G_2$ with respect to $S_2$. Now let $h_1h_2\in H$ such that $h_1\in G_1$ and $h_2\in G_2$, and let $\gamma_1$ be an $S_1$--geodesic from $\id_{G_1}$ to $h_1$, and $\gamma_2$ be an $S_2$--geodesic from $\id_{G_2}$ to $h_2$. The concatenation of $\gamma_1\times \{\id_{G_2}\}$ with $\{h_1\}\times \gamma_2$ is therefore a geodesic in $G_1\times G_2$ from $\id_{G_1\times G_2}$ to $h_1h_2$, with both of these points lying in $H$.

    As the projection of $H$ to $G_1$ is infinite, we can choose $h_1$ to have arbitrarily large $S_1$--length. As $G_1\times B_{S_2}(M)$ contains only finitely many elements of $H$, say $m$ many, we choose $h_1$ such that $|h_1|_{S_1}\geqslant 3M(m+1)$. Therefore some point in $\gamma_1\times \{\id_{G_2}\}$ has distance at least $M$ from any point of $H$, which contradicts $H$ being Morse. Therefore $H$ must be finite.
\end{proof}

\begin{lem}
    \label{lem:intersections-stars}
    Let $\gp$ be a graph product of infinite groups, with $\G$ connected. For any $v\in V(\G)$, and any $g_1,g_2\in \gp$, there is a finite sequence of conjugates of star subgroups $g_1 G_{\Star(v)}g_1^{-1}=Q_0,Q_1,\ldots, Q_m=g_2G_{\Star(v)}g_2^{-1}$ such that $Q_{i-1}\cap Q_i$ is infinite for each $i\in\{1,\ldots,m\}$.
\end{lem}

\begin{proof}
    This follows the proof of Lemma 8.17 in \cite{Tran2019}, with the induction done on $n=|g_1^{-1}g_2|_{\star}$.
\end{proof}

\begin{cor}
    \label{lem:intersections-joins}
    Let $\gp$ be a graph product of infinite groups, with $\G$ connected. For any join subgraph $\Lambda$ of $\G$, and any $g_1,g_2\in \gp$, there is a finite sequence of conjugates of join subgroups $g_1 G_{\Lambda}g_1^{-1}=Q_0,Q_1,\ldots, Q_m=g_2G_{\Lambda}g_2^{-1}$ such that $Q_{i-1}\cap Q_i$ is infinite for each $i\in\{1,\ldots,m\}$.
\end{cor}

\begin{proof}
    Take any $v\in V(\Lambda)$, then $G_{\Star(v)}\cap G_{\Lambda}$ is infinite. We can therefore take $Q_1=g_1G_{\Star(v)}g_1^{-1}$ and $Q_{m-1}=g_2G_{\Star(v)}g_2^{-1}$, and as star subgroups are join subgroups, the conclusion follows from \Cref{lem:intersections-stars}. 
\end{proof}

\begin{cor}
    \label{cor:morse-implies-joinfree}
    Let $\gp$ be a graph product of finitely generated infinite groups, with $\G$ connected. If $H\leqslant \gp$ is infinite index and Morse, then $H$ is almost join-free.
\end{cor}

\begin{proof}
    This follows the proof of Proposition 8.18 in \cite{Tran2019}, using \Cref{prop:morse-in-product} in place of \cite[Lemma 8.16]{Tran2019}, \Cref{lem:intersections-joins} in place of \cite[Lemma 8.17]{Tran2019}, join subgroups in place of star subgroups, finite intersections in place of trivial ones, and infinite intersections in place of non-trivial ones.
\end{proof}

We finally combine \Cref{thm-intro:recognizing-stable}, \Cref{lem:stable-implies-morse}, and \Cref{cor:morse-implies-joinfree} to obtain the following.

\begin{cor}
    Let $\gp$ be a graph product of finitely generated infinite groups, with $\G$ a connected graph with at least two vertices. A subgroup $H$ of $\gp$ is stable if and only if it is infinite index and Morse.
\end{cor}

\bibliographystyle{amsalpha}
\bibliography{bib}

\end{document}